\renewcommand{\Re}{\mathop{\rm Re}}
\newcommand{\sgn}{\mathop{\rm sgn}}
\renewcommand{\bar}{\overline}
\newcommand{\bC}{\ensuremath{\mathbb{C}}}
\newcommand{\bD}{\ensuremath{\mathbb{D}}}
\newcommand{\bE}{\ensuremath{\mathbb{E}}}
\newcommand{\bN}{\ensuremath{\mathbb{N}}}
\newcommand{\bP}{\ensuremath{\mathbb{P}}}
\newcommand{\bR}{\ensuremath{\mathbb{R}}}
\newcommand{\bS}{\ensuremath{\mathbb{S}}}
\newcommand{\bZ}{\ensuremath{\mathbb{Z}}}
\newcommand{\cA}{\ensuremath{\mathcal{A}}}
\newcommand{\cB}{\ensuremath{\mathcal{B}}}
\newcommand{\cF}{\ensuremath{\mathcal{F}}}
\theoremstyle{plain}
\newtheorem{Thm}{Theorem}[section]
\newtheorem{Lem}[Thm]{Lemma}
\newtheorem{Prop}[Thm]{Proposition}
\newtheorem{Cor}[Thm]{Corollary}
\theoremstyle{definition}
\newtheorem{Asmp}[Thm]{Assumption}
\newtheorem{Def}[Thm]{Definition}
\newtheorem{Rem}[Thm]{Remark}
\newtheorem{Ex}[Thm]{Example}
\numberwithin{equation}{section}
\renewcommand\section{\@startsection {section}{1}{\z@}%
                                   {-3.5ex \@plus -1ex \@minus -.2ex}%
                                   {2.3ex \@plus.2ex}%
                                   {\normalfont\large\bf}}
\renewcommand\subsection{\@startsection {subsection}{1}{\z@}%
                                   {-3.5ex \@plus -1ex \@minus -.2ex}%
                                   {2.3ex \@plus.2ex}%
                                   {\normalfont\normalsize\bf}}
\newcommand{\rd}{\ensuremath{\mathrm{d}}}
\begin{document}

\title{Generalized uniform laws for 
tied-down occupation times of 
infinite ergodic transformations}
\date{\today}
\author{Jon. Aaronson \and Toru Sera}
\begin{abstract} We establish a conditional limit theorem for occupation times of infinite ergodic transformations under a tied-down condition, that is, the condition that the orbit returns to a reference set with finite measure at the final observation time. The class of limit distributions is the generalization of the uniform distribution which was discovered by M.\ Barlow, J.\ Pitman and M.\ Yor in [\textit{S\'eminaire de Probabilit\'es XXIII. Lecture Notes in Mathematics}, volume 1372 (1989), 294--314]. 
%Our result is similar to generalized uniform laws for one-dimensional diffusion bridges. 
For the proof we utilize operator renewal theory.
Our result can be applied to intermittent maps with two or more indifferent fixed points.  
\end{abstract}

\address[Jon Aaronson]{School of Math. Sciences, Tel Aviv University,
69978 Tel Aviv, Israel.}
\email{aaro@tau.ac.il}
\address[Toru Sera]{Department of Mathematics, Graduate School of Science, Osaka University,
Toyonaka, Osaka 560-0043, Japan.}
\email{sera@math.sci.osaka-u.ac.jp}

\subjclass{Primary 37A40; Secondary 37A50, 60F05}
\keywords{infinite ergodic theory, operator renewal theory, intermittent maps, Barlow--Pitman--Yor generalized uniform distributions}

\maketitle
%\tableofcontents

%%%%% text %%%%%

\section{Introduction}

Paul L\'evy established arcsine and uniform laws for occupation times of one-dimensional Brownian motion in \cite{Levy39}.
The arcsine law is now classical in probability theory, and has been extended to a variety of classes of stochastic processes, such as 
random walks \cite{ErdKac, Spa54, Spi56},
renewal processes \cite{Lam58, FKY}, diffusion processes \cite{BPY, Wat95, Ya.Y17}, L\'evy processes \cite{GetSha94}, infinite ergodic transformations \cite{Tha02, ThZw, Zwe07cpt, SerYan19, Ser20} and  random dynamical systems \cite{HaYa, NNTY}. The uniform law has also been extended to, e.g., random walk bridges \cite{ChuFel, Lip52, Spa53}, diffusion bridges \cite{BPY, Ya.Y06, JLP, Jam10}
and L\'evy or exchangeable bridges \cite{FitGet95, Kni96}.

In this paper we generalize the uniform law for occupation times to infinite ergodic transformations under a tied-down condition, i.e., the condition that the orbit returns to a reference set with finite measure at the final observation time. 
In previous studies \cite{AarSer22, AarSer23+} we studied occupation times on a set with finite measure under the tied-down condition. 
In this paper we study occupation times on sets with infinite measure under the tied-down condition.
Our abstract result can be applied to so-called intermittent maps, more specifically, non-uniformly expanding interval maps with two or more indifferent fixed points.
Our result is similar to those of \cite{BPY, Ya.Y06}, but our approach is different, using operator renewal theory   as in \cite{AaDe, Sar02, Gou11, MelTer, AarSer22, AarSer23+} to deduce certain multi-dimensional local limit theorems and local large deviation estimates, from which we obtain the desired generalized uniform laws. 
The theory of regular variation plays an important role.

Before going into details,
let us recall L\'evy's arcsine and uniform laws.
Let $(B_s)_{s\geq0}$ be a one-dimensional Brownian motion with $B_0=0$ which is defined on a probability space $(\Omega, \cF, \bP)$, and let $S_t=\int_0^t 1_{\{B_s>0\}}\rd s$ be the occupation time of $(B_s)_{s\geq0}$ on the positive side up to time $t$.
Then, for each $t>0$ and $x\in[0,1]$,
\begin{equation}
\label{intro:arcsine-law}
   \bP\bigg[\frac{S_t}{t}\leq x\bigg]
   =
   \int_0^x \frac{\rd s}{\pi\sqrt{s(1-s)}}=
    \frac{2}{\pi}\arcsin(\sqrt{x}).
\end{equation}
In other words, the occupation time ratio $S_t/t$ has the arcsine distribution on $[0,1]$.
The equality \eqref{intro:arcsine-law} is called L\'evy's arcsine law for occupation times.
In addition,
\begin{equation}
\label{intro:uniform-law}
	\bP\bigg[
	\frac{S_t}{t}\leq x
	\:\bigg|\: B_t=0\bigg]=\int_0^x \rd s=x,
\end{equation}
where $\bP[A|B]$ denotes the conditional probability of $A$ given $B$. In other words, given $B_t=0$, the occupation time ratio $S_t/t$  has the uniform distribution on $[0,1]$. 
The equality \eqref{intro:uniform-law} is called L\'evy's uniform law for occupation times.
%Roughly speaking, it is not likely that $S_t/t$  is close to $0$ or $1$ because of the condition $B_t=0$.

Let us illustrate Thaler's generalized arcsine laws for intermittent maps \cite{Tha02}.

\begin{Ex}[arcsine law for Boole's transformation]\label{Ex:Gen-Arc}
Following \cite{Tha02},
let us define an interval map $f:[0,1]\to [0,1]$ by
\begin{align}
  f(x)=
  \begin{cases}\displaystyle
  \frac{x(1-x)}{1-x-x^2}, &0\leq x \leq 1/2,
  \\[10pt]
  \displaystyle
  1-f(1-x), & 1/2<x\leq 1.                                                                                                                                                                                                                                                                                                                                                                                                             
  \end{cases}
\end{align}
Then $f(0)=0$, $f(1)=1$, $f'(0)=f'(1)=1$ and 
\begin{equation}
f(x)-x=1-x-f(1-x)\sim x^3, 
\quad\text{as $x\to 0$}.	
\end{equation}
Therefore $0$ and $1$ are indifferent fixed points of $f$. The restrictions $f|_{(0,1/2)}:(0,1/2)\to(0,1)$ and $f|(1/2,1):(1/2,1)\to(0,1)$ are $C^2$-bijections and non-uniformly expanding.  
The map $f$ is conjugated to Boole's transformation $Tx=x-x^{-1}$ ($x\in\bR\setminus \{0\}$). 
Indeed, let $\phi(x)=(1-x)^{-1}-x^{-1}$ ($x\in(0,1)$), then $f=\phi^{-1}\circ T\circ \phi$ on $(0,1)\setminus\{1/2\}$. We refer the reader to \cite{Boole, AdWe} for the details of Boole's transformation.
The orbit $(f^k(x))_{k=0}^\infty$ exhibits an intermittent behavior, i.e., it stays near $0$ or $1$ for long time, and it escapes from $0$ and $1$ intermittently but returns near $0$ or $1$ immediately. See \cite[Section 1]{Ser20} for the figure of the intermittent behavior. 
The map $f$ admits an ergodic invariant measure 
\begin{equation}
\mu(\rd x)
=
\bigg(\frac{1}{x^2}+\frac{1}{(1-x)^2}\bigg)\rd x,
\quad
0\leq x \leq 1.
\end{equation} 
Take a $2$-periodic point $\gamma=\sqrt{2}-1\in(0,1/2)$. Then $f(\gamma)=2-\sqrt{2}\in(1/2,1)$ and $f^2(\gamma)=\gamma$. Set 
\begin{equation}
A_1=[0,\gamma),
\quad 
Y=[\gamma,f(\gamma)]
\quad \text{and} \quad 
A_2=(f(\gamma), 1].
\end{equation}
Then $\mu(A_j)=\infty$ and $\mu(Y)\in(0,\infty)$.
For $A\subset[0,1]$, denote by 
\begin{align}
	S_{A}(n)=\sum_{k=1}^n 1_{A}\circ f^k
\end{align}
the occupation time on $A$ of the orbit $(f^k (x))_{k=0}^\infty$ from time $1$ to time $n$. Then 
\begin{align}
\frac{S_{A_1}(n)+S_{A_2}(n)}{n}
=
1-\frac{S_Y(n)}{n} \to 1, \quad\text{a.e., as $n\to\infty$},	
\end{align}
 which follows from Birkhoff's ergodic theorem. 
We are interested in the asymptotic behavior of $S_{A_1
}(n)/n$. 
We now explain a typical example of Thaler's generalized arcsine laws \cite{Tha02}.
Let $\nu(\rd x)$ be a probability measure on $[0,1]$ absolutely continuous with respect to $\mu(\rd x)$ (or equivalently, with respect to the Lebesgue measure $\rd x$). We interpret $\nu(\rd x)$ as the distribution of the initial point $x$ of the orbit $(f^k (x))_{k=0}^\infty$.  Then
\begin{align}\label{intro:gen-arc}
    \lim_{n\to\infty}\nu\bigg[\frac{S_{A_1}(n)}{n}\leq t \bigg]
    =
    \int_0^t \frac{\rd s}{\pi\sqrt{s(1-s)}}=
    \frac{2}{\pi}\arcsin(\sqrt{t}),
    \quad t\in[0,1].	
\end{align}

%For the proof, it is important to make use of the first return time
%\begin{align}
%\varphi(x)=\min\{k\geq1: f^k(x)\in Y\},\quad
%    x\in [0,1],
%\end{align}
%similarly as in the renewal theory.
%It is worth noting that there exists a constant $C>0$ such that, for $j=1,2$,
%\begin{align}
%\mu(Y\cap \{S_{A_j}(\varphi)=n\})\sim C n^{-3/2}, \quad \text{as $n\to\infty$,}
%\end{align}
%and hence
%\begin{equation}
%	\mu(Y\cap \{S_{A_j}(\varphi)\geq n\})\sim 2C n^{-1/2}, \quad \text{as $n\to\infty$.}
%\end{equation}
%Indeed Thaler \cite{Tha02} utilized  this kind of regular variations for the proof of his generalized arcsine law.
\end{Ex}

We now illustrate our result. 

\begin{Ex}[uniform law for Boole's transformation]
Under the setting of Example \ref{Ex:Gen-Arc}, we further assume  $\nu(\rd x)$ admits a Riemann-integrable density function with respect to the Lebesgue measure on $[0,1]$. As a typical example of our main result, we obtain 
\begin{equation}\label{intro:unif}
	\lim_{n\to\infty}\nu\bigg[\frac{S_{A_1}(n)}{n}\leq t \:\bigg|\: f^{-n}(Y) 
	\bigg]
	=
	\int_0^t \rd s=t , 
	\quad\text{$t\in[0,1]$,}
\end{equation}
where $\nu[A|B]=\nu[A\cap B]/\nu[B]$ denotes the conditional probability of $A$ given $B$. We remark that $f^{-n}(Y)$ is a rare event. More specifically, there exists a constant $C>0$ such that
\begin{equation}
  \nu[f^{-n}(Y)]\sim C n^{-1/2},
  \quad
  \text{as $n\to\infty$.}	
\end{equation}
See Section \ref{sec:intermittent} for the details. 
%Hence it is natural that conditional probabilities given $f^{-n}(Y)$ have different properties from unconditional probabilities. 
\end{Ex}

This paper is organized as follows.
In Section \ref{sec:limit}, we recall a generalization of the uniform distribution.
In Section \ref{sec:setting}, we set up notations and terminology of infinite ergodic theory and operator renewal theory. 
In Section \ref{sec:main}, we state our main results in abstract setting.
In Section \ref{sec:asymp}, we state and prove a multi-dimensional  local limit theorem and a multi-dimensional local large deviation estimate by using operator renewal theory. 
In Sections \ref{sec:proof} and \ref{sec:proof2}, we use the local limit theorem and the local large deviation estimate to prove our main results.
In Section \ref{sec:intermittent}, we apply our abstract results to intermittent maps.

\section{Barlow--Pitman--Yor generalized uniform distributions}\label{sec:limit}

In this section, we use independent one-sided stable random variables to introduce the generalizations of the uniform distribution  
which appeared in \cite[Th\'eor\`eme 2]{BPY} as joint distributions of occupation times and local times of skew Bessel diffusion bridges on multi-ray.

First of all, we recall a relationship between one-sided stable distributions and Mittag-Leffler distributions.
Let $(\Omega,\cF,\bP)$ be a probability space. 
Let $\alpha\in(0,1)$ and $c\in(0,\infty)$ be constants. Let $\xi$ be a one-sided $\alpha$-stable random variable with Laplace transform
\begin{equation}
\bE[e^{-\lambda\xi}]=\exp(-\lambda^\alpha c), \quad\lambda\geq0.
\end{equation}
Its characteristic function is given by
\begin{equation}
\bE[e^{is\xi}] 
	= 
	\exp\bigg[\!
	-|s|^\alpha c
	\bigg(\cos\bigg(\frac{\pi\alpha}{2}\bigg)-i\sgn(s)\sin\bigg(\frac{\pi\alpha}{2}\bigg)\bigg)
	\bigg],
	\quad s\in\bR,
\end{equation}
where $i=\sqrt{-1}$.
By \cite[Theorem 11]{Fel49},
 $\xi^{-\alpha}$ has an $\alpha$-Mittag--Leffler distribution with
\begin{equation}
	\bE[\exp(z \xi^{-\alpha})]
	=
	\sum_{n\geq0}\frac{(z/c)^n}{\Gamma(1+n\alpha)}, \quad z\in\bC. 
\end{equation}
Note that the $n$th moment of $\xi^{-\alpha}$ is given by $\bE[\xi^{-n\alpha}]=n!/(c^n \Gamma(1+n\alpha))$.
See also \cite[p.453]{Fel71} and \cite[Example 24.12]{Sat} for the details.

%Denote by $\bE[\cdot]$ the expectation with respect to $\bP$.
Let $d\in\bN$, $\alpha\in(0,1)$ and $\beta=(\beta_1,\dots,\beta_d)\in(0,1)^d$ with $\sum_{j=1}^d \beta_j=1$.
We denote by $\xi_1,\dots,\xi_d$ independent one-sided $\alpha$-stable random variables with Laplace transforms 
\begin{align*}
\bE[\exp(-\lambda\xi_j)]=\exp(-\lambda^\alpha \beta_j), \quad\lambda\geq0, \;j=1,\dots,d.
\end{align*}
Its characteristic function $\bE[e^{is\xi_j}]$ is given by
\begin{align}\label{eq:Fourier-xi}
	\bE[e^{is\xi_j}] 
	= 
	\exp\bigg[\!
	-|s|^\alpha\beta_j
	\bigg(\cos\bigg(\frac{\pi\alpha}{2}\bigg)-i\sgn(s)\sin\bigg(\frac{\pi\alpha}{2}\bigg)\bigg)
	\bigg],
	\quad s\in\bR.
\end{align}
The random variable $\xi_j$ admits a continuous version of probability density function, which will be denoted by $\psi_j(y)$, $y\geq0$. The Fourier inversion formula implies that
\begin{align}\label{def:psi_j}
    \psi_j(y) 
    =
    \frac{\bP[\xi_j\in \rd y]}{\rd y}
    = 
    \frac{1}{2\pi}\int_{\bR} \bE[e^{is\xi_j}]e^{-isy} \rd s,
    \quad y\geq0.
\end{align}
Set 
\begin{align}
\eta=\xi_1+\dots+\xi_d,
\end{align}
which is a one-sided $\alpha$-stable random variable with Laplace transform $
	\bE[\exp(-\lambda \eta)]=\exp(-\lambda^\alpha)$, $\lambda \geq0$. 
%where $\Gamma(p)=\int_0^\infty e^{-t}t^{p-1}\rd t$ ($p>0$) denotes the gamma function. 
Note that $\bE[\eta^{-\alpha}]=1/\Gamma(1+\alpha)$.
%Our normalization of the Mittag-Leffler distribution is the same as in \cite{DK, Wen64, BPY, Sat}, but differs from \cite{Aa97}.

We now introduce Barlow--Pitman--Yor generalized uniform distributions.

\begin{Def}\label{DEF:gen-unif-dist}
 Denote by $(U_1,\dots,U_d,W)$ a $[0,1]^d \times[0,\infty)$-valued random variable whose joint distribution is characterized by the following: for any bounded measurable function $g:[0,1]^d\times [0,\infty)\to \bR$, 
 \begin{align}\label{def:gen-unif}
 \bE\big[g(U_1,\dots,U_d, W\big)\big]
	=
	\bE\bigg[g\bigg(\frac{\xi_1}{\eta}, \dots, \frac{\xi_d}{\eta}, \frac{1}{\eta^\alpha} \bigg) \frac{\Gamma(1+\alpha)}{\eta^\alpha}\bigg].
\end{align}
We call the joint distribution of $(U_1,\dots,U_d,W)$ the $(\alpha,\beta_1,\dots,\beta_d)$-Barlow--Pitman--Yor generalized uniform distribution.
\end{Def}

The right-hand side of \eqref{def:gen-unif} can be rewritten as
\begin{align}
	\int_{[0,\infty)^d}	
    g\bigg(\frac{y_1}{\sum_j y_j},\dots,\frac{y_d}{\sum_j y_j},\frac{1}{(\sum_j y_j)^\alpha}\bigg)
	\frac{\Gamma(1+\alpha)}{(\sum_j y_j)^\alpha}
	 \bigg(\prod_j \psi_j(y_j)\bigg)\rd y_1\cdots\rd y_d.	
\end{align}
%Note that the right-hand side of \eqref{def:gen-unif} is a positive linear functional with respect to $g$.
%In the case of $g\equiv 1$, it is equal to $\Gamma(1+\alpha)\bE[\eta^{-\alpha}]=1$. Hence Definition \ref{DEF:gen-unif-dist} is well-defined by virtue of the Riesz--Markov--Kakutani representation theorem.
It is easily seen that $\sum_j U_j=1$, a.s. 
In the case  $\alpha=1/2$, the law of $U_1$ is given by
\begin{equation}
	\bP[U_1\in \rd x]
	=
	\frac{\beta_1(1-\beta_2)}{2}
	\Big(
	\beta_1^2(1-x)+(1-\beta_1)^2
	x
	\Big)^{-3/2}
	\rd x,
	\quad 0\leq x\leq 1,
\end{equation}
as stated in \cite[Th\'eor\`eme 3]{BPY}.
See also \cite[Example 1]{Ya.Y06}.
In the special case  $\alpha=\beta_1=1/2$, the  law of $U_1$ is the usual uniform distribution on $[0,1]$. For general parameters $(\alpha,\beta)$, the law of $U_1$ is also characterized by the Stieltjes transform 
\begin{equation}
	\bE[(\lambda + U_1)^{-\alpha}]= (\beta_1(1+\lambda)^\alpha +(1-\beta_1)\lambda^\alpha)^{-1},
	\quad
	\lambda>0.
\end{equation}
See \cite{Ya.Y06, YaYa08} for details of $U_1$, such as its distribution function $G_{\alpha,\beta_1}$ and its density function $G'_{\alpha, \beta_1}$. The law of $W$ also appeared in \cite[Theorem 6]{Wen64}.

\begin{Rem}
The joint-distribution and marginal distributions of
\begin{equation}
    \Big(\frac{\xi_1}{\eta},\dots,\frac{\xi_d}{\eta},\frac{1}{\eta^\alpha}\Big)	
\end{equation}
have appeared as the limit distributions of occupation times of Markov chains \cite{DK, Lam58},
  diffusion processes \cite{Kas, BPY, Wat95, Ya.Y17, JLP, Jam10} and infinite ergodic transformations \cite{Aa81, Aa86, Tha00, ThZw, Zwe07cpt, SerYan19, Ser20}.
The joint-distribution  of $(\xi_1/\eta,\dots,\xi_d/\eta)$ is a multi-dimensional version of Lamperti's generalized arcsine distribution. As mentioned above,  $\eta^{-\alpha}$ has an $\alpha$-Mittag-Leffler distribution. 
\end{Rem}

\section{Setting}\label{sec:setting}
In this section, we will prepare some notations and formulate our assumptions for the main result.
We emphasize that suitable intermittent interval maps satisfy our assumptions, as we shall see in Section \ref{sec:intermittent}.

Let $(X,\cA,\mu)$ be a $\sigma$-finite measure space with $m[X]=\infty$. 
Let $f:(X,\cA,\mu) \to (X,\cA,\mu)$ be a conservative, ergodic, measure preserving transformation (CEMPT for short). 
%Equivalently, we assume that $\mu\circ f^{-1}=\mu$ and
%\begin{align*}
%	\sum_{n\geq0}1_{\{f^n x\in A\}}=\infty,
%	\quad
%	\text{$\mu$-a.e.$x$., for any $A\in\cA$ with $\mu(A)>0$,}
%\end{align*}
%where $1_{\{\cdot\}}$ denotes the indicator function.
%Let us denote by $R:L^1(\mu)\to L^1(\mu)$ the transfer operator of $f$ with respect to $\mu$. Then we have $\int_X (Ru)v \rd m =\int_X u(v\circ f) \rd m$ for any $u\in L^{1}(m)$ and $v\in L^\infty(m)$. 
For the basic properties of CEMPT,  see \cite[Chapter 1]{Aa97}. Following \cite{ATZ, ThZw, Zwe07cpt, SerYan19, Ser20}, we impose the following assumption:

\begin{Asmp}[dynamical separation]\label{asmp:dyn-sep}
	Let $d\geq2$ be a positive integer. Assume that the following two conditions.
\begin{enumerate}
\item The state space $X$ can be decomposed as $X=Y\cup A_1 \cup \dots\cup A_d$, where $Y,A_1,\dots,A_d\in\cA$ are disjoint sets with $\mu[Y]=1$ and $\mu[A_1]=\dots=\mu[A_d]=\infty$.
\item $f^{-1}(A_i)\subset A_i\cup Y$ for $i=1,\dots,d$. In other words, the condition [$x\in A_i$ and $f^n (x) \in A_j$ for some $i\neq j$ and $n\geq 1$] implies the existence of $k<n$ for which $f^k (x)\in Y$.
\end{enumerate}  
\end{Asmp}
%Let us denote by $\mu_Y(\cdot):=m(\cdot\cap Y)$ the restriction of $\mu$ over $Y$. 
%From now on we always impose 

For $x\in X$, we denote the first return time to $Y$ by 
\begin{align}
\varphi=\varphi(x)=\min\{k\geq1: f^k (x) \in Y \}
<\infty,\quad\text{a.e.}
\end{align}
For a measurable set $A\in\cA$, we denote by 
\begin{align}
S_A(n)=\sum_{k=1}^{n}1_A\circ f^k
\end{align}
the occupation time on $A$ of the orbit $(f^k(x))_{k=0}^\infty$ from time $1$ to time $n$.
Because of Assumption \ref{asmp:dyn-sep}, we have
\begin{align}\label{eq:varphi_j}
 S_{A_j}(\varphi)
 =
	\begin{cases}
		\varphi-1, & \text{on $Y\cap f^{-1}(A_j)$,}
		\\
		0, &\text{on $Y\cap f^{-1}(A_j^c)$.}
	\end{cases}
\end{align}
For $j=1,\dots,d$ and $n\geq1$, set
\begin{align}
    r_{0,0}&
    = \mu[Y\cap\{\varphi=1\}]=\mu[Y\cap f^{-1}(Y)],
    \\
    r_{j,n}
    &=\mu[Y\cap \{S_{A_j}(\varphi)=n\}]=\mu\bigg[Y\cap \bigcap_{j=1}^n f^{-j}(A_j)\cap f^{-(n+1)}(Y)\bigg].	
\end{align}
It is easily seen that $\sum_{j, n}r_{j,n}=1$, where the sum is taken over $(j,n)=(0,0)$ and $(j,n)\in\{1,\dots,d\}\times \bN$.
%Note that $\varphi=\varphi_1+\dots+\varphi_d+1$.

%\begin{align}
% S_A(n)=S_A(n)(x):=\sum_{k=1}^n \mathbbm{1}_A(T^k x),
% \quad n\geq0, \; x\in X.	
%\end{align}

%Let $g, h$ be positive-valued measurable functions on $(0,\infty)$.  If $g(t)/h(t)\to 1$ as $t\to\infty$, then we write $g(t)\sim h(t)$ as $t\to\infty$. If $g(\lambda t)\sim g(t)$ as $t\to \infty$ for any $\lambda>0$, then we say that $g$ is a slowly varying function. 

%The following assumption is essentially needed for the strong distributional convergence of the number of visits to the sets $Y, A_1,\dots,A_d$. 

\begin{Asmp}[regular variation]\label{asmp:reg-var}
	Let $\alpha\in (0,1)$ and $\beta=(\beta_1,\dots,\beta_d)\in(0,1)^d$  with $\sum_{j=1}^d\beta_j=1$. 
	Let $\ell:[0,\infty)\to(0,\infty)$ be a measurable function slowly varying at $\infty$. For each $j=1,\dots,d$, it is assumed that
\begin{align}\label{asmp:reg-var-1}
    \sum_{k\geq n} r_{j,k} \sim \beta_j n^{-\alpha} \ell(n)
    \quad 
    \text{and}
    \quad
    r_{j,n} = O(n^{-\alpha-1}\ell(n)),
    \quad\text{as $n\to\infty$.}
\end{align}

%In addition, if $\alpha\in (0, 1/2]$ then it is required that 
%\begin{align}\label{asmp:reg-var-2}
%  	\sum_{j=1}^n r_{j,n} \;(= \mu[\varphi=n+1]) \sim  \alpha n^{-\alpha-1} \ell(n),
%    \quad\text{as $n\to\infty$.}
%\end{align}

\end{Asmp}

Because of \eqref{eq:varphi_j}, the condition \eqref{asmp:reg-var-1} implies the asymptotic relations 
\begin{align}
\mu[Y\cap \{\varphi>n\}]\sim n^{-\alpha}\ell(n)\quad \text{and} \quad\mu[Y\cap \{\varphi=n\}]=O(n^{-\alpha-1}\ell(n)).
\end{align}

Define the first return map $F:Y\to Y$ by
\begin{align}
F(x)=f^{\varphi(x)}(x).
\end{align}
Then $F$ is a CEMPT on $(Y,\cA\cap Y, \mu|_Y)$ where $\cA\cap Y=\{A\cap Y: A\in \cA\}$ and $\mu|_Y$ denotes the restriction of $\mu$ over $Y$.

Let us denote by $R:L^1(Y)\to L^1(Y)$ the transfer operator of  $F$ with respect to $\mu|_Y$. The operator $R$ is characterized by the equation
\begin{equation}
\int_Y (Ru)v\:\rd\mu=\int_Y u(v\circ F)\:\rd \mu,
\quad
u\in L^1(Y),\;v\in L^\infty (Y).
\end{equation}
For $j=1,\dots,d$ and $n\geq1$, we define a bounded linear operators $R_{0,0}:L^1(Y)\to L^1(Y)$ and $R_{j, n}:L^1(Y)\to L^1(Y)$ by
\begin{align}
	R_{0,0} u=R\Big(1_{\textstyle\{\varphi=1\}}u\Big),
	\quad
	R_{j, n}u=
	R\Big(1_{\textstyle \{S_{A_j}(\varphi)=n\}} u\Big),
	\quad u\in L^1(Y). 
\end{align}
Set 
\begin{align}
	\bD^d=\{(z_1,\dots,z_d)\in\bC^d:|z_1|,\dots,|z_d|<1\},
	\\ 
	\bar{\bD}^d=\{(z_1,\dots,z_d)\in\bC^d:|z_1|,\dots,|z_d|\leq 1\}.
\end{align}
We write $\bD^1=\bD$ and $\bar{\bD}^1=\bar{\bD}$ for short.
Given $z=(z_1,\dots,z_d)\in \bar\bD^d$, we define a bounded linear operator $R(z):L^1(Y)\to L^1(Y)$ by
\begin{align}
    R(z)u=\bigg(R_{0,0}+\sum_{\substack{j=1,\dots, d \\ n\geq1}}z_j^n R_{j, n}\bigg) u,
    \quad u\in L^{1}(Y),	
\end{align}
Note that  $R(1,\dots,1)=R$.

For a Banach space $(\cB, \|\cdot\|_{\cB})$, we denote by $\mathscr{L}(\cB)$ the class of bounded linear operators $T:\cB\to \cB$, which is endowed with the operator norm 
$\|T\|_{\mathscr{L}(\cB)}=\sup\{\|Tu\|_{\cB}:u\in\cB,\;\|u\|_{\cB}\leq 1\}$, $T\in\mathscr{L}(\cB)$.
Imitating \cite{Sar02, Gou11, MelTer}, we impose the following assumption:

\begin{Asmp}[aperiodic renewal sequence]\label{asmp:ape-ren}
Let $\cB\subset L^\infty(Y)$ be a Banach space. Suppose $\cB$ contains constant functions and $\|u\|_{L^\infty(Y)} \leq \|u\|_{\cB}$ for $u\in\cB$. Moreover assume that the following three conditions.
\begin{enumerate}
    \item \label{asmp:ape-ren-bdd} The operator $R_{j,n}$ can be regarded as an element of $\mathscr{L}(\cB)$. In addition there exists $C>0$ such that for any $j=1,\dots,d$ and $n\geq1$, it holds that $\|R_{j,n}\|_{\mathscr{L}(\cB)}\leq C r_{j,n}$.  
  	\item \label{asmp:ape-ren-spec}(spectral gap): the spectrum of $R\in\mathscr{L}(\cB)$  consists of an isolated simple eigenvalue $1$ and a compact subset of $\bD$. 
	
	\item \label{asmp:ape-ren-ape}(aperiodicity): for each $z\in\bar{\bD}^d\setminus\{(1,\dots,1)\}$ with $|z_1|=\dots=|z_d|=1$, the spectral radius of  $R(z)\in \mathscr{L}(\cB)$ is strictly less than $1$. 
\end{enumerate}
\end{Asmp}

%It is easily seen that $P(R(z)-R)P=\sum_{i,n}\mu[\varphi_i=n](z^n_i-1) P$.
From now on we suppose that Assumption \ref{asmp:ape-ren} is satisfied.
We define a one-dimensional projection $P\in \mathscr{L}(\cB)$ by
\begin{align}
Pu=\int_Y u\:\rd \mu, \quad u\in \cB.
\end{align}
Note that $P$ is the eigenprojection of $R$ for the simple eigenvalue $1$, since $\int_Y 1 \:\rd\mu=1$. It is easily seen that $PR_{j,n}P= r_{j,n}P$.

For $t\geq1$, we denote by $[t]$ the maximal integer which is less than or equal to $t$. Set
\begin{align}\label{def:b_t}
  & b_t= \frac{1}{\Gamma(1-\alpha)\mu[Y\cap \{\varphi\geq[t]\}]}
  \sim
   \frac{t^{\alpha}}{\Gamma(1-\alpha)\ell(t)}, \quad
   \text{as $t\to\infty$.} 
\end{align} 
Note that $(b_n/\Gamma(1+\alpha))_{n=1}^\infty$ is a return sequence in the sense of \cite{Aa97}.

We denote by $\varphi_k$ the $k$th return time to $Y$, that is,
\begin{align}
\varphi_0=0\quad\text{and}\quad
\varphi_k=\sum_{m=1}^{k}\varphi\circ F^{m-1},
\quad k=1,2,\dots
\end{align}
For non-negative integers $n_1\dots,n_d,k$, we define $T_{n_1,\dots,n_d}(k)\in \mathscr{L}(\cB)$ by
\begin{align}
T_{n_1,\dots,n_d}(k)u=R^k\Big(1_{\textstyle\bigcap_{j=1}^d\{S_{A_j}(\varphi_k)=n_j\}}u\Big), \quad u\in\cB.
\end{align}
Note that $T_{n_1,\dots,n_d}(k)$ is the coefficient of $z_1^{n_1}\cdots z_d^{n_d}$ in $R(z)^k$. 
For $n\geq1$ and a bounded continuous function $g:[0,1]^{d}\times [0,\infty)\to\bR$, we define $T_{n,g}\in\mathscr{L}(\cB)$ by 
\begin{align}
\begin{split}
	T_{n,g}u&=
	\sum_{k=1}^n
	R^k \bigg[g\bigg(\bigg(\frac{S_{A_j}(n)}{n}\bigg)_{j=1}^d, \frac{S_Y(n)}{b_n}\bigg) 1_{\textstyle\{\varphi_k=n\}}u\bigg] 
	\\
	&=
	\sum_{\substack{n_1,\dots,n_d,k \\ n_1+\dots+n_d+k=n}} g\Big(\frac{n_1}{n},\dots,\frac{n_d}{n},\frac{k}{b_n}\Big)T_{n_1,\dots,n_d}(k)u,
	\quad
	u\in\cB.
\end{split}
\end{align}

%Then it is easily seen that
%\begin{align}\label{T_ng}
%    \int_A (T_{n,g}u)\rd \mu
%    =
%    \int_{f^{-n}A} g\bigg(\Big(\frac{S_{A_j}(n)}{n}\Big)_{j=1}^d, \frac{S_{Y}(n)}{b_n}\bigg)u\:\rd \mu,
%    \quad 
%    A\in \cA\cap Y, \;u\in\cB. 	
%\end{align}
%%
%%\begin{Lem}
%%  	
%%\end{Lem}

%More precisely, 
%there exist $\lambda(z)\in \bC$, $P(z),Q(z)\in\mathscr{L}(\cB)$ such that the following hold:
%$R(z)$ can be decomposed as
%\begin{align}
%    R(z)= \lambda(z)P(z)+Q(z),	
%    \quad\text{for $z\in \bar{\bD}^d$ sufficiently close to $(1,\dots,1)$},
%\end{align}
%where $P(z)$ is a one-dimensional projection, $P(z)Q(z)=Q(z)P(z)=0$, the  

\section{Main results}\label{sec:main}

%The notation ``$\wto$'' means the weak convergence of finite measures. 
%For a Polish space $H$, let us denote by $\cP(H)$ the space of Borel probability measures on $H$. We endow $\cP(H)$ with the Polish topology of weak convergence. 

\begin{Thm}\label{Thm:conv-of-density}
Suppose that Assumptions \ref{asmp:dyn-sep}, \ref{asmp:reg-var} and \ref{asmp:ape-ren} are satisfied.  Let $g:[0,1]^{d}\times [0,\infty)\to\bR$ be a bounded continuous function. Then it holds that 
\begin{align}
   \frac{\Gamma(\alpha)n}
   {b_n}T_{n,g}
   \to
   \bE\Big[g\big((U_j)_{j=1}^d, W\big)\Big]P,
   \quad \text{in $\mathscr{L}(\cB)$,}
   \label{Thm:conv-of-density-0}	
\end{align}
as $n\to\infty$.
Here \eqref{Thm:conv-of-density-0} means the convergence  with respect to the operator norm $\|\cdot\|_{\mathscr{L}(\cB)}$. 
\end{Thm}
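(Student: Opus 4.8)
The plan is to pass through Fourier/generating-function space, using operator renewal theory. Introduce the generating operator $R(z)^k$ and the renewal operator $\sum_{k\ge0} R(z)^k = (I-R(z))^{-1}$ on $\cB$. The quantity $\frac{\Gamma(\alpha)n}{b_n}T_{n,g}$ is, up to the $g$-weighting, a coefficient-extraction of the renewal operator $(I-R(z))^{-1}$; the factor $n/b_n$ is exactly the scale at which a local limit theorem for the renewal sequence kicks in. So I would first establish (as the section on asymptotics in the paper presumably does) a multi-dimensional local limit theorem: for $z_j = e^{is_j/n^{?}}$-type scalings, $R(z)$ has a leading simple eigenvalue $\lambda(z)$ with $1-\lambda(z) \sim c\,(\text{stable exponent in }s)\cdot \mu[Y\cap\{\varphi\ge n\}]$, coming from Assumption \ref{asmp:reg-var} and the Fourier expansion \eqref{eq:Fourier-xi} of the one-sided $\alpha$-stable laws $\xi_j$ with weights $\beta_j$. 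The aperiodicity hypothesis (Assumption \ref{asmp:ape-ren}\eqref{asmp:ape-ren-ape}) together with the spectral gap \eqref{asmp:ape-ren-spec} confines the rest of the spectrum and lets me control $R(z)^k$ uniformly away from the diagonal $|z_j|=1$.

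\textbf{Reduction to a scalar sum.} Using the eigenprojection $P$ and the perturbed projection $P(z)$ near $z=(1,\dots,1)$, write $R(z)^k = \lambda(z)^k P(z) + (\text{remainder})^k$, with the remainder having spectral radius bounded away from $1$ uniformly. Applying $P$ on both sides and using $PR_{j,n}P = r_{j,n}P$, the contribution of the principal part reduces the operator problem to the scalar problem of understanding
\[
\frac{\Gamma(\alpha)n}{b_n}\sum_{\substack{n_1+\dots+n_d+k=n}} g\Big(\tfrac{n_1}{n},\dots,\tfrac{n_d}{n},\tfrac{k}{b_n}\Big)\,[z_1^{n_1}\cdots z_d^{n_d}]\,\lambda(z)^k,
\]
which is a weighted renewal count. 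The weight $g$ is continuous and bounded, so by a Riemann-sum / Portmanteau argument it suffices to prove the convergence of the joint law of $\big((n_j/n)_j, k/b_n\big)$ under the (sub-probability) renewal measure with total mass $\sim b_n/(\Gamma(\alpha)n)$, and identify the limit with the law of $\big((\xi_j/\eta)_j, 1/\eta^\alpha\big)$ size-biased by $\Gamma(1+\alpha)/\eta^\alpha$ — precisely Definition \ref{DEF:gen-unif-dist}. This identification is where the local limit theorem for the $\xi_j$'s (densities $\psi_j$ in \eqref{def:psi_j}) and the known fact that $\eta^{-\alpha}$ is $\alpha$-Mittag--Leffler (hence $\bE[\eta^{-\alpha}]=1/\Gamma(1+\alpha)$) get used to pin down normalizing constants; the $\Gamma(\alpha)$ versus $\Gamma(1-\alpha)$ bookkeeping in \eqref{def:b_t} must be tracked carefully.

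\textbf{From scalars to operator norm.} To upgrade from $P(\cdot)P$ to genuine $\mathscr{L}(\cB)$-convergence I would use the standard operator renewal decomposition (à la Sarig, Gou\"ezel, Melbourne--Terhesiu): split $\sum_k R(z)^k$ into the principal part built from $\lambda(z)^k P(z)$ and a "good" part where the resolvent is holomorphic and bounded on a neighborhood of $\bar\bD^d$ thanks to Assumption \ref{asmp:ape-ren}; the good part contributes $o(b_n/n)$ in operator norm by a contour/Cauchy-estimate argument, using condition \eqref{asmp:ape-ren-bdd} ($\|R_{j,n}\|_{\mathscr{L}(\cB)}\le Cr_{j,n}$) to get absolute summability and hence equicontinuity of $z\mapsto R(z)$ up to the boundary. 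The principal part's coefficient asymptotics then transfer verbatim from the scalar analysis because $P(z)\to P$ in $\mathscr{L}(\cB)$ as $z\to(1,\dots,1)$.

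\textbf{Main obstacle.} I expect the hard part to be the uniformity in the local limit theorem: controlling $R(z)^k$ simultaneously for all $k\le n$ and all $z$ on the torus $|z_j|=1$, smoothly interpolating between the small-neighborhood regime (where the perturbed eigenvalue $\lambda(z)$ governs everything and one needs the sharp expansion $1-\lambda(z)\sim(\sum_j\beta_j|s_j|^\alpha(\cos\frac{\pi\alpha}{2}-i\sgn(s_j)\sin\frac{\pi\alpha}{2}))\,b_n^{-1}$ with the correct slowly varying correction) and the away-from-diagonal regime (where \eqref{asmp:ape-ren-ape} gives exponential decay but one must check the decay is uniform, typically via a compactness argument on the torus and continuity of the spectral radius). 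Handling the $O(n^{-\alpha-1}\ell(n))$ tail bound in \eqref{asmp:reg-var-1} to get a genuine local (not just integral) limit theorem, and making the Riemann-sum approximation for the $g$-weight rigorous when $k$ ranges over a window of width $\asymp b_n \ll n$, are the other technical points requiring care.
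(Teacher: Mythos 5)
Your overall strategy --- perturbation theory for the leading eigenvalue $\lambda(z)$ of $R(z)$, aperiodicity to kill the off-diagonal torus, a local limit theorem for $R(z)^k$, a Riemann-sum argument in the $g$-weight, and the identification of the limit with Definition \ref{DEF:gen-unif-dist} --- is indeed the paper's strategy, and most of what you sketch corresponds to Proposition \ref{Prop:loc-lim}, Lemmas \ref{Lem:spec-gap}--\ref{Lem:spec-gap-2}, and the $T^{(1)}_{n,g}$ part of the proof in Section \ref{sec:proof} (where the Riemann sum is organized via the vague convergence of the measure $\Lambda_n$ in Lemma \ref{Lem:Lambda-n}). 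However, there is one genuine gap and one structural misstep.

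The gap: you never control the regime of \emph{small} $k$, say $k< b_{n/c}$, and neither the local limit theorem nor a resolvent/Cauchy estimate can do it. The LLT gives only $\|T_{n_1,\dots,n_d}(k)\|_{\mathscr{L}(\cB)}=O(a_k^{-d})$ uniformly, and summing this over the $\asymp n^{d-1}$ compositions of $n-k$ yields $O(n^{d-1}/a_k^{d})$, which is far larger than the target $o(b_n/n)$ when $a_k\ll n$. What is needed is a genuinely different estimate --- a multi-dimensional \emph{local large deviation} bound $\sum_{n_1+\dots+n_d+k=n}\|T_{n_1,\dots,n_d}(k)\|_{\mathscr{L}(\cB)}\le Ckn^{-\alpha-1}\ell(n)$ for $n\ge a_k$ (Proposition \ref{Prop:large-deviation}, adapted from Gou\"ezel), whose proof is itself a substantial decomposition according to the number and size of long excursions, using the tail bound $r_{j,n}=O(n^{-\alpha-1}\ell(n))$ and truncated operators $R^{(s)}(z)$. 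Your proposal flags ``uniformity'' as the hard point but does not identify this missing ingredient, without which the sum over $k\le b_{n/c}$ cannot be shown to be negligible. Relatedly, the framing via the renewal operator $(I-R(z))^{-1}$ and contour estimates does not fit here: because the weight $g(n_1/n,\dots,n_d/n,k/b_n)$ depends on $k$ through the local-time coordinate $k/b_n$, one cannot sum over $k$ first; the paper necessarily keeps $k$ as a free variable, applies the LLT to each $R(z_{s/a_k})^k$, and only then performs a Riemann sum in $k$ over the window $[b_{n/c},b_{cn})$, with the two complementary windows $k\ge b_{cn}$ and $k<b_{n/c}$ handled by direct counting and by the large deviation bound respectively, letting $c\to\infty$ at the end.
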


Theorem \ref{Thm:conv-of-density} will be proved in Section \ref{sec:proof}. 
In the case $g\equiv 1$, the convergence \eqref{Thm:conv-of-density-0} was obtained in \cite[Theorem 1.4]{Gou11}. We remark
\begin{equation}
	\frac{\Gamma(\alpha)n}
   {b_n}
   \sim
   \frac{\pi}{\sin(\pi\alpha)}n^{1-\alpha}\ell(n),
   \quad
   \text{as $n\to\infty$}.
\end{equation}

Moreover, we can relax the condition of the initial density $u$ in Theorem \ref{Thm:conv-of-density} following \cite[Section 10]{MelTer}, as we shall see below.
We denote by $L:L^1(X)\to L^1(X)$ the transfer operator of the map $f:X\to X$ with respect to $\mu$. Then $L$ is characterized by the equation
\begin{equation}
\int_X (Lu)v\:\rd\mu=\int_X u(v\circ f)\:\rd \mu,
\quad
u\in L^1(X),\;v\in L^\infty (X).
\end{equation}
It is easily seen that, for $u\in \cB$, 
\begin{equation}
    T_{n,g}u
    =1_Y L^n
    \bigg[g\bigg(\bigg(\frac{S_{A_j}(n)}{n}\bigg)_{j=1}^d, \frac{S_Y(n)}{b_n}\bigg) u 1_Y\bigg].	
\end{equation}
Set
\begin{equation}
   X_{0,0}=Y
   \quad\text{and}\quad
   X_{j,m}
   =
   \bigcap_{k=0}^{m-1}f^{-k}(A_j) \cap f^{-m}(Y),
  \quad
  j=1,\dots,d \;\text{and}\; m\geq1.	
\end{equation}
Then $\mu(X_{j,m})\leq \mu(Y)$.
For $u\in L^1(X)$ and $(j,m)=(0,0)$ or $(j,m)\in \{1,\dots,d\}\times \bN$, set 
\begin{equation}
	u_{j,m}
	=
	u 1_{X_{j,m}}.
\end{equation} 
Note that $L^m u_{j,m}$ is supported on $Y$ and 
\begin{equation}
\int_Y L^m u_{j,m} \:\rd \mu=\int_X u_{j,m}\:\rd \mu=\int_{X_{j,m}}u\:\rd \mu.
\end{equation}
Let 
\begin{equation}\label{B(X)}
	\cB(X)
    =
    \{u\in L^1(X)\cap L^\infty(X)\::\: L^m u_{j,m}\in \cB \;\text{for each $j,m$}\}.
\end{equation}

\begin{Thm}\label{Thm:extend-observable}
Under the setting of Theorem \ref{Thm:conv-of-density},
suppose $u\in \cB(X)$ satisfies 
\begin{equation}\label{Thm:extend-observable-1}
	\sum_{j,m} \|L^m u_{j,m}\|_{\cB}<\infty \quad
    \text{and}
    \quad
    \|L^m u_{j,m}\|_{\cB}=o(m^{-1}), \quad\text{as $m\to\infty$ $(j=1,\dots,d)$.}
\end{equation}
Then it holds that
\begin{equation}
\frac{\Gamma(\alpha)n}
   {b_n} 1_Y L^n
    \bigg[g\bigg(\bigg(\frac{S_{A_j}(n)}{n}\bigg)_{j=1}^d, \frac{S_Y(n)}{b_n}\bigg) u \bigg]
    \to
    \bE\Big[g\big((U_j)_{j=1}^d, W\big)\Big]\int_X u \:\rd \mu,	
    \quad
    \text{in $\cB$,}
\end{equation}
as $n\to\infty$.
\end{Thm}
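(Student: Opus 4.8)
The plan is to reduce Theorem~\ref{Thm:extend-observable} to Theorem~\ref{Thm:conv-of-density} by splitting $u$ according to where the orbit sits before its first visit to $Y$. By Assumption~\ref{asmp:dyn-sep}(2) together with $\varphi<\infty$ a.e., the sets $X_{0,0}=Y$ and $X_{j,m}$ ($j=1,\dots,d$, $m\ge1$) form a partition of $X$ modulo $\mu$-null sets, so $u=\sum_{j,m}u_{j,m}$ in $L^1(X)$ and hence
\[
1_YL^n\big[g(\cdots)u\big]=\sum_{j,m}1_YL^n\big[g(\cdots)u_{j,m}\big].
\]
A point of $X_{j,m}$ ($j\ge1$) spends the times $0,\dots,m-1$ in $A_j$ and enters $Y$ at time $m$, so $\Supp(L^nu_{j,m})\subseteq A_j$ when $n<m$ and the corresponding term vanishes; thus for each $n$ the sum is finite. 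Moreover, on $X_{j,m}$ one has $S_{A_i}(n)=(m-1)1_{\{i=j\}}+S_{A_i}(n-m)\circ f^m$ and $S_Y(n)=1+S_Y(n-m)\circ f^m$ whenever $m\le n$, and combining this with the intertwining identity $L^m\big[(h\circ f^m)w\big]=h\,L^mw$ and the definition of $\cB(X)$ (so that $v_{j,m}:=L^mu_{j,m}\in\cB$ is supported on $Y$), one rewrites, for $1\le m\le n$,
\[
1_YL^n\big[g(\cdots)u_{j,m}\big]=T_{n-m,\,g_{m,n}}\,v_{j,m},\qquad g_{m,n}(s,w)=g\Big(\Big(\tfrac{(m-1)1_{\{i=j\}}}{n}+\tfrac{n-m}{n}s_i\Big)_{i},\ \tfrac1{b_n}+\tfrac{b_{n-m}}{b_n}w\Big).
\]
Each $g_{m,n}$ is bounded by $\|g\|_\infty$, and for fixed $(j,m)$ it converges to $g$ locally uniformly as $n\to\infty$ since $b$ is regularly varying; for $(j,m)=(0,0)$ the term is simply $T_{n,g}u_{0,0}$ with $u_{0,0}\in\cB$.

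For a fixed pair $(j,m)$ one then obtains $\tfrac{\Gamma(\alpha)n}{b_n}\,1_YL^n[g(\cdots)u_{j,m}]\to\bE\big[g((U_j)_{j=1}^d,W)\big]\int_{X_{j,m}}u\,\rd\mu$ in $\cB$. Indeed, writing $g_{m,n}=g+h_n$ with $h_n\to0$ locally uniformly, $|h_n|\le2\|g\|_\infty$, and using $\tfrac{n}{b_n}\sim\tfrac{n-m}{b_{n-m}}$, it suffices to note that $\tfrac{\Gamma(\alpha)(n-m)}{b_{n-m}}T_{n-m,g}v_{j,m}\to\bE[g((U_j)_{j=1}^d,W)]\int_Yv_{j,m}\,\rd\mu$ by Theorem~\ref{Thm:conv-of-density}, while $\tfrac{\Gamma(\alpha)(n-m)}{b_{n-m}}\|T_{n-m,h_n}v_{j,m}\|_\cB\to0$: truncating in the variable $w$ at a cutoff $\chi_M$, on $\{w\le M\}$ the factor $h_n$ is bounded by $\sup_{[0,1]^d\times[0,M]}|h_n|\to0$, while the complementary part is controlled, via the positivity of the transfer operator, by a constant times $\tfrac{\Gamma(\alpha)(n-m)}{b_{n-m}}\big\|T_{n-m,\,1-\chi_M}\,v_{j,m}\big\|_\cB$, whose $\limsup$ (again by Theorem~\ref{Thm:conv-of-density}, applied with the bounded continuous function $1-\chi_M$) is proportional to $\bE[(1-\chi_M)((U_j)_{j=1}^d,W)]\to0$ as $M\to\infty$, since $W<\infty$ a.s. Finally $\int_{X_{j,m}}u\,\rd\mu=\int_Yv_{j,m}\,\rd\mu$, and summing over all $(j,m)$ gives $\sum_{j,m}\int_{X_{j,m}}u\,\rd\mu=\int_Xu\,\rd\mu$.

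It remains to interchange $\lim_{n\to\infty}$ with $\sum_{j,m}$. From the rewriting above, the uniform bound $\|T_{N,1}\|_{\mathscr L(\cB)}=O(b_N/N)$ (the case $g\equiv1$ of Theorem~\ref{Thm:conv-of-density}, i.e.\ \cite[Theorem~1.4]{Gou11}), and positivity, one gets $\tfrac{\Gamma(\alpha)n}{b_n}\big\|1_YL^n[g(\cdots)u_{j,m}]\big\|_\cB\le C\|g\|_\infty\|v_{j,m}\|_\cB\cdot\tfrac{n\,b_{n-m}}{b_n(n-m)}$. For $m\le n/2$ the last factor is bounded (by the uniform convergence theorem for regular variation), so by the summability in \eqref{Thm:extend-observable-1} the tail $\sum_{(j,m)\notin F,\,m\le n/2}$ is $<\varepsilon$ uniformly in $n$ once $F$ is a large finite set. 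For $n/2<m<n$ the factor blows up like $n^{1-\alpha}\ell(n)$, and this is exactly where the decay $\|v_{j,m}\|_\cB=o(m^{-1})$ enters: putting $l=n-m$ and invoking Karamata's theorem ($\sum_{l\le N}b_l/l\sim b_N/\alpha$) together with the regular variation of $b$, one finds $\sum_{n/2<m<n}\tfrac{\Gamma(\alpha)n}{b_n}\big\|1_YL^n[g(\cdots)u_{j,m}]\big\|_\cB\le C\|g\|_\infty\big(\sup_{M\ge n/2}M\|v_{j,M}\|_\cB\big)\to0$, while the single term $m=n$ is immediate since there $g_{n,n}$ is constant and $\|v_{j,n}\|_\cB=o(n^{-1})$. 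Combining the three groups and letting $\varepsilon\to0$ gives the claim; this is the step modelled on \cite[Section~10]{MelTer}. The main obstacle is precisely this last step: the pieces $u_{j,m}$ with $m$ of order $n$ are delicate, because the normalisation $n/b_n$ is then weighed against $b_{n-m}/(n-m)$ with $n-m$ small, and only the hypothesis $\|L^mu_{j,m}\|_\cB=o(m^{-1})$, fed into a Karamata-type summation, forces their total contribution to vanish.
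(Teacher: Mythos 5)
Your proof is correct and follows essentially the same route as the paper's own: decompose $u=\sum_{j,m}u_{j,m}$, rewrite each piece as $T_{n-m,\,g_{m,n}}(L^m u_{j,m})$ exactly as in \eqref{Lem:extend-1-1}, handle the range $m\le \mathrm{const}\cdot n$ by summability and dominated convergence (the paper's Lemmas \ref{Lem:extend-1}--\ref{Lem:extend-2}), and kill the range where $m$ is comparable to $n$ by feeding $\|L^m u_{j,m}\|_{\cB}=o(m^{-1})$ into a Karamata summation (the paper's Lemma \ref{Lem:extend-3}). The only point to tighten is that, since $\|\cdot\|_{\cB}$ is a Lipschitz-type norm, ``positivity of the transfer operator'' does not literally yield $\|T_{N,h}v\|_{\cB}\le C\|T_{N,1-\chi_M}v\|_{\cB}$ for $|h|\le C(1-\chi_M)$, nor does $\|T_{N,1}\|_{\mathscr{L}(\cB)}=O(b_N/N)$ alone control $\|T_{N,h}\|_{\mathscr{L}(\cB)}$; these estimates should instead be routed through the sum of operator norms $\sum_{n_1+\dots+n_d+k=N}\|T_{n_1,\dots,n_d}(k)\|_{\mathscr{L}(\cB)}=O(b_N/N)$, which is exactly what the paper's proof of Theorem \ref{Thm:conv-of-density} provides and what its Lemmas \ref{Lem:extend-1} and \ref{Lem:extend-3} use.
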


Theorem \ref{Thm:extend-observable} will be proved in Section \ref{sec:proof2}.
Note that, for $A\in \cA\cap Y$,
\begin{equation}
	\int_A L^n
    \bigg[g\bigg(\bigg(\frac{S_{A_j}(n)}{n}\bigg)_{j=1}^d, \frac{S_Y(n)}{b_n}\bigg) u \bigg]\rd\mu
    =
    \int_{f^{-n}(A)}
    g\bigg(\bigg(\frac{S_{A_j}(n)}{n}\bigg)_{j=1}^d, \frac{S_Y(n)}{b_n}\bigg)u\: \rd\mu.
\end{equation}
The following corollary immediately follows from Theorem \ref{Thm:extend-observable} and the Portmanteau theorem \cite[Theorem 4.25]{Kal02}. 

\begin{Cor}[generalized uniform laws]\label{Cor:conditional}
Under the setting of Theorem \ref{Thm:extend-observable}, we additionally suppose $u\geq0$ and $\int_X u\: \rd \mu=1$.
Let $\nu(\rd x)=u(x)\mu(\rd x)$, which is an absolutely continuous probability measure on $X$. 
Let $A\in \cA\cap Y$ with $\mu[A]>0$.
Then
 it holds that
\begin{equation}
	\nu[f^{-n}(A)]\sim \frac{b_n}{\Gamma(\alpha)n}\mu[A]\sim
	\frac{\sin(\pi\alpha)}{\pi}\frac{1}{n^{1-\alpha}\ell(n)}\mu[A]
\end{equation}
and 
\begin{equation}\label{Cor:conditional-1}
\frac{1}{\nu[f^{-n}(A)]}
    \int_{f^{-n}(A)}  g\bigg(\bigg(\frac{S_{A_j}(n)}{n}\bigg)_{j=1}^d, \frac{S_{Y}(n)}{b_n}\bigg)\rd \nu
   \to
    \bE\Big[g\big((U_j)_{j=1}^d, W\big)\Big],
\end{equation}	
as $n\to\infty$. In addition, for any $t_1,\dots,t_d\in[0,1]$ and $t_{d+1}\geq0$, it holds that
\begin{equation}
\begin{split}\label{Cor:conditional-2}
    &\nu\bigg[\frac{S_{A_1}(n)}{n}\leq t_1,\dots,  \frac{S_{A_d}(n)}{n}\leq t_d,\; 
    \frac{S_{Y}(n)}{b_n}\leq t_{d+1}
    \:\bigg|\:f^{-n}(A)\bigg]
    \\[5pt]
    &\to
    \bP[U_1\leq t_1,\dots,U_d\leq t_d,\; W\leq t_{d+1}],	
\end{split}	
\end{equation}
as $n\to\infty$. Here $\nu[A|B]=\nu[A\cap B]/\nu[B]$ denotes the conditional probability of $A$ given $B$. 
\end{Cor}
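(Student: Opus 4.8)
The plan is to derive Corollary \ref{Cor:conditional} directly from Theorem \ref{Thm:extend-observable} applied twice, first to the test function $g$ and then to $g\equiv 1$, together with an invocation of the Portmanteau theorem. First I would record the identity noted in the excerpt that, for $A\in\cA\cap Y$,
\begin{equation*}
\int_{A} L^n\!\bigg[g\bigg(\bigg(\frac{S_{A_j}(n)}{n}\bigg)_{j=1}^d,\frac{S_Y(n)}{b_n}\bigg)u\bigg]\rd\mu
=\int_{f^{-n}(A)} g\bigg(\bigg(\frac{S_{A_j}(n)}{n}\bigg)_{j=1}^d,\frac{S_Y(n)}{b_n}\bigg)\rd\nu .
\end{equation*}
Since $A\in\cA\cap Y$, testing the $\cB$-convergence in Theorem \ref{Thm:extend-observable} against the (bounded, measurable, hence continuous-on-$\cB$ since $\|\cdot\|_{L^\infty}\le\|\cdot\|_\cB$) functional $v\mapsto\int_A v\,\rd\mu$ gives
\begin{equation*}
\frac{\Gamma(\alpha)n}{b_n}\int_{f^{-n}(A)} g\bigg(\bigg(\frac{S_{A_j}(n)}{n}\bigg)_{j=1}^d,\frac{S_Y(n)}{b_n}\bigg)\rd\nu
\longrightarrow \bE\big[g((U_j)_{j=1}^d,W)\big]\,\mu[A],
\end{equation*}
using $\int_X u\,\rd\mu=1$. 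Taking $g\equiv 1$ yields $\dfrac{\Gamma(\alpha)n}{b_n}\,\nu[f^{-n}(A)]\to\mu[A]$, which is the stated asymptotics for $\nu[f^{-n}(A)]$ once combined with $b_n/(\Gamma(\alpha)n)\sim \sin(\pi\alpha)/(\pi n^{1-\alpha}\ell(n))$; in particular $\nu[f^{-n}(A)]>0$ for all large $n$, so the conditional probability is well defined.

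Next I would take the ratio of the two displayed limits: dividing the $g$-statement by the $g\equiv 1$ statement, the normalizing factors $\Gamma(\alpha)n/b_n$ and the factor $\mu[A]$ cancel, leaving exactly \eqref{Cor:conditional-1},
\begin{equation*}
\frac{1}{\nu[f^{-n}(A)]}\int_{f^{-n}(A)} g\bigg(\bigg(\frac{S_{A_j}(n)}{n}\bigg)_{j=1}^d,\frac{S_Y(n)}{b_n}\bigg)\rd\nu
\longrightarrow \bE\big[g((U_j)_{j=1}^d,W)\big].
\end{equation*}
Rephrasing the left-hand side, this says that the law of $\big((S_{A_j}(n)/n)_{j=1}^d,\,S_Y(n)/b_n\big)$ under the conditional probability $\nu[\,\cdot\mid f^{-n}(A)]$ converges weakly, as $n\to\infty$, to the law of $\big((U_j)_{j=1}^d,W\big)$, since this holds for every bounded continuous $g$ on $[0,1]^d\times[0,\infty)$. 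Finally, the convergence in distribution \eqref{Cor:conditional-2} of the joint distribution functions at the points $(t_1,\dots,t_d,t_{d+1})$ follows from the Portmanteau theorem \cite[Theorem 4.25]{Kal02}; one should note only that the limiting law of $\big((U_j)_{j=1}^d,W\big)$ assigns zero mass to the boundary hyperplanes $\{U_i=t_i\}$ and $\{W=t_{d+1}\}$ (when $t_i\in(0,1)$, $t_{d+1}>0$), which holds because that law is absolutely continuous — e.g.\ $W=\eta^{-\alpha}$ and $U_i=\xi_i/\eta$ have densities by \eqref{def:gen-unif}, \eqref{def:psi_j} — and the edge cases $t_i\in\{0,1\}$, $t_{d+1}=0$ are handled separately using $\sum_j U_j=1$ a.s.\ and $W>0$ a.s.

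I do not expect any serious obstacle: the corollary is a formal consequence of Theorem \ref{Thm:extend-observable}. The only point requiring a small argument is the absolute continuity (more precisely, the absence of atoms on the relevant hyperplanes) of the Barlow--Pitman--Yor generalized uniform distribution needed to apply Portmanteau, and the bookkeeping of the two limits and their ratio; both are routine given Section \ref{sec:limit}.
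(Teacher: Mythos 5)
Your proposal is correct and follows exactly the route the paper intends: Corollary \ref{Cor:conditional} is stated there as an immediate consequence of Theorem \ref{Thm:extend-observable} (applied to $g$ and to $g\equiv 1$, then taking the ratio) together with the Portmanteau theorem, and your write-up simply makes this explicit, including the needed observation that the limit law puts no mass on the relevant hyperplanes.
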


%\begin{Rem}
%As shown in Lemma \ref{Thm:conv-of-density} (see also \cite[Theorem 1.4]{Gou11}), it holds that $\nu_n(T^{-n}Y)\underset{n\to\infty}{\sim}\frac{b(n)}{\Gamma(\alpha)n}$  and hence $\nu_n[\cdot|T^{-n}Y]$ is well-defined for sufficiently large $n$.	
%\end{Rem}
%
%Furthermore, we are interested in the weak limit of
%the conditional probability measure
%\begin{align}
%   \nu\bigg[\bigg(\frac{1}{n}S_{A_1}(nt),\dots,\frac{1}{n}S_{A_d}(nt), \frac{1}{b(n)}S_{Y}(nt):t\in[0,1]\bigg)\in\cdot\bigg|T^{-n}Y\bigg]\in \cP(D([0,1],\bR^{d+1})).	
%\end{align}
%We will prove that the limit is the joint-law of the occupation time and local time processes of a skew Bessel diffusion bridge. 

\section{Asymptotic behaviors of $T_{n_1,\dots,n_d}(k)$}
\label{sec:asymp}

In this section we prepare two types of estimates of $T_{n_1,\dots,n_d}(k)$, following \cite{AaDe, Gou11, MelTer}. These estimates are utilized in the proof of Theorem \ref{Thm:conv-of-density}.

\subsection{Local limit theorem and local large deviation for $T_{n_1,\dots,n_d}(k)$}

Set 
\begin{align}\label{def:a_k}
a_k=\min\{n\in\bN:b_n>k\}, \quad k\in \bN. 
\end{align}
Then $b_t\leq k$ if and only if $t< a_k$. 
The sequence $(a_k)_{k=1}^\infty$ is regularly varying with index $1/\alpha$ and 
\begin{equation}\label{eq:reg-var-ak}
\frac{a_k^\alpha}{\Gamma(1-\alpha)\ell(a_k)}\sim k,
\quad \text{as $k\to\infty$},
\end{equation}
which follows from the theory of regular variation \cite[Theorem 1.5.12]{BGT}. We remark that our definition of $a_k$ is asymptotically the same as that of \cite{Gou11} up to multiplication by a constant.

The following proposition is a certain multi-dimensional extension of \cite[Theorem 6.2]{AaDe} and \cite[Proposition 1.5]{Gou11}.

\begin{Prop}[multi-dimensional local limit theorem]\label{Prop:loc-lim}
Suppose that Assumptions \ref{asmp:dyn-sep}, \ref{asmp:reg-var} and \ref{asmp:ape-ren} are satisfied.  Then 
  \begin{align}
  	\sup_{n_1,\dots n_d\in \bZ_{\geq0}}\bigg\|a_k^d T_{n_1,\dots,n_d}(k) - 
  	\bigg(\prod_{j=1}^d \psi_j\Big(\frac{n_j}{a_k}\Big)\bigg)P\bigg\|_{\mathscr{L}(\cB)}
  	\to 0,
  	\quad\text{as $k\to\infty$,} 
  \end{align}
 where $\psi_j:[0,\infty)\to[0,\infty)$ denotes the continuous density function of the one-sided $\alpha$-stable random variable $\xi_j$ in Section \ref{sec:limit}.
\end{Prop}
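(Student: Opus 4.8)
The plan is to argue by Fourier inversion on the $d$-torus, in the spirit of \cite{AaDe, Gou11}. Since $T_{n_1,\dots,n_d}(k)$ is the coefficient of $z_1^{n_1}\cdots z_d^{n_d}$ in $R(z)^k$,
\begin{equation}
	T_{n_1,\dots,n_d}(k)=\frac{1}{(2\pi)^d}\int_{[-\pi,\pi]^d}e^{-i\sum_{j=1}^d n_j\theta_j}\,R\big(e^{i\theta_1},\dots,e^{i\theta_d}\big)^k\,\rd\theta_1\cdots\rd\theta_d,
\end{equation}
the integral converging in $\mathscr{L}(\cB)$ because $z\mapsto R(z)$ is operator-norm continuous on $\bar{\bD}^d$ (by Assumption \ref{asmp:ape-ren}\eqref{asmp:ape-ren-bdd} and $\sum_{j,n}r_{j,n}=1$). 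First I would fix a small neighbourhood $U$ of $0\in[-\pi,\pi]^d$ and split the integral accordingly. On $[-\pi,\pi]^d\setminus U$, Assumption \ref{asmp:ape-ren}\eqref{asmp:ape-ren-ape}, continuity and compactness furnish $k_0\in\bN$ and $\rho_0\in(0,1)$ with $\|R(e^{i\theta})^k\|_{\mathscr{L}(\cB)}\leq C\rho_0^{k/k_0}$ there; since $a_k$ is regularly varying of index $1/\alpha$, this part contributes $O(a_k^d\rho_0^{k/k_0})\to0$ to $a_k^d T_{n_1,\dots,n_d}(k)$, uniformly in $(n_1,\dots,n_d)$. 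On $U$ (shrunk if necessary), Assumption \ref{asmp:ape-ren}\eqref{asmp:ape-ren-spec} and perturbation theory for an isolated simple eigenvalue give $R(e^{i\theta})^k=\lambda(e^{i\theta})^k P(e^{i\theta})+N(e^{i\theta})^k$ with $\lambda(1)=1$, $P(1)=P$ and $\sup_{\theta\in U}\|N(e^{i\theta})^k\|_{\mathscr{L}(\cB)}\leq C\kappa^k$ for some $\kappa\in(0,1)$; the $N$-term again contributes $o(1)$ uniformly in $n$.

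The heart of the matter is the behaviour of $\lambda$ near $1$. Writing $\rho(z)=r_{0,0}+\sum_{j,n}z_j^n r_{j,n}$ and using $\int_Y R_{j,m}u\,\rd\mu=\int_Y 1_{\{S_{A_j}(\varphi)=m\}}u\,\rd\mu$ together with the first-order perturbation formula based on the right/left eigendata $1$ and $\mu$ of $R$ (so that $\int_Y R(z)1\,\rd\mu=\rho(z)$), one gets $\lambda(e^{i\theta})=\rho(e^{i\theta})+O\big(\|R(e^{i\theta})-R\|_{\mathscr{L}(\cB)}^2\big)$, hence
\begin{equation}
	1-\lambda(e^{i\theta})=\sum_{j=1}^d\sum_{n\geq1}\big(1-e^{in\theta_j}\big)r_{j,n}+O\big(\|R(e^{i\theta})-R\|_{\mathscr{L}(\cB)}^2\big).
\end{equation}
By Assumption \ref{asmp:reg-var}\eqref{asmp:reg-var-1} and an Abelian theorem for regularly varying tails (\cite[Ch.\ XIII]{Fel71}, \cite{BGT}), $\sum_{n\geq1}(1-e^{in\theta_j})r_{j,n}\sim\beta_j\Gamma(1-\alpha)|\theta_j|^\alpha\ell(1/|\theta_j|)\big(\cos(\pi\alpha/2)-i\sgn(\theta_j)\sin(\pi\alpha/2)\big)$ as $\theta_j\to0$, whereas the bound $r_{j,n}=O(n^{-\alpha-1}\ell(n))$ yields $\|R(e^{i\theta})-R\|_{\mathscr{L}(\cB)}=O\big(\sum_j|\theta_j|^\alpha\ell(1/|\theta_j|)\big)$, so the error is of strictly smaller order than the main term. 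Putting $\theta_j=s_j/a_k$ and using $a_k^\alpha/(\Gamma(1-\alpha)\ell(a_k))\sim k$ from \eqref{eq:reg-var-ak} together with the slow variation of $\ell$, this gives, for each fixed $s$,
\begin{equation}
	\lambda\big(e^{is/a_k}\big)^k\longrightarrow\exp\bigg[\!-\sum_{j=1}^d\beta_j|s_j|^\alpha\Big(\cos\tfrac{\pi\alpha}{2}-i\sgn(s_j)\sin\tfrac{\pi\alpha}{2}\Big)\bigg]=\prod_{j=1}^d\bE\big[e^{is_j\xi_j}\big]
\end{equation}
by \eqref{eq:Fourier-xi}.

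To conclude, I would change variables $\theta_j=s_j/a_k$ in the $U$-integral, so that the domain becomes $a_kU\uparrow\bR^d$, the Jacobian absorbs the prefactor $a_k^d$, and the phase becomes $e^{-i\sum_j(n_j/a_k)s_j}$ of modulus $1$ --- this is what produces uniformity in $(n_1,\dots,n_d)$. Replacing $P(e^{is/a_k})$ by $P$ costs an error controlled by $\|P(e^{is/a_k})-P\|_{\mathscr{L}(\cB)}=O(\sum_j|s_j|^\alpha/k)$ on compact $s$-sets and by boundedness elsewhere, against a uniform bound $|\lambda(e^{is/a_k})^k|\leq h(s)$ with $h$ integrable on $\bR^d$ --- obtained from $\Re(1-\rho(e^{i\theta}))\geq c\sum_j|\theta_j|^\alpha\ell(1/|\theta_j|)$ and Potter's bounds for $\ell$. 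Dominated convergence (integrand extended by $0$ outside $a_kU$, dominated by $h(s)+|\prod_j\bE[e^{is_j\xi_j}]|$) then shows $a_k^d T_{n_1,\dots,n_d}(k)-\big(\tfrac{1}{(2\pi)^d}\int_{\bR^d}e^{-i\sum_j(n_j/a_k)s_j}\prod_j\bE[e^{is_j\xi_j}]\,\rd s\big)P\to0$ in $\mathscr{L}(\cB)$, uniformly in $(n_1,\dots,n_d)$ (the two resulting error integrals do not involve $n$, only $|{\rm phase}|=1$). By independence of $\xi_1,\dots,\xi_d$ and the Fourier inversion formula \eqref{def:psi_j}, the inner integral equals $\prod_j\psi_j(n_j/a_k)$, which is the assertion.

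I expect the main obstacle to be the uniform domination needed to justify dominated convergence on the expanding domains $a_kU$ --- i.e.\ producing a single integrable majorant $h(s)$ for $|\lambda(e^{is/a_k})^k|$ valid for all large $k$, which forces one to combine the Abelian theorem for $1-\rho(e^{i\theta})$ with Potter-type estimates controlling $\ell(a_k/|s_j|)$ relative to $\ell(a_k)$ uniformly over the region, and the companion fact that the perturbative discrepancy $\lambda-\rho$ is genuinely of lower order. The passage from one to $d$ torus variables is otherwise routine, since $1-\rho(e^{i\theta})$ splits exactly as the sum $\sum_{j}\sum_n(1-e^{in\theta_j})r_{j,n}$ of one-variable contributions whose leading terms add.
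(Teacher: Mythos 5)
Your proposal is correct and follows essentially the same route as the paper's proof: Fourier inversion on the torus, exponential decay of $\|R(z)^k\|$ away from the origin via aperiodicity, the first-order perturbation expansion of $1-\lambda$ combined with the Abelian theorem for the regularly varying tails $r_{j,n}$ to get the pointwise limit $\lambda(z_{s/a_k})^k\to\prod_j\bE[e^{is_j\xi_j}]$, and a Potter-bound majorant justifying dominated convergence on the expanding domain (the paper's Lemmas \ref{Lem:spec-gap} and \ref{Lem:spec-gap-2}). The "main obstacle" you flag is exactly what the paper's Lemma \ref{Lem:spec-gap-2} resolves, via the bound $k\Re(1-\lambda(z_{s/a_k}))\geq c\sum_j|s_j|^{\alpha/2}$.
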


For the proof of Proposition \ref{Prop:loc-lim}, see Section  \ref{subsec:Proof-LLT}.

By \cite[Theorem 1.6]{Gou11}, we can get a local large deviation for
\begin{align*}
 R^k\Big(1_{\textstyle\{\varphi_k=n\}}u\Big)=\sum_{\substack{n_1,\dots,n_d\\ n_1+\dots+n_d+k=n}}T_{n_1,\dots,n_d}(k)u,
\quad u\in\cB. 
\end{align*}
Moreover we can obtain the following proposition as a slight extension.

\begin{Prop}[multi-dimensional local large deviation]\label{Prop:large-deviation}
	Suppose that Assumptions \ref{asmp:dyn-sep}, \ref{asmp:reg-var} and \ref{asmp:ape-ren} are satisfied. Then there exists some constant $C>0$ such that, for any $n,k\in\bN$ with  $n\geq a_k$ (or equivalently $b_n> k$),
\begin{align}
   \sum_{\substack{n_1,\dots,n_d\\ n_1+\dots+n_d+k=n}}\|T_{n_1,\dots,n_d}(k)\|_{\mathscr{L}(\cB)}\leq   Ckn^{-\alpha-1}\ell(n).	
\end{align}
\end{Prop}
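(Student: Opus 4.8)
The plan is to reduce the multi-dimensional estimate to the one-dimensional local large deviation of Gou\"ezel \cite[Theorem 1.6]{Gou11}, applied to the operator-valued renewal sequence generated by $R(z)$. Observe first that
\[
   \sum_{\substack{n_1,\dots,n_d\\ n_1+\dots+n_d+k=n}}T_{n_1,\dots,n_d}(k)
   =
   R^k\Big(1_{\{\varphi_k=n\}}\,\cdot\,\Big),
\]
so the sum we must bound is exactly the coefficient of $z^n$ (with all $z_j$ identified to a single $z$) in the scalar-specialized power series $\sum_k R(z,\dots,z)^k$. Since $R(z,\dots,z)=\sum_m z^m R_m$ where $R_m u = R(1_{\{\varphi=m\}}u)$, and $\|R_m\|_{\mathscr L(\cB)}\le C\,\mu[Y\cap\{\varphi=m\}] = O(m^{-\alpha-1}\ell(m))$ by Assumption~\ref{asmp:ape-ren}\eqref{asmp:ape-ren-bdd} together with \eqref{eq:varphi_j}, the scalar-specialization satisfies precisely the hypotheses under which \cite[Theorem 1.6]{Gou11} gives a constant $C>0$ with $\|R^k(1_{\{\varphi_k=n\}}\,\cdot\,)\|_{\mathscr L(\cB)}\le C k n^{-\alpha-1}\ell(n)$ for all $n\ge a_k$. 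The only discrepancy between Gou\"ezel's setup and ours is the normalization of $a_k$, which by the remark after \eqref{def:a_k} differs only by a bounded multiplicative factor and is absorbed into $C$ after adjusting $n\ge a_k$ to $n\ge c\,a_k$ via regular variation.

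However, this reduction controls only the \emph{sum over all splittings}, i.e.\ $\|\sum T_{n_1,\dots,n_d}(k)\|_{\mathscr L(\cB)}$, not $\sum\|T_{n_1,\dots,n_d}(k)\|_{\mathscr L(\cB)}$; since the operators need not have comparable signs or norms, the triangle inequality goes the wrong way. To get the sum of norms I would instead track the decomposition coming directly from the return-time structure: write
\[
   T_{n_1,\dots,n_d}(k)u
   =
   \sum R\Big(1_{\{S_{A_{i_1}}(\varphi)=m_1\}}R\big(\cdots R(1_{\{S_{A_{i_k}}(\varphi)=m_k\}}u)\cdots\big)\Big),
\]
the outer sum ranging over all $(i_1,m_1),\dots,(i_k,m_k)$ with the $m$'s summing by colour to $(n_1,\dots,n_d)$ and $\sum(m_\ell+1)=n$. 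Each inner factor $1_{\{S_{A_{i}}(\varphi)=m\}}$ picks out a set of measure $r_{i,m}$ (or, for $m=0$, the set $\{\varphi=1\}$), and Assumption~\ref{asmp:ape-ren}\eqref{asmp:ape-ren-bdd} bounds the corresponding operator norm by $C r_{i,m}$. Hence
\[
   \sum_{n_1+\dots+n_d+k=n}\|T_{n_1,\dots,n_d}(k)\|_{\mathscr L(\cB)}
   \le
   C^{k}\!\!\sum_{(m_1+1)+\dots+(m_k+1)=n}\;\prod_{\ell=1}^{k}\Big(\textstyle\sum_{i=1}^d r_{i,m_\ell}\Big),
\]
and the right-hand side is exactly $C^k\,\hat u_k(n)$, the $n$-th coefficient of the scalar renewal series $\big(\sum_m u_m z^m\big)^k$ with $u_m=\sum_i r_{i,m}$ for $m\ge1$ and $u_0=r_{0,0}$. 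This reduces the problem to a purely probabilistic one-dimensional statement: $u_\bullet$ is a probability sequence (since $\sum_{j,n}r_{j,n}=1$) with $\sum_{m\ge n}u_m\sim n^{-\alpha}\ell(n)$ and $u_n=O(n^{-\alpha-1}\ell(n))$ by Assumption~\ref{asmp:reg-var}, so $u_\bullet^{*k}(n)\le C' k n^{-\alpha-1}\ell(n)$ for $n\ge a_k$ by the classical large-deviation estimate for subexponential renewal sequences (this is the scalar content behind \cite[Theorem 1.6]{Gou11}, but here it is needed in its elementary probabilistic form, e.g.\ as in \cite{AaDe}). Controlling the geometric prefactor $C^k$ is harmless because in the regime $n\ge a_k$ one has $k = O(n^{\alpha}/\ell(n))$, which is $o(n)$, and the estimate is combined with the exponentially small spectral-gap contribution from Assumption~\ref{asmp:ape-ren}\eqref{asmp:ape-ren-spec}; more precisely I would split the sum over $\ell$ according to whether $R$ acts through its spectral projection $P$ or through the complementary part with spectral radius $<1$, absorbing the $C^k$ against the geometric decay exactly as in the proof of \cite[Theorem 1.6]{Gou11}.

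The main obstacle, then, is not the multi-dimensionality — colours only enter through the trivial bound $\sum_i r_{i,m_\ell}$ and are otherwise invisible — but making the passage from the operator norm $\|R_{j,n}\|_{\mathscr L(\cB)}\le C r_{j,n}$ to a clean $C^k$-free bound uniform in $k$. I expect to handle this by following Gou\"ezel's argument verbatim with $r_{j,n}$ replaced by $\sum_i r_{i,n}$ throughout: his proof already separates the ``good'' returns (where the renewal structure gives polynomial decay) from the spectral-gap-controlled corrections, and the constant he obtains is uniform precisely because the operator norms are dominated by a summable-tailed scalar sequence. The remaining bookkeeping is routine regular-variation manipulation (Potter bounds, \cite[Theorem 1.5.12]{BGT}) to convert between $a_k$, $b_n$ and $n^{-\alpha-1}\ell(n)$.
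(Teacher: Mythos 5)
You correctly identify the key difficulty --- that the statement concerns $\sum\|T_{n_1,\dots,n_d}(k)\|_{\mathscr{L}(\cB)}$ rather than $\|\sum T_{n_1,\dots,n_d}(k)\|_{\mathscr{L}(\cB)}$, so a naive appeal to the scalar renewal estimate fails --- but the mechanism you then propose has a genuine gap. Expanding $T_{n_1,\dots,n_d}(k)$ into a composition of $k$ factors and bounding each factor by $\|R_{j,m}\|_{\mathscr{L}(\cB)}\leq Cr_{j,m}$ produces the prefactor $C^k$, and this cannot be absorbed: in the regime $n\geq a_k$ the index $k$ may be as large as $b_n\sim n^{\alpha}/(\Gamma(1-\alpha)\ell(n))$, so $C^k$ grows like $\exp(cn^{\alpha})$ and swamps the target bound $kn^{-\alpha-1}\ell(n)$. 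The suggestion to cancel $C^k$ against the spectral gap does not work either, because the decomposition $R=P+Q$ with $\|Q^k\|=O(\rho^k)$ does not interact with the splitting of $R$ into the individual pieces $R_{j,m}$: the geometric decay is a property of powers of the \emph{whole} operator restricted off the eigenprojection, not of products of the individual summands, and the leading term $\lambda(z)^kP(z)$ carries no decay at all.

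The paper avoids this by never multiplying $k$ individual constants. It introduces the quantity $\|R^{(s)}(z)^k\|_A=\sum_{n_1,\dots,n_d}\|T^{(s)}_{n_1,\dots,n_d}(k)\|_{\mathscr{L}(\cB)}$, which is submultiplicative in $k$, and proves by a spectral (not term-by-term) argument that $\|R^{(s)}(z)^k\|_A\leq C$ uniformly in $k$ for truncation levels $s\geq a_k/2$ (Lemma \ref{Lem:bound-A-norm}), together with the pointwise truncated bound $\|T^{(s)}_{n_1,\dots,n_d}(k)\|_{\mathscr{L}(\cB)}\leq Ce^{-(n_1+\dots+n_d)/s}a_k^{-d}$ (Lemma \ref{Lem:truncated-local-lim}). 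The set $\{\varphi_k=n\}$ is then decomposed according to the number and size of exceptionally large individual returns ($\Omega_0,\dots,\Omega_3$); only the one or two distinguished large-return factors are estimated via $\|R_{j,l}\|_{\mathscr{L}(\cB)}\leq Cr_{j,l}=O(l^{-\alpha-1}\ell(l))$, so the constant $C$ enters a bounded number of times, and all remaining blocks are controlled by the uniform $A$-norm bound. Your closing remark that one should ``follow Gou\"ezel's argument verbatim'' points at the right proof, but it is a different proof from the $C^k\,u^{*k}(n)$ factorization you actually write down, and the latter does not close.
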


For the proof of Proposition \ref{Prop:large-deviation}, see Section \ref{subsec:Proof-LLD}.

%We omit the proof of Proposition \ref{Prop:large-deviation} since it is almost the same as that of \cite[Theorem 1.6]{Gou11}.
%This kind of estimates is analogous to a one-sided local large deviation for i.i.d.\:partial sums \cite{Don97}.  

\subsection{Proof of Proposition \ref{Prop:loc-lim}}
\label{subsec:Proof-LLT}

For the proof of Proposition \ref{Prop:loc-lim}, we begin with auxiliary results derived from the perturbation theory  \cite{Kat}.
For $z\in\bar{\bD}^d$ close to $(1,\dots,1)$, the spectrum of $R(z)$ consists of an isolated simple eigenvalue $\lambda(z)\in\bC$ close to $1$ and a compact subset of $\bD$.  
To be specific, $R(z)$ can be decomposed as
\begin{align}\label{eq:spec-deco}
	R(z)=\lambda(z)P(z)+Q(z),
	\quad
	\text{for $z\in\bar{\bD}^d$ close to $(1,\dots,1)$,}
\end{align}
where the map $z\mapsto (\lambda(z), P(z), Q(z))$ is continuous, $\lambda(z)\in \bC$ with $\lambda(1,\dots,1)=1$,  the operator $P(z)\in\mathscr{L}(\cB)$ denotes a one-dimensional projection with $P(1,\dots,1)=P$, the spectral radius of $Q(z)\in \mathscr{L}(\cB)$ is less than some constant $\rho\in(0,1)$ uniformly in $z$ close to $(1,\dots,1)$, and 
\begin{equation}
P(z)Q(z)=Q(z)P(z)=0.
\end{equation}
Therefore 
\begin{equation}
\|R(z)^k-\lambda(z)^k P(z)\|_{\mathscr{L}(\cB)}=\|Q(z)^k\|_{\mathscr{L}(\cB)}=O(\rho^k)=o(1),
\quad\text{as $k\to\infty$,}
\end{equation}
 uniformly in $z$ close to $(1,\dots,1)$.    For $t=(t_1,\dots,t_d)\in \bR^d$, set 
\begin{align}
z_t = (e^{it_1},\dots,e^{it_d}).
\end{align}
%The following lemma is a modification of \cite[Theorem 5.1]{AaDe} and \cite[Lemma 3.1 (a)]{MelTer}.

\begin{Lem}\label{Lem:spec-gap}
Let $s=(s_1,\dots,s_d)\in\bR^d$ be fixed. Then, as $k\to\infty$,  
\begin{align}\label{Lem:spec-gap-1}
\lambda(z_{s/a_k})^k \to \prod_{j=1}^d\bE\big[e^{i s_j \xi_j}\big], \;\;\text{in $\bC$,}
\quad 
R(z_{s/a_k})^k \to  \Big(\prod_{j=1}^d\bE\big[e^{i s_j \xi_j}\big]\Big)P,
\;\;\text{in $\mathscr{L}(\cB)$}.
\end{align}
Here it is understood that $s/a_k=(s_1/a_k,\dots,s_d/a_k)$.
\end{Lem}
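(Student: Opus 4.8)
The strategy is to combine the spectral decomposition \eqref{eq:spec-deco} with the asymptotics of the leading eigenvalue $\lambda(z)$ near $(1,\dots,1)$, which must be extracted from the regular-variation Assumption \ref{asmp:reg-var}. First I would observe that, since $z_{s/a_k}\to(1,\dots,1)$ as $k\to\infty$ (because $a_k\to\infty$), the decomposition \eqref{eq:spec-deco} applies for $k$ large, giving $R(z_{s/a_k})^k=\lambda(z_{s/a_k})^k P(z_{s/a_k})+Q(z_{s/a_k})^k$ with $\|Q(z_{s/a_k})^k\|_{\mathscr{L}(\cB)}=O(\rho^k)\to0$ and $P(z_{s/a_k})\to P$ in $\mathscr{L}(\cB)$ by continuity. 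Hence the second convergence in \eqref{Lem:spec-gap-1} follows from the first once we know $\lambda(z_{s/a_k})^k$ converges to the stated limit; so the real content is the scalar statement.

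For the scalar asymptotics, the key is a Darboux/Karamata-type expansion of $\lambda(z)$ as $z\to(1,\dots,1)$ along the torus directions. Applying the eigenprojection $P$ and using $PR_{j,n}P=r_{j,n}P$ together with $PR_{0,0}P=r_{0,0}P$, one gets $P R(z)P=\big(r_{0,0}+\sum_{j,n}z_j^n r_{j,n}\big)P$, and a standard perturbative computation (as in \cite{AaDe,Gou11}) shows
\begin{equation}
1-\lambda(z_t)=\sum_{j=1}^d\Big(\sum_{n\geq1}(1-e^{int_j})r_{j,n}\Big)+o\Big(\sum_{j=1}^d\Big|\sum_{n\geq1}(1-e^{int_j})r_{j,n}\Big|\Big),
\end{equation}
as $t\to0$; the error control here uses $\|R_{j,n}\|_{\mathscr{L}(\cB)}\leq Cr_{j,n}$ from Assumption \ref{asmp:ape-ren}\eqref{asmp:ape-ren-bdd} and the fact that $P(z)Q(z)=0$. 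Then, writing $t_j=s_j/a_k$ and invoking the regular-variation tail $\sum_{m\geq n}r_{j,m}\sim\beta_j n^{-\alpha}\ell(n)$ from \eqref{asmp:reg-var-1}, a Karamata/Abelian argument (e.g. \cite[Ch.\ XVII]{Fel71} or \cite[\S1.7, 8.1]{BGT}, summation by parts against $1-e^{ins_j/a_k}$) yields
\begin{equation}
\sum_{n\geq1}(1-e^{ins_j/a_k})\,r_{j,n}\sim \beta_j\,\Gamma(1-\alpha)\,\ell(a_k)\,a_k^{-\alpha}\,\big(-is_j\big)^{\alpha}\cdot c_\alpha,
\end{equation}
with the constant arranged, via \eqref{eq:reg-var-ak} i.e.\ $a_k^\alpha/(\Gamma(1-\alpha)\ell(a_k))\sim k$, so that $k\big(1-\lambda(z_{s/a_k})\big)\to \sum_{j=1}^d\beta_j|s_j|^\alpha\big(\cos\tfrac{\pi\alpha}{2}-i\sgn(s_j)\sin\tfrac{\pi\alpha}{2}\big)$. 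Exponentiating, $\lambda(z_{s/a_k})^k=\big(1-(1-\lambda(z_{s/a_k}))\big)^k\to\exp\!\big(-\sum_j\beta_j|s_j|^\alpha(\cos\tfrac{\pi\alpha}{2}-i\sgn(s_j)\sin\tfrac{\pi\alpha}{2})\big)=\prod_{j=1}^d\bE[e^{is_j\xi_j}]$ by \eqref{eq:Fourier-xi}, which is exactly the claimed limit.

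The main obstacle is the Abelian/Tauberian step: getting the precise complex constant $(-is)^\alpha$ (equivalently the $\cos\tfrac{\pi\alpha}{2}$/$\sin\tfrac{\pi\alpha}{2}$ split) from the one-sided regular-variation hypothesis on the tail sums $\sum_{m\geq n}r_{j,m}$, uniformly enough that it plugs into the perturbation expansion of $\lambda(z)$ rather than just into $PR(z)P$. This is handled by summation by parts to convert the tail asymptotics into the asymptotics of $\sum_n(1-e^{ins_j/a_k})r_{j,n}$, followed by a dominated-convergence / uniform-convergence argument for slowly varying functions (Potter bounds) to pass the limit inside; the bound $\|R_{j,n}\|_{\mathscr{L}(\cB)}\le Cr_{j,n}$ is what upgrades the scalar estimate to the operator estimate. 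Everything else — the continuity of $P(z)$, the geometric decay of $Q(z)^k$, and the final exponentiation — is routine given \eqref{eq:spec-deco}.
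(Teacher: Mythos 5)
Your proposal is correct and follows essentially the same route as the paper: spectral decomposition \eqref{eq:spec-deco} reducing the operator statement to the scalar one, the perturbative expansion $1-\lambda(z_t)=\sum_{j,n}(1-e^{int_j})r_{j,n}+O(\|R-R(z_t)\|_{\mathscr{L}(\cB)}^2)$ with $PR_{j,n}P=r_{j,n}P$, an Abelian/Karamata argument (the paper quotes \cite[Lemma 2.6.1]{IbrLin}) giving $\sum_n(1-e^{inu})r_{j,n}\sim\Gamma(1-\alpha)(-i)^\alpha\beta_j u^\alpha\ell(u^{-1})$, and then \eqref{eq:reg-var-ak} plus exponentiation. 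The only blemish is a redundant factor $c_\alpha$ in your intermediate asymptotic for $\sum_n(1-e^{ins_j/a_k})r_{j,n}$ (the constant $\Gamma(1-\alpha)(-is_j)^\alpha$ already is $c_\alpha s_j^\alpha$), but since you land on the correct limit $k(1-\lambda(z_{s/a_k}))\to\sum_j\beta_j|s_j|^\alpha(\cos\tfrac{\pi\alpha}{2}-i\sgn(s_j)\sin\tfrac{\pi\alpha}{2})$ this is only a cosmetic slip.
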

\begin{proof}
By \cite[Theorem 1.6]{Klo} or a similar discussion as in \cite[VIII, \S2]{Kat}, we have the following asymptotic expansion: as $t\to(0,\dots,0)$,
\begin{align*}
  1-\lambda(z_t) 
  &= 
  \text{\Big(a unique eigenvalue of $P(R-R(z_t))P$\Big)} + O\Big(\|R-R(z_t)\|_{\mathscr{L}(\cB)}^2\Big)
  \\
  &= \sum_{\substack{j=1,\dots,d\\ n\geq1}} (1-e^{int_j})r_{j,{n}} + O\bigg(\bigg\|\sum_{\substack{j=1,\dots,d\\ n\geq1}}(1-e^{int_j})R_{j,n}\bigg\|_{\mathscr{L}(\cB)}^2\bigg).
\end{align*}
Here we used $PR_{j,n}P=r_{j,n}P$. Set 
\begin{align*}
c_\alpha
&=-i\int_0^\infty e^{i\sigma}\sigma^{-\alpha}\rd \sigma
=\Gamma(1-\alpha)(-i)^{\alpha}
\\
&= \Gamma(1-\alpha)
   \bigg(\cos\bigg(\frac{\pi\alpha}{2}\bigg)-i\sin\bigg(\frac{\pi\alpha}{2}\bigg)\bigg).
\end{align*}
 By Assumption \ref{asmp:reg-var} and \cite[Lemma 2.6.1]{IbrLin}, we see that
 \begin{align*}
 	 \sum_{n\geq1}(1-e^{inu})r_{j,n}\sim 
c_\alpha \beta_j u^\alpha \ell(u^{-1}), \quad\text{as $u\to 0+$}.
 \end{align*}
In a similar way, Assumption \ref{asmp:ape-ren} implies  
\begin{align*}
	\|\sum_{n\geq1}(1-e^{inu})R_{j,n}\|_{\mathscr{L}(\cB)}=O(u^\alpha\ell(u^{-1})), \quad\text{as $u\to 0+$}.
\end{align*}
For $s=(s_1,\dots,s_d)\in(0,\infty)^d$, we use \eqref{eq:reg-var-ak} to get
\begin{align*}
   k(1-\lambda(z_{s/a_k}))
   \sim
   k c_\alpha
   \sum_{j=1}^d
   \beta_j (s_j/a_k)^\alpha \ell(a_k/s_j)
   \to \frac{c_\alpha}{\Gamma(1-\alpha)} \sum_{j=1}^d  s_j^\alpha \beta_j,
   \quad \text{as $k\to\infty$},
\end{align*}
which implies 
\begin{align*}
   \lambda(z_{s/a_k})^k
   \to
   \exp\bigg(\frac{-c_\alpha}{\Gamma(1-\alpha)} \sum_{j=1}^d s_j^\alpha \beta_j \bigg)	
   = 
   \prod_{j=1}^d\bE\big[e^{i s_j \xi_j}\big], \quad\text{as $k\to\infty$}.
\end{align*}
Since $\|R(z_{s/a_k})^k-\lambda(z_{s/a_k})^k P\|_{\mathscr{L}(\cB)}\to 0$, we obtain \eqref{Lem:spec-gap-1} in the case $s\in(0,\infty)^d$. We can also deal with the case  $s\notin(0,\infty)^d$ in a similar way.  
\end{proof}

\begin{Lem}\label{Lem:spec-gap-2}
There exist some constants $\delta,C,c>0$ such that, for any $k\geq1$ and $s=(s_1,\dots,s_d)\in(-\delta a_k, \delta a_k)^d$,
\begin{align}
	|\lambda(z_{s/a_k})|^k
	\leq 
	C\prod_{j=1}^d e^{-c|s_j|^{\alpha/2}},
	\quad
	\|R(z_{s/a_k})^k\|_{\mathscr{L}(\cB)}
	\leq 
	C\prod_{j=1}^d e^{-c|s_j|^{\alpha/2}}.
\end{align}	
\end{Lem}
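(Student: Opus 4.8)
The plan is to reduce everything to a single uniform estimate on the eigenvalue $\lambda(z_t)$ for $t$ in a small cube around the origin, and then exponentiate. First I would recall from the perturbation expansion in the proof of Lemma~4.7 that, for $|t_j|$ small, $1-\lambda(z_t)$ equals the unique eigenvalue of $P(R-R(z_t))P$ up to an $O(\|R-R(z_t)\|_{\mathscr{L}(\cB)}^2)$ error, and that this eigenvalue is $\sum_{j,n}(1-e^{int_j})r_{j,n}$. Taking real parts, $\Re\sum_{n\geq1}(1-e^{int_j})r_{j,n}=\sum_{n\geq1}(1-\cos(nt_j))r_{j,n}$, which is non-negative; combined with Assumption~\ref{asmp:reg-var} and a Karamata/Abelian argument (as in \cite[Lemma 2.6.1]{IbrLin}) one gets, for each $j$, a lower bound of the form $\Re\sum_{n\geq1}(1-e^{int_j})r_{j,n}\geq c_0|t_j|^\alpha\ell(|t_j|^{-1})$ for $|t_j|$ small, where $c_0>0$ is a constant (coming from $\cos(\pi\alpha/2)>0$ and $\beta_j$ bounded below). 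The quadratic error term is $O((\sum_j|t_j|^\alpha\ell(|t_j|^{-1}))^2)$, which is dominated by the linear term once $|t_j|$ is small enough, so shrinking $\delta$ absorbs it and yields $\Re(1-\lambda(z_t))\geq \tfrac{c_0}{2}\sum_{j=1}^d |t_j|^\alpha\ell(|t_j|^{-1})$ on the cube $(-\delta,\delta)^d$.

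Next I would substitute $t=s/a_k$ with $s\in(-\delta a_k,\delta a_k)^d$. Using the regular variation of $\ell$, uniformly over such $s$ one has $a_k^{-\alpha}\ell(a_k/|s_j|)\,k \gtrsim$ (a constant times) $1$ by \eqref{eq:reg-var-ak}, provided $|s_j|$ stays in a bounded range; for the full range $1\le |s_j|<\delta a_k$ I would use the uniform convergence theorem for slowly varying functions (Potter's bounds, \cite[Theorem 1.5.6]{BGT}) to get $k\,a_k^{-\alpha}\ell(a_k/|s_j|)\geq c_1|s_j|^{-\varepsilon'}$... — more precisely, one shows $k\,\Re(1-\lambda(z_{s/a_k}))\geq c_1\sum_{j=1}^d|s_j|^{\alpha/2}$ for all $k$ and all $s$ in the cube, where the exponent is lowered from $\alpha$ to $\alpha/2$ precisely to absorb the slowly varying factor via Potter's bounds (for large $|s_j|$, $|s_j|^{\alpha}\ell(a_k/|s_j|)/\ell(a_k)\gtrsim |s_j|^{\alpha/2}$). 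Then, since $|\lambda(z_{s/a_k})|\le 1$ (as $R(z)$ has spectral radius $\le1$ on $\bar\bD^d$) and $\Re(1-\lambda)\le |1-\lambda|$ together with $|\lambda|\leq e^{-\Re(1-\lambda)}$ wherever $\Re(1-\lambda)\ge 0$ and $|\lambda|$ is near $1$ — more carefully, I would write $|\lambda(z_t)|^2 = |1-(1-\lambda(z_t))|^2 = 1 - 2\Re(1-\lambda(z_t)) + |1-\lambda(z_t)|^2 \leq 1 - \Re(1-\lambda(z_t))$ once $|1-\lambda(z_t)|$ is small (shrink $\delta$ again), hence $|\lambda(z_{s/a_k})|^k\leq (1-k^{-1}\cdot k\Re(1-\lambda(z_{s/a_k})))^{k/1}\leq \exp(-k\Re(1-\lambda(z_{s/a_k})))\leq C\prod_j e^{-c|s_j|^{\alpha/2}}$, absorbing boundary constants into $C$.

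For the operator-norm bound $\|R(z_{s/a_k})^k\|_{\mathscr{L}(\cB)}\le C\prod_j e^{-c|s_j|^{\alpha/2}}$, I would split into two regimes. Where $s/a_k$ lies in the small neighborhood of the origin on which the spectral decomposition \eqref{eq:spec-deco} holds, write $R(z)^k=\lambda(z)^kP(z)+Q(z)^k$; the $P(z)$ term is bounded by $\sup\|P(z)\|\cdot|\lambda(z_{s/a_k})|^k$, which is handled by the scalar estimate just proved, while $\|Q(z)^k\|=O(\rho^k)$ decays faster than any of the claimed bounds and is swallowed by enlarging $C$. Outside that neighborhood but still inside $(-\delta a_k,\delta a_k)^d$ — i.e. $s/a_k$ bounded away from $0$ — I would invoke a compactness/continuity argument together with the aperiodicity hypothesis Assumption~\ref{asmp:ape-ren}\eqref{asmp:ape-ren-ape}: on the compact set of such $z$, the spectral radius of $R(z)$ is $<1$, hence $\|R(z)^k\|_{\mathscr{L}(\cB)}\le C'\theta^k$ for some $\theta<1$ uniformly, and since there $\prod_j e^{-c|s_j|^{\alpha/2}}$ is bounded below by a fixed positive quantity on each fixed-$k$ slice but we need it for all $k$ — here one uses that $|s_j|\le \delta a_k$ forces the product to be at least $e^{-cd(\delta a_k)^{\alpha/2}}$, which decays only polynomially in $k$ while $\theta^k$ decays exponentially, so the bound holds for all large $k$ and, adjusting $C$, for all $k$. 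The main obstacle is the second regime bookkeeping: getting a single pair of constants $C,c$ that works simultaneously for the perturbative near-origin regime and the aperiodic-far regime, uniformly in $k$, and in particular verifying that the exponent $\alpha/2$ (rather than $\alpha$) is genuinely forced by Potter's bounds and is enough to dominate the slowly varying corrections for large $|s_j|$.
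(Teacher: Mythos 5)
Your proposal is correct and follows essentially the same route as the paper: a lower bound $k\,\Re\big(1-\lambda(z_{s/a_k})\big)\geq c\sum_{j}|s_j|^{\alpha/2}$ obtained from the perturbative expansion of $1-\lambda(z_t)$ together with Potter's bounds (the drop from $\alpha$ to $\alpha/2$ absorbing the slowly varying factor, small $|s_j|$ absorbed into $C$), followed by the spectral decomposition $R(z)^k=\lambda(z)^kP(z)+Q(z)^k$ with the uniformly geometric $Q$-term to pass to the operator norm. Note only that your ``second regime'' is vacuous: once $\delta$ is taken smaller than the radius of the neighborhood where \eqref{eq:spec-deco} holds, $s\in(-\delta a_k,\delta a_k)^d$ forces $s/a_k\in(-\delta,\delta)^d$, so the aperiodicity/compactness argument is not needed.
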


\begin{proof}
We only need to prove the former estimate. By virtue of the Potter bound \cite[Theorem 1.5.6]{BGT}, there exist $\delta,c_1,c_2 >0$ such that, for any $k\geq1$ and  $s\in(-\delta a_k,\delta a_k)^d$, 
\begin{align*}
  k\Re\big(1-\lambda(z_{s/a_k})\big)
  \geq  
   c_1 k a_k^{-\alpha}\ell(a_k) \sum_{j=1}^d |s_j|^{\alpha/2}
   \geq 
   c_2 \sum_{j=1}^d |s_j|^{\alpha/2},	
\end{align*}
which implies the desired estimate.
\end{proof}

We now prove Proposition \ref{Prop:loc-lim} by using the above two lemmas.

\begin{proof}[Proof of Proposition \ref{Prop:loc-lim}]
By the integral formula of Fourier coefficients, we have
\begin{align}
\begin{split}
\label{eq:Fourier-inv}
  T_{n_1,\dots, n_d}(k)
  &=
  \frac{1}{(2\pi)^d}
     \int_{(-\pi, \pi]^d} 
     R(z_s)^k \prod_{j=1}^d e^{-i n_j s_j}
     \rd s_1 \cdots \rd s_d
   \\
   &=
   \frac{1}{(2\pi a_k)^d}
	\int_{(-\pi a_k, \pi a_k]^d}
	R(z_{s/a_k})^k \prod_{j=1}^d e^{-i  s_j n_j/a_k}\rd s_1\cdots \rd s_d.  
\end{split}	
\end{align}
Take $\delta>0$ so small as in Lemma \ref{Lem:spec-gap-2}.
Then Assumption \ref{asmp:ape-ren} implies that there exists $\rho\in(0,1)$ such that
\begin{equation}
\|R(z_{u})^k \|_{\mathscr{L}(\cB)}= O(\rho^k),
\quad
\text{uniformly in $u\in(-\pi,\pi]^d\setminus (-\delta,\delta)^d$, as $k\to\infty$.}
\end{equation} 
By \eqref{eq:Fourier-inv} and \eqref{def:psi_j} we have
\begin{align}\label{eq:loc-lim-1}
\begin{split}
  &\sup_{n_1,\dots n_d\in \bZ_{\geq0}}\bigg\|a_k^d T_{n_1,\dots,n_d}(k) - 
  	\bigg(\prod_{j=1}^d \psi_j\Big(\frac{n_j}{a_k}\Big)\bigg)P\bigg\|_{\mathscr{L}(\cB)}
  \\
  &\leq 
  \frac{1}{(2\pi)^d}\int_{(-\delta a_k,\delta a_k)^d}
  \bigg\|R(z_{s/a_k})^k- \Big(\prod_{j=1}^d\bE\big[e^{i s_j \xi_j}\big]\Big)P\bigg\|_{\mathscr{L}(\cB)}\rd s_1\cdots \rd s_d + o(1),
\end{split}
\end{align}
as $k\to\infty$.
By virtue of Lemmas \ref{Lem:spec-gap} and \ref{Lem:spec-gap-2} and the equality \eqref{eq:Fourier-xi}, we can apply the dominated convergence theorem to the right-hand side of \eqref{eq:loc-lim-1}, which implies the desired result.
\end{proof}

\subsection{Proof of Proposition \ref{Prop:large-deviation}}
\label{subsec:Proof-LLD}

We can prove Proposition \ref{Prop:large-deviation} in almost the same way as in \cite[Theorem 1.6]{Gou11}, as we shall see below.

For $s\in [0,\infty]$ and $z=(z_1,\dots,z_d)\in\bar{\bD}^d$, we define a truncated series $R^{(s)}(z)\in \mathscr{L}(\cB)$ by
\begin{align}
	R^{(s)}(z)u
	%=\Big(R_{0,0} + \sum_{\substack{j=1,\dots,d \\ n<s-1}} z_j^n R_{j, n}\Big) u
	= R(z)\left(1_{\textstyle\{\varphi< s\}}u\right),
	\quad u\in\cB.
\end{align}
If $s<\infty$ then $R^{(s)}(z)$ is a polynomial and hence it is well-defined for $z\in\bC^d$. 
In addition we define $T^{(s)}_{n_1,\dots,n_d}(k)\in\mathscr{L}(\cB)$ as the coefficient of $z_1^{n_1}\dots z_d^{n_d}$ in $R^{(s)}(z)^k$, that is,
  \begin{align}
T_{n_1,\dots,n_d}^{(s)}(k)u
=T_{n_1,\dots,n_d}(k)
\left( 1_{\textstyle\bigcap_{p=1}^{k}\{\varphi\circ F^{p-1} < s\}}u\right), \quad u\in\cB.
\end{align}
It is obvious that $R^{(\infty)}(z)=R(z)$ and $T^{(\infty)}_{n_1,\dots,n_d}(k)=T_{n_1,\dots,n_d}(k)$. Furthermore we write
\begin{align*}
  \|R^{(s)}(z)^k\|_A = \sum_{n_1,\dots,n_d\in\bZ_{\geq0}} \|T^{(s)}_{n_1,\dots,n_d}(k)\|_{\mathscr{L}(\cB)}.	
\end{align*}
 
The following two lemmas are slight extensions of \cite[Lemmas 3.1 and 3.2]{Gou11}. We omit the proofs of them. 

\begin{Lem}\label{Lem:bound-A-norm}
There exists a constant $C>0$ such that, for any $k\in \bN$ and $s\in[a_k/2,\infty]$,
\begin{align*}
   \|R^{(s)}(z)^k\|_A \leq C.	
\end{align*}
\end{Lem}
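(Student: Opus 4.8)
The plan is to reproduce the argument of \cite[Lemma~3.1]{Gou11}, replacing the scalar return-index $n$ by the vector index $(n_1,\dots,n_d)$ throughout. First I would record the combinatorial structure. Since $\{S_{A_j}(\varphi)=n\}\subset\{\varphi=n+1\}$ by \eqref{eq:varphi_j} and $\{\varphi=1\}=Y\cap f^{-1}(Y)$, the truncation at $s$ simply discards the operators $R_{j,n}$ with $n+1\ge s$, so (for $s>1$, the finitely many pairs with $a_k\le2$ being absorbed into the constant) one has $R^{(s)}(z)=R_{0,0}+\sum_{j=1}^{d}\sum_{n:\,n+1<s}z_j^{n}R_{j,n}$, a finite sum and an honest polynomial in $z$ when $s<\infty$. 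Expanding $R^{(s)}(z)^{k}$ and collecting the coefficient of $z_1^{n_1}\cdots z_d^{n_d}$ exhibits $T^{(s)}_{n_1,\dots,n_d}(k)$ as a sum of ordered products of $k$ factors, each factor being $R_{0,0}$ or some $R_{j,n}$ with $n+1<s$, the product having total $z_j$-degree $n_j$; summing operator norms and using submultiplicativity of $\|\cdot\|_{\mathscr{L}(\cB)}$ gives
\[
  \|R^{(s)}(z)^{k}\|_A
  \ \le\
  \sum_{\substack{\text{words }w\\ \text{of length }k}}\ \prod_{p=1}^{k}\bigl\|(\text{$p$-th factor of }w)\bigr\|_{\mathscr{L}(\cB)}.
\]

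By Assumption~\ref{asmp:ape-ren}\,\eqref{asmp:ape-ren-bdd} together with $\sum_{j,n}r_{j,n}\le1$, the right-hand side is finite and bounded by $(\|R_{0,0}\|_{\mathscr{L}(\cB)}+C)^{k}$; but this crude bound is in general exponential in $k$, and improving it to a constant is the whole content of the lemma. Following \cite{Gou11}, this is achieved by a one-big-jump (multi-scale) decomposition: one fixes a threshold of order $a_k$, separates within each word the factors $R_{j,n}$ with $n$ above the threshold from the remaining ``small'' factors, estimates a run of small factors by means of the spectral gap of $R$ on $\cB$ (Assumption~\ref{asmp:ape-ren}\,\eqref{asmp:ape-ren-spec}; equivalently $\sup_{m}\|R^{m}\|_{\mathscr{L}(\cB)}<\infty$ and $\|R^{m}-P\|_{\mathscr{L}(\cB)}\le C\rho^{m}$ for some $\rho\in(0,1)$), and controls the cost of each large factor through the tail estimate $\sum_{j=1}^{d}\sum_{n\ge a_k/2}\|R_{j,n}\|_{\mathscr{L}(\cB)}\le C\sum_{j=1}^{d}\sum_{n\ge a_k/2}r_{j,n}=O\!\bigl(a_k^{-\alpha}\ell(a_k)\bigr)=O(1/k)$, which is where Assumption~\ref{asmp:reg-var} and \eqref{eq:reg-var-ak} enter and where the hypothesis $s\ge a_k/2$ is used. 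Summing over the number $r$ of large factors and over their $\le\binom{k}{r}$ positions then converts the exponential bound into a uniformly convergent series of the shape $C\sum_{r\ge0}\binom{k}{r}(C/k)^{r}\le Ce^{C}$, bounded independently of $k$, $s$ and $z$.

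The main obstacle is exactly this second step. Because the $A$-norm is a \emph{sum} of operator norms rather than the norm of a sum, the triangle inequality runs the wrong way and one cannot simply invoke $\sup_{m}\|R^{m}\|_{\mathscr{L}(\cB)}<\infty$; the block/one-big-jump estimate must genuinely intertwine the geometric decay of $\|R^{m}-P\|_{\mathscr{L}(\cB)}$, the bound $\|R_{j,n}\|_{\mathscr{L}(\cB)}\le Cr_{j,n}$, and the regular variation of the tails $\sum_{k\ge n}r_{j,k}$, in the precise bookkeeping carried out in \cite[Lemma~3.1]{Gou11}. Granting that argument, the passage from $d=1$ to general $d$ is purely notational --- the scalar index $n$ becomes $(n_1,\dots,n_d)$ and the threshold $a_k$ is unchanged --- so no new difficulty arises.
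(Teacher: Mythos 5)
Your proposal matches the paper exactly in spirit: the paper gives no proof of Lemma \ref{Lem:bound-A-norm} at all, stating only that it is a ``slight extension of \cite[Lemma 3.1]{Gou11}'' whose proof is omitted, and your reduction --- carry out Gou\"ezel's one--big--jump bookkeeping verbatim with the scalar index $n$ replaced by $(n_1,\dots,n_d)$, the threshold $a_k$ unchanged, and the tail bound $\sum_{j}\sum_{n\ge a_k/2}\|R_{j,n}\|_{\mathscr{L}(\cB)}=O(1/k)$ supplied by Assumptions \ref{asmp:ape-ren}\,\eqref{asmp:ape-ren-bdd} and \ref{asmp:reg-var} together with \eqref{eq:reg-var-ak} --- is precisely the content of that remark. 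You also correctly flag that the only non-notational point is the uniform control of runs of small factors in the $A$-norm, which is the part genuinely borrowed from \cite{Gou11}.
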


\begin{Lem}\label{Lem:truncated-local-lim}
There exists a constant $C>0$ such that, for any $k\in\bN$, $s\in [a_k/2,\infty]$ and $n_1,\dots,n_d\in \bZ_{\geq0}$,
\begin{align*}
 \|T^{(s)}_{n_1,\dots,n_d}(k)\|_{\mathscr{L}(\cB)}
 \leq
 \frac{Ce^{-(n_1+\dots+n_d)/s}}{a_k^d}.	
\end{align*}
	
\end{Lem}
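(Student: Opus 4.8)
The statement is a $d$-dimensional version of \cite[Lemma 3.2]{Gou11}, and the natural route is to reuse the perturbative machinery already assembled for Proposition \ref{Prop:loc-lim}. Assume first $s<\infty$, so that $R^{(s)}(z)$ is a polynomial in $z=(z_1,\dots,z_d)$ with $R^{(s)}(z)^k=\sum_{n_1,\dots,n_d\ge 0}T^{(s)}_{n_1,\dots,n_d}(k)\,z_1^{n_1}\cdots z_d^{n_d}$. Integrating over the torus $\{|z_1|=\dots=|z_d|=e^{1/s}\}$ (Cauchy's formula) gives
\[
T^{(s)}_{n_1,\dots,n_d}(k)=\frac{e^{-(n_1+\dots+n_d)/s}}{(2\pi)^d}\int_{(-\pi,\pi]^d}R^{(s)}\bigl(e^{1/s}e^{i\theta_1},\dots,e^{1/s}e^{i\theta_d}\bigr)^k\prod_{j=1}^de^{-in_j\theta_j}\,\rd\theta_1\cdots\rd\theta_d,
\]
so that the desired bound follows once one shows $\int_{(-\pi,\pi]^d}\|R^{(s)}(e^{1/s}e^{i\theta_1},\dots,e^{1/s}e^{i\theta_d})^k\|_{\mathscr{L}(\cB)}\,\rd\theta\le Ca_k^{-d}$ uniformly in $k\in\bN$ and $s\in[a_k/2,\infty]$; the exponential factor is produced by the choice of radius $e^{1/s}$, and the case $s=\infty$ (with radius $1$) reduces to the integrated form of Proposition \ref{Prop:loc-lim}.

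For this reduced estimate the plan is to split $(-\pi,\pi]^d$ into a fixed cube $(-\delta,\delta)^d$ around the origin and its complement. Off the origin, aperiodicity (Assumption \ref{asmp:ape-ren}\eqref{asmp:ape-ren-ape}), compactness, and a perturbation argument yield a uniform bound $\|R^{(s)}(e^{1/s}e^{i\theta_1},\dots)^k\|_{\mathscr{L}(\cB)}\le C\tilde\rho^{\,k}$ with $\tilde\rho\in(0,1)$; since $(a_k)$ is regularly varying of positive index $1/\alpha$, this is $o(a_k^{-d})$ and contributes nothing. Near the origin one uses the spectral decomposition $R^{(s)}(z)=\lambda^{(s)}(z)P^{(s)}(z)+Q^{(s)}(z)$, valid on a fixed neighbourhood of $(1,\dots,1)$ with $\|Q^{(s)}(z)^k\|_{\mathscr{L}(\cB)}=O(\rho^k)$ uniformly in $z$ there and in large $s$, together with a truncated analogue of Lemma \ref{Lem:spec-gap-2}: there are $\delta,c,C>0$ such that for $s\ge a_k/2$ and $|\theta_j|<\delta$,
\[
\bigl|\lambda^{(s)}\bigl(e^{1/s}e^{i\theta_1},\dots,e^{1/s}e^{i\theta_d}\bigr)\bigr|^k\le C\prod_{j=1}^de^{-c|a_k\theta_j|^{\alpha/2}}.
\]
Integrating the right-hand side over $(-\delta,\delta)^d$ and substituting $u_j=a_k\theta_j$ produces the claimed $Ca_k^{-d}$. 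The finitely many $k$ for which $s\ge a_k/2$ forces no largeness of $s$ are dealt with crudely, using Lemma \ref{Lem:bound-A-norm} (which already bounds $\sum_{n_1,\dots,n_d}\|T^{(s)}_{n_1,\dots,n_d}(k)\|_{\mathscr{L}(\cB)}$) and the trivial bound $\|R^{(s)}(z)\|_{\mathscr{L}(\cB)}\le C$ for $|z_j|\le e^{1/s}$, $s\ge 1$, coming from Assumption \ref{asmp:ape-ren}\eqref{asmp:ape-ren-bdd}.

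The main point requiring care is the uniformity of the spectral estimates in the truncation level $s$, i.e.\ establishing the two displayed truncated analogues with constants independent of $s\ge a_k/2$. The quantitative input that makes this work is that such a truncation perturbs the transfer operators by only $O(s^{-\alpha}\ell(s))$: indeed $\|R(z)-R^{(s)}(z)\|_{\mathscr{L}(\cB)}\le C\sum_{j}\sum_{n\ge s}r_{j,n}\asymp s^{-\alpha}\ell(s)$ by Assumptions \ref{asmp:reg-var} and \ref{asmp:ape-ren}\eqref{asmp:ape-ren-bdd}, while $\|R^{(s)}(e^{1/s}z)-R^{(s)}(z)\|_{\mathscr{L}(\cB)}\le C s^{-1}\sum_{j}\sum_{n<s}n\,r_{j,n}\asymp s^{-\alpha}\ell(s)$ by Karamata's theorem (using $r_{j,n}=O(n^{-\alpha-1}\ell(n))$ and $0<\alpha<1$). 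By \eqref{eq:reg-var-ak}, for $s\ge a_k/2$ this is $O(1/k)$, precisely the order of $1-\lambda^{(s)}$ on the frequency scale $|\theta_j|\asymp a_k^{-1}$, so both corrections are absorbed when one repeats verbatim the arguments of Lemmas \ref{Lem:spec-gap} and \ref{Lem:spec-gap-2} (with the Potter bounds \cite[Theorem 1.5.6]{BGT} and \cite[Lemma 2.6.1]{IbrLin}). Apart from this bookkeeping, the proof is a routine rerun of the earlier ones.
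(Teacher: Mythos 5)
Your plan is correct and follows exactly the route the paper intends: the paper omits this proof, presenting the lemma as a slight extension of \cite[Lemma 3.2]{Gou11}, and your argument (Cauchy's formula on the polytorus of radius $e^{1/s}$, the splitting into a neighbourhood of $\theta=0$ treated by the spectral decomposition and eigenvalue bound of Lemma \ref{Lem:spec-gap-2} versus the aperiodic region treated by a uniform $O(\tilde\rho^{\,k})$ contraction, and the key observation that truncation at level $s\ge a_k/2$ together with the radius shift perturbs the operators only by $O(s^{-\alpha}\ell(s))=O(1/k)$) is precisely the multidimensional rerun of Gou\"ezel's proof. The quantitative inputs you cite (the tail estimate from Assumptions \ref{asmp:reg-var} and \ref{asmp:ape-ren}, Karamata for the truncated first moment, Potter bounds, and the crude treatment via Lemma \ref{Lem:bound-A-norm} of the finitely many small $k$) are the right ones, so I see no gap.
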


\begin{proof}[Proof of Proposition \ref{Prop:large-deviation}]
We only need to focus on the case $k$ is sufficiently large.
Fix a constant $\gamma\in(\frac{1+\alpha}{1+2\alpha},1)$. For $n\geq a_k$, set 
\begin{align*}
w=w(n,k)=\frac{n}{a_k}\geq1 \quad\text{and}\quad\zeta=\zeta(n,k)= \frac{w^\gamma a_k}{2}\in\left[\frac{a_k}{2}, \frac{n}{2}\right].
\end{align*}
We define a Borel subset $\Omega\subset Y$ by 
\begin{align*}
\Omega=\Omega(n,k)=\left\{\sum_{p=1}^{k}\varphi\circ F^{p-1}=n\right\}. 
\end{align*}
We decompose $\Omega$ as $\Omega=\Omega_0\cup\dots \cup \Omega_3$ where
\begin{align*}
   \Omega_3
   &=
   \Omega\cap \bigcup_{p=1}^k\{\varphi\circ F^{p-1}\geq n/2\},
   \\
   \Omega_2
   &=
   \Omega\cap
   \Omega_3^c\cap\bigcup_{1\leq p<q\leq k}\{\varphi\circ F^{p-1}\geq\zeta\;\;\text{and}\;\; \varphi\circ F^{q-1}\geq \zeta\},
   \\
   \Omega_1
   &=
   \Omega\cap\Omega_3^c
   \cap\Omega_2^c \cap
   \bigcup_{p=1}^k
   \{\varphi\circ F^{p-1}\geq\zeta\},
   \\
   \Omega_0
   &=
   \Omega\cap\bigcap_{p=1}^k
   \{\varphi\circ F^{p-1}< \zeta\}.	
\end{align*}
Note the map $\cB \ni u\mapsto T_{n_1,\dots,n_d}(k)(1_{\Omega_i}u)\in \cB$ belongs to $\mathscr{L}(\cB)$.
Define $\Sigma_i$  by
\begin{align*}
   \Sigma_i=\Sigma_i(n,k)=  \sum_{\substack{n_1,\dots,n_d\\ n_1+\dots+n_d+k=n}}
   \sup_{\substack{u\in \cB \\ \|u\|_{\cB}\leq 1}}
   \|T_{n_1,\dots,n_d}(k)(1_{\Omega_i}u)\|_{\cB},
   \quad i=0,1,2,3.	
\end{align*}
We will show that there exists $C>0$ such that $\Sigma_i\leq C k n^{-\alpha-1}\ell(n)$ for $i=0, 1, 2,3$, which immediately implies the desired estimate.

\emph{Estimate of $\Sigma_3$.}
We decompose $\Sigma_3$ according to the value of $p$ such that $\varphi\circ F^{p-1}\geq n/2$ and then we have
\begin{align*}
  &\Sigma_3
  \\
  &\leq  \sum_{p=1}^k \sum_{\substack{j=1,\dots,d,\\ l> n/2}} \sum_{\substack{n_i,m_i\:(i=1,\dots,d)\\ \sum_i (n_i +m_i)+l+k=n}}
  \|T_{n_1,\dots,n_d}(p-1)\|_{\mathscr{L}(\cB)}\cdot
  \|R_{j,l}\|_{\mathscr{L}(\cB)}
  \cdot
  \|T_{m_1,\dots,m_d}(k-p)\|_{\mathscr{L}(\cB)}
  \\
  &\leq d\sum_{p=1}^k \sum_{\substack{n_i,m_i\:(i=1,\dots,d)\\ \sum_i (n_i +m_i)\leq n/2}} 
  \|T_{n_1,\dots,n_d}(p-1)\|_{\mathscr{L}(\cB)}\cdot
  \bigg(
  \sup_{\substack{j=1,\dots,d\\ l> n/2}}\|R_{j,l}\|_{\mathscr{L}(\cB)}
  \bigg)
  \cdot
  \|T_{m_1,\dots,m_d}(k-p)\|_{\mathscr{L}(\cB)}
  \\
  &
  \leq
  \bigg(\sum_{p=1}^k \|R(z)^{p-1}\|_A \cdot\|R(z)^{k-p}\|_A\bigg) C n^{-\alpha-1}\ell(n)\leq C' kn^{-\alpha-1}\ell(n),	
\end{align*}
for some $C, C'>0$. Here we used Lemma \ref{Lem:bound-A-norm}.

\emph{Estimate of $\Sigma_2$.}
In a similar way as in bounding $\Sigma_3$, we decompose $\Sigma_2$ according to the smallest indeces of $p<q$ such that $\varphi\circ F^{p-1}, \varphi\circ F^{q-1}\geq \zeta$, and then we have
\begin{align*}
  \Sigma_2
  &\leq
  \sum_{1\leq p<q\leq k}
   \|R^{(\zeta)}(z)^{p-1}\|_A 
   \cdot 
   \bigg(\sup_{\substack{j=1,\dots,d, \\ l\in (\zeta, n/2]}}\|R_{j,l}\|_{\mathscr{L}(\cB)}\bigg)\cdot
   \|R^{(\zeta)}(z)^{q-p-1}\|_A
   \\
   &\hspace{20mm}
    \cdot
   \bigg(\sum_{\substack{j=1,\dots,d, \\ l\in (\zeta, n/2]}}\|R_{j,l}\|_{\mathscr{L}(\cB)}\bigg)
   \cdot
    \|R^{(n/2)}(z)^{k-q}\|_A
   \\
   &\leq C k^2 \zeta^{-2\alpha-1} \ell(\zeta).
\end{align*}
for some $C>0$. Note that $k\sim a_k^{\alpha}/(\ell(a_k)\Gamma(1-\alpha))$, $\zeta/a_k=w^\gamma/2$ and $n/\zeta=2w^{1-\gamma}$. 
Potter's bounds yield that, for any $\varepsilon>0$, there exists a constant $C'>0$ such that
\begin{align*}
	k\zeta^{-\alpha}\ell(\zeta)\leq C'w^{-\alpha\gamma+\varepsilon}
	\quad
	\text{and}
	\quad
	\frac{\zeta^{-\alpha-1}\ell(\zeta)}{n^{-\alpha-1}\ell(n)}\leq C'w^{(\alpha+1)(1-\gamma)+\varepsilon}
\quad\text{for sufficiently large $k,n$.}
\end{align*}
Here we used $\zeta/a_k=w^\gamma/2$ and $\zeta/n=2w^{-1+\gamma}$.
Therefore 
\begin{align*}
k\zeta^{-2\alpha-1}\ell(\zeta)\leq C''w^{-\alpha\gamma+(\alpha+1)(1-\gamma)+2\varepsilon} n^{-\alpha-1}\ell(n),
\quad\text{for sufficiently large $k,n$}.
\end{align*}
Since $-\alpha\gamma+(\alpha+1)(1-\gamma)<0$, we  conclude that $\Sigma_2\leq C''kn^{-\alpha-1}\ell(n)$ for some $C''>0$ and for sufficiently large $k,n$ with $n\geq a_k$.

\emph{Estimate of $\Sigma_1$.}
We decompose $\Sigma_1$ according to the value $p$ such that $\varphi\circ F^{p-1}\geq\zeta$ and then we have
\begin{align*}
   \Sigma_1
   &\leq 
   \sum_{p=1}^k \sum_{\substack{j=1,\dots,d,\\ l\in(\zeta, n/2]}}\sum_{\substack{n_i,m_i\:(i=1,\dots,d)\\ \sum_i (n_i +m_i)+l+k = n}}  
   \|T_{n_1,\dots,n_d}^{(\zeta)}(p-1)\|_{\mathscr{L}(\cB)}\cdot
  \|R_{j,l}\|_{\mathscr{L}(\cB)}
  \cdot
  \|T_{m_1,\dots,m_d}^{(\zeta)}(k-p)\|_{\mathscr{L}(\cB)}
  \\
  &=:\sum_{p=1}^k \Sigma_1^{(p)}.
\end{align*}
Let us decompose $\sum_p\Sigma_1^{(p)}$ into the sum over $p\leq k/w$ or $p\geq k-k/w$ and the sum over $p\in(k/w, k-k/w)$. Let us fix a constant $\varepsilon\in(0, 1-(\alpha+1)(1-\gamma))$. The former sum is bounded as follows in a similar way as in estimates of $\Sigma_3$ and $\Sigma_2$: there exists constants $C,C', C''>0$ such that, for sufficiently large $k,n$,
\begin{align*}
   \sum_{\substack{p\leq k/w\\\text{or}\; p\geq k-k/w}}\Sigma_1^{(p)}
   &\leq
  \frac{Ck}{w}\sup_{\substack{j=1,\dots,d, \\ l\in (\zeta,n/2]}}\|R_{j,l}\|_{\mathscr{L}(\cB)}
  \leq 
   \frac{C'k}{w}\zeta^{-\alpha-1}\ell(\zeta)
   \\
   &\leq
   C''kw^{-1+(\alpha+1)(1-\gamma)+\varepsilon}n^{-\alpha-1}\ell(n).
\end{align*}
The right-hand side is bounded by $C''k n^{-\alpha-1}\ell(n)$, since the exponent of $w$ is negative.

Let us estimate $\sum_{p\in(k/w, k-k/w)}\Sigma_1^{(p)}$. Without loss of generality we may assume $k<n/6$. Since $\sum_{i}(n_i+m_i)+l+k=n$ and $l<n/2$, we see that $\sum_i n_i>n/6$ or $\sum_i m_i>n/6$. 
Let us focus on the former case.
We use Lemma \ref{Lem:truncated-local-lim} and Potter's bound and then have
\begin{align*}
	\sup_{\substack{p\in(k/w, k-k/w)}} \sup_{\substack{n_1,\dots,n_d\\ \sum_i n_i\in(n/6,n]}}
	\|T_{n_1,\dots,n_d}(p-1)\|_{\mathscr{L}(\cB)}
	&\leq 
	 \sup_{\substack{p\in(k/w, k-k/w)}}\frac{Ce^{-n/(6\zeta)}}{a_{p-1}}
	\\
	&\leq \frac{C'w^{2/\alpha}\exp(-w^{1-\gamma}/3)}{a_k}. 
\end{align*}
for some $C, C'>0$. Hence we see that
\begin{align*}
   &\sum_{p\in(k/w, k-k/w)} \sum_{\substack{j=1,\dots,d,\\ l\in(\zeta, n/2]}}\sum_{\substack{n_i,m_i\:(i=1,\dots,d)\\ \sum_i (n_i +m_i)+l+k = n, \\ \sum_i n_i\in(n/6,n]}}  
   \|T_{n_1,\dots,n_d}^{(\zeta)}(p-1)\|_{\mathscr{L}(\cB)}\cdot
  \|R_{j,l}\|_{\mathscr{L}(\cB)}
  \cdot
  \|T_{m_1,\dots,m_d}^{(\zeta)}(k-p)\|_{\mathscr{L}(\cB)}
  \\
  &\leq 
  k \frac{C'w^{2/\alpha}\exp(-w^{1-\gamma}/3)}{a_k}
  \sum_{\substack{j=1,\dots,d, \\ l\in (\zeta, n/2]}}\sum_{\substack{m_1,\dots,m_d\\ \sum_i m_i+l<5n/6}}  
  \|R_{j,l}\|_{\mathscr{L}(\cB)}
  \cdot
  \|T_{m_1,\dots,m_d}^{(\zeta)}(k-p)\|_{\mathscr{L}(\cB)}
  \\
  &\leq
  k\frac{C'w^{1+2/\alpha}\exp(-w^{1-\gamma}/3)}{n}
 \bigg(\sum_{\substack{j=1,\dots,d, \\ l\in (\zeta, n/2]}}\|R_{j,l}\|_{\mathscr{L}(\cB)}\bigg)
   \cdot
    \|R^{(\zeta)}(z)^{k-p}\|_A
   \\
   &\leq
   C'' kn^{-\alpha-1}\ell(n) w^{C'''} \exp(-w^{1-\gamma}/3).
\end{align*}
for some $C'', C'''>0$ and for sufficiently large $k,n$. Here we used Lemma \ref{Lem:bound-A-norm} and
\begin{align*}
	\sum_{\substack{j=1,\dots,d, \\ l\in (\zeta, n/2]}}\|R_{j,l}\|_{\mathscr{L}(\cB)}\leq C_0\zeta^{-\alpha}\ell(\zeta)\leq C_1 w^{2(1-\gamma)\alpha}n^{-\alpha}\ell(n),
\end{align*}
for some $C_0,C_1>0$ for sufficiently large $k,n$, as in the estimate of  $\Sigma_2$. Therefore we obtain the desired estimate in the case $\sum_i n_i>n/6$. In almost the same way we can deal with the case $\sum_i m_i>n/6$. Therefore we conclude that $\Sigma_1\leq Ckn^{-1-\alpha}\ell(n)$ for some $C>0$.

\emph{Estimate of $\Sigma_0$.}
We use Lemma \ref{Lem:truncated-local-lim} to obtain
\begin{align}\label{estimate-sigma0}
\Sigma_0=\|R^{(\zeta)}(z)^k\|_A
\leq
\sum_{\substack{n_1,\dots,n_d\\ n_1+\dots+n_d+k=n}}\frac{Ce^{-n/\zeta}}{a_k^d}
\leq
\frac{Cw^{d-1}\exp(-2w^{1-\gamma})}{a_k}.
\end{align} 
By Potter's bounds, %for any $\varepsilon>0$, 
there exists $C', C''>0$ such that, for sufficiently large $k,n$,
\begin{align}\label{estimate-sigma0-1}
   kn^{-\alpha-1}\ell(n)\geq C'\frac{n^{-\alpha-1}\ell(n)}{a_k^{-\alpha}\ell(a_k)}
   \geq
   C''\frac{w^{-\alpha-2}}{a_k}.	
\end{align}
Note that $w^{d-1}\exp(-2w^{1-\gamma})=o(w^{-\alpha-2})$, as $w\to\infty$.
Comparing \eqref{estimate-sigma0} and \eqref{estimate-sigma0-1}, we conclude that $\Sigma_0\leq  C'''kn^{-\alpha-1}$ for some $C'''>0$.
\end{proof}

\section{Proof of Theorem \ref{Thm:conv-of-density}}\label{sec:proof}

Recall $b_t$ and $a_k$ are defined by \eqref{def:b_t} and \eqref{def:a_k}, respectively. Let us prepare two auxiliary lemmas.

\begin{Lem}\label{Lem:k-bn}
For any $c>1$,
\begin{equation}
	\sup\bigg\{\bigg|\frac{k}{b_n}-\Big(\frac{a_k}{n}\Big)^{\alpha}\bigg|\::\:k\in [b_{n/c},b_{cn})\bigg\}\to 0,
	\quad \text{as $n\to\infty$}.
\end{equation}	
\end{Lem}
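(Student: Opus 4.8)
The plan is to first pin down the size of $a_k$ when $k$ runs over the window $[b_{n/c},b_{cn})$, and then to substitute the regular-variation asymptotics for $b_n$ and for $a_k$, so that the whole estimate reduces to the uniform convergence theorem for slowly varying functions.

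\textbf{Locating $a_k$.} We use the equivalence $b_t\leq k \Leftrightarrow t<a_k$ recorded just after \eqref{def:a_k}. From $b_{n/c}\leq k$ we get $n/c<a_k$, while from $k<b_{cn}$ (equivalently, $b_{cn}\not\leq k$) we get $a_k\leq cn$. Hence
\[
\frac1c<\frac{a_k}{n}\leq c \qquad\text{for every $k\in[b_{n/c},b_{cn})$.}
\]
Moreover every such $k$ satisfies $k\geq b_{n/c}=b_{[n/c]}\to\infty$ as $n\to\infty$, so the requirement ``$k$ large'' (and, via the previous display, ``$a_k$ large'') holds uniformly over the window once $n$ is large.

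\textbf{Substituting the asymptotics.} By \eqref{def:b_t}, $b_n=(1+o(1))\,n^\alpha/(\Gamma(1-\alpha)\ell(n))$ as $n\to\infty$, and by \eqref{eq:reg-var-ak}, $k=(1+o(1))\,a_k^\alpha/(\Gamma(1-\alpha)\ell(a_k))$ as $k\to\infty$. Since $k\to\infty$ uniformly over the window, dividing these relations gives
\[
\frac{k}{b_n}=(1+o(1))\left(\frac{a_k}{n}\right)^{\alpha}\frac{\ell(n)}{\ell(a_k)},
\]
with the $o(1)$ uniform in $k\in[b_{n/c},b_{cn})$ as $n\to\infty$. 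Writing $a_k=\lambda_k n$ with $\lambda_k:=a_k/n\in(1/c,c]$, the uniform convergence theorem for slowly varying functions \cite[Theorem 1.2.1]{BGT} yields $\ell(\lambda n)/\ell(n)\to1$ uniformly for $\lambda$ in the compact set $[1/c,c]\subset(0,\infty)$, hence $\ell(n)/\ell(a_k)\to1$ uniformly over the window. Since also $(a_k/n)^{\alpha}\leq c^{\alpha}$ is bounded, we conclude
\[
\frac{k}{b_n}-\left(\frac{a_k}{n}\right)^{\alpha}
=\left(\frac{a_k}{n}\right)^{\alpha}\left[(1+o(1))\frac{\ell(n)}{\ell(a_k)}-1\right]\longrightarrow0
\]
uniformly over $k\in[b_{n/c},b_{cn})$ as $n\to\infty$, which is the asserted convergence.

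\textbf{Main obstacle.} Nothing here is deep; the only point requiring care is uniformity, namely that ``$k\to\infty$'', ``$a_k\to\infty$'' and ``$a_k/n$ lies in a fixed compact subset of $(0,\infty)$'' all hold uniformly over the window before one invokes the (already uniform) statements \eqref{eq:reg-var-ak} and \cite[Theorem 1.2.1]{BGT}. The first step is designed precisely to secure these three facts.
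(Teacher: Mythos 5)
Your proof is correct and follows essentially the same route as the paper: both identify the window $k\in[b_{n/c},b_{cn})$ with $a_k\in(n/c,cn]$ and then conclude by the uniform convergence theorem of regular variation. The only cosmetic difference is that the paper sandwiches $k$ between $b_{a_k-1}$ and $b_{a_k}$ and applies the theorem directly to the regularly varying function $b(\cdot)/b(n)$, whereas you route the argument through \eqref{eq:reg-var-ak} and apply the theorem to the slowly varying factor $\ell$; both are valid.
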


\begin{proof}
Write $b(n)=b_n$.
Then 
\begin{equation}
\frac{b(a_k-1)}{b_n}\leq \frac{k}{b_n} <\frac{b(a_k)}{b_n}.
\end{equation}
Note that $k\in [b_{n/c},b_{cn})$ if and only if $a_k\in(n/c, cn]$.
By the uniform convergence theorem for regular varying functions \cite[Theorem 1.5.2]{BGT},
\begin{equation}
   \sup
   \bigg\{
     \bigg|\frac{b(a_k-1)}{b_n}
           -\Big(\frac{a_k}{n}\Big)^{\alpha}\bigg|
     +\bigg|\frac{b(a_k)}{b_n}
           -\Big(\frac{a_k}{n}\Big)^{\alpha}\bigg|
     \::\:k\in [b_{n/c},b_{cn})
   \bigg\}\to 0,
	\quad \text{as $n\to\infty$},	
\end{equation}
which implies the desired result.
\end{proof}

\begin{Lem}\label{Lem:Lambda-n}
Fix a constant $c>1$. For $n\geq c$, define a finite measure $\Lambda_n$ on $[0,\infty)^d$ by
\begin{equation}
	\Lambda_n(\rd y_1\dots \rd y_d)
	=
	\sum_{k\in [b_{n/c},b_{cn})}\sum_{\substack{n_1,\dots,n_d \\ n_1+\dots+n_d=n-k}}
	\delta_{(n_1/a_k,\dots,n_d/a_k)}
	(\rd y_1\dots \rd y_d),
\end{equation}
where $\delta_{(n_1/a_k,\dots,n_d/a_k)}$ denotes the Dirac measure on $(n_1/a_k,\dots,n_d/a_k)$.
Then 
\begin{equation}
\frac{\Lambda_n(\rd y_1\cdots\rd y_d)}{\alpha b_n n^{d-1}}
	 \to
	 \frac{1_{\textstyle [0,\infty)^d \cap\{c^{-1}< \sum_{j=1}^d y_j < c\}}\rd y_1\cdots\rd y_d}
	      {(\sum_{j=1}^d y_j)^{d+\alpha}}
	     ,
	 \quad 
	 \text{as $n\to\infty$},	
\end{equation}
in the sense of the vague convergence of finite measures on $[0,\infty)^d$.
\end{Lem}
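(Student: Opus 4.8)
The plan is to test the normalized measures against an arbitrary $h\in C_c([0,\infty)^d)$ and recognize the resulting double sum as a Riemann sum. First I would write
\[
\frac{1}{\alpha b_n n^{d-1}}\int h\,\rd\Lambda_n
=
\frac{1}{\alpha b_n n^{d-1}}
\sum_{k\in[b_{n/c},b_{cn})}\;
\sum_{\substack{n_1,\dots,n_d\geq0\\ n_1+\cdots+n_d=n-k}}
h\Big(\frac{n_1}{a_k},\dots,\frac{n_d}{a_k}\Big).
\]
Fix $k$. On the simplex $\{n_1+\cdots+n_d=n-k\}$ there are $(d-1)$ free integer coordinates, each running over a spacing $1/a_k$ after rescaling; so the inner sum is a Riemann sum for $a_k^{d-1}\int_{\Delta} h$, where $\Delta=\{y\in[0,\infty)^d:\sum_j y_j=(n-k)/a_k\}$ carries the natural $(d-1)$-dimensional Lebesgue measure $\rd\sigma$ induced by forgetting, say, $y_d$. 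More precisely, uniform continuity and compact support of $h$ give
\[
\sum_{\substack{n_1,\dots,n_d\geq0\\ n_1+\cdots+n_d=n-k}}
h\Big(\frac{n_1}{a_k},\dots,\frac{n_d}{a_k}\Big)
=
a_k^{d-1}\int_{\Delta}h\,\rd\sigma + o(a_k^{d-1}),
\]
uniformly over $k\in[b_{n/c},b_{cn})$, since there $a_k\in(n/c,cn]\to\infty$.

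Next I would feed this back into the outer sum over $k$ and turn that into a one-dimensional Riemann sum. Using Lemma 5.5, for $k\in[b_{n/c},b_{cn})$ we have $k/b_n=(a_k/n)^\alpha+o(1)$ uniformly, and correspondingly $(n-k)/a_k = n/a_k\cdot(1-k/n)$; since $k\le b_{cn}=o(n)$ (because $b_n=o(n)$), in fact $(n-k)/a_k\sim n/a_k$ uniformly. Also $a_k^{d-1}/n^{d-1}\to (a_k/n)^{d-1}$ and, writing $s:=a_k/n\in(1/c,c]$, one has $k\approx b_n s^\alpha$, so as $k$ increases by $1$, $a_k$ increases (regular variation index $1/\alpha$) and the increments $\Delta(a_k/n)$ behave like $\frac{1}{\alpha b_n}s^{1-\alpha}$ — this is exactly the Jacobian of $k\mapsto s=(a_k/n)$ read off from $k\sim b_n s^\alpha$. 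Hence
\[
\frac{1}{\alpha b_n n^{d-1}}\sum_{k\in[b_{n/c},b_{cn})} a_k^{d-1}\int_{\Delta}h\,\rd\sigma
\;\longrightarrow\;
\frac{1}{\alpha}\int_{1/c}^{c} s^{d-1}\Big(\int_{\{\sum y_j=s\}}h\,\rd\sigma\Big)\alpha s^{\alpha-1}\,\frac{\rd s}{s^{?}},
\]
and after collecting powers of $s$ correctly the coarea/layer-cake identity $\int_{[0,\infty)^d}F(\sum y_j)\,\rd y = \int_0^\infty F(s)\,c_{d-1}s^{d-1}\,\rd s$ (with the simplex slices) rewrites the limit as
\[
\int_{[0,\infty)^d} h(y)\,\frac{1_{\{1/c<\sum_j y_j<c\}}}{(\sum_j y_j)^{d+\alpha}}\,\rd y,
\]
which is the claimed density. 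I would double-check the exponent bookkeeping by the sanity case $h\equiv 1$ on the shell, or by comparing with the $d=1$ instance already implicit in Gou\"ezel's work.

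\textbf{Main obstacle.} The delicate point is making the change of variables $k\mapsto a_k/n$ rigorous and uniform: $a_k$ is an integer-valued regularly varying sequence of index $1/\alpha$, so I need that the increments $a_{k+1}-a_k$ are asymptotically $\frac{1}{\alpha}(a_k^{1+ \text{(slowly varying)}})/k$ and that summation against the slowly varying $\ell$ does not spoil the Riemann-sum approximation; the clean way is to push everything through Lemma 5.5 and the uniform convergence theorem for regularly varying functions \cite[Th.~1.5.2]{BGT} rather than differentiating $a_k$ directly — i.e.\ approximate the outer sum by $\int 1_{[b_{n/c},b_{cn})}(k)\,(\cdots)\,\rd k$ and substitute $k=b_n s^\alpha$ inside the integral. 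The second, more routine obstacle is handling the boundary slices $\sum y_j$ near $1/c$ and $c$, where one uses that $\partial([1/c,c])$ has measure zero for the limit so the vague limit is insensitive to the $o(b_n n^{d-1})$ worth of lattice points there.
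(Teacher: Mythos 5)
Your plan is correct in substance and would prove the lemma, but it is organized differently from the paper's proof. The paper does not test against compactly supported functions: it reduces vague convergence to convergence of Laplace transforms (citing \cite[Theorems 5.3 and 5.22]{Kal02}), replaces the inner sum over $\{n_1+\dots+n_d=n-k\}$ by the sum over $\{n_1+\dots+n_d=n\}$ up to a uniform $O(kn^{d-2})$ error, uses Lemma \ref{Lem:k-bn} to substitute $n_j/a_k\approx (n_j/n)(b_n/k)^{1/\alpha}$, reads the resulting double sum as a single Riemann sum in the variables $(n_1/n,\dots,n_{d-1}/n,\,k/b_n)$, and finishes with the explicit change of variables $y_j=r^{-1/\alpha}\theta_j$, whose Jacobian produces the factor $(\sum_j y_j)^{d+\alpha}$. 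You instead test against $h\in C_c([0,\infty)^d)$ (legitimate here, and it avoids the Laplace-transform criterion since all the measures live in a fixed compact set), do $(d-1)$-dimensional Riemann sums slice by slice, and then a one-dimensional Riemann sum in $k$; the obstacle you single out --- differentiating $a_k$ --- is exactly what the paper avoids, and your proposed fix (push the $k$-sum through Lemma \ref{Lem:k-bn} and the uniform convergence theorem, i.e.\ do the Riemann sum in the variable $k/b_n$ rather than estimating $a_{k+1}-a_k$) is the same device the paper uses, so your route is viable. Two details to repair when writing it up: the slice level is $(n-k)/a_k\approx n/a_k$, the \emph{reciprocal} of your $s=a_k/n$, so your displayed limit with slices $\{\sum_j y_j=s\}$ and prefactor $s^{d-1}$ is inconsistent as written; taking $s=n/a_k$ throughout gives $a_k^{d-1}/n^{d-1}=s^{-(d-1)}$ and, via $r=k/b_n\approx s^{-\alpha}$, a factor $\alpha s^{-\alpha-1}\rd s$, which yields directly $\int_{1/c}^{c}\big(\int_{\{\sum_j y_j=t\}}h\,\rd\sigma\big)\,t^{-d-\alpha}\,\rd t$, matching the claimed density. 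You should also record that $t\mapsto\int_{\{\sum_j y_j=t\}}h\,\rd\sigma$ is continuous (dominated convergence), so the outer Riemann sum converges, and that $k\le b_{cn}=o(n)$ makes the replacement of the slice level $(n-k)/a_k$ by $n/a_k$ uniform over the range of $k$.
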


\begin{proof}
It is sufficient to prove convergences of Laplace transforms, as stated in \cite[Theorem 5.3 and Theorem 5.22]{Kal02}. More specifically, we only need to show that, for any $\lambda_1,\dots,\lambda_d\geq0$,
\begin{equation}
\begin{split}\label{conv-Lap}
	&\int_0^\infty 
	\exp\bigg(-\sum_{j=1}^d \lambda_j y_j\bigg)\frac{\Lambda_n(\rd y_1\dots \rd y_d)}{\alpha b_n n^{d-1}}
	\\
    &\to
    \int_{\substack{y_1,\dots,y_d\geq0\\[2pt] c^{-1}< \sum_{j=1}^d y_j < c}}
    \exp\bigg(-\sum_{j=1}^d \lambda_j y_j\bigg)
    \frac{\rd y_1\cdots\rd y_d}{(\sum_{j=1}^d y_j)^{d+\alpha}}
	      ,
	\quad
	\text{as $n\to\infty$.} 
\end{split}
\end{equation}
Let us prove \eqref{conv-Lap}. Note that
\begin{equation}
\begin{split}
    &\sum_{\substack{n_1,\dots,n_d \\ n_1+\dots+n_d=n}}
	\exp\bigg(-\sum_{j=1}^d \lambda_j n_j/a_k\bigg)
	\\
	&=
	\sum_{\substack{n_1,\dots,n_d \\ n_1+\dots+n_d=n\\ n_1\geq k}}
	\exp\bigg(-\sum_{j=1}^d \lambda_j n_j/a_k\bigg)
	+
	\sum_{\substack{n_1,\dots,n_d \\ n_1+\dots+n_d=n\\ n_1< k}}
	\exp\bigg(-\sum_{j=1}^d \lambda_j n_j/a_k\bigg)
	\\
	&=
	\exp(-\lambda_1 k/a_k)\sum_{\substack{n_1,\dots,n_d \\ n_1+\dots+n_d=n-k}}
	\exp\bigg(-\sum_{j=1}^d \lambda_j n_j/a_k\bigg)
	+
	O(k n^{d-2})
	\\
	&=
	\sum_{\substack{n_1,\dots,n_d \\ n_1+\dots+n_d=n-k}}
	\exp\bigg(-\sum_{j=1}^d \lambda_j n_j/a_k\bigg)
	+
	O(b_n n^{d-2}),
\end{split}	
\end{equation}
uniformly in $k$ for $k\in [b_{n/c},b_{cn})$, as $n\to\infty$.
Therefore
\begin{equation}
\begin{split}
		&\int_0^\infty 
	\exp\bigg(-\sum_{j=1}^d \lambda_j y_j\bigg)\frac{\Lambda_n(\rd y_1\dots \rd y_d)}{\alpha b_n n^{d-1}}
	\\
	&
	=
	\frac{1}{\alpha b_n n^{d-1}}\sum_{k\in [b_{n/c},b_{cn})}\sum_{\substack{n_1,\dots,n_d \\ n_1+\dots+n_d=n}}
	\exp\bigg(-\sum_{j=1}^d \lambda_j n_j/a_k\bigg)
	+o(1)
	\\
	&
	=
	\frac{1}{\alpha b_n n^{d-1}}\sum_{k\in [b_{n/c},b_{cn})}\sum_{\substack{n_1,\dots,n_d \\ n_1+\dots+n_d=n}}
	\exp\bigg(-\sum_{j=1}^d \lambda_j \frac{n_j}{n}\bigg(\frac{b_n}{k}\bigg)^{1/\alpha}\bigg)
	+o(1)
	\\
	&\to
	\frac{1}{\alpha}
	\int_{c^{-\alpha}}^{c^\alpha}
	\rd r
	\int_{\substack{\theta_1,\dots,\theta_{d-1}\geq0 \\ \theta_1+\dots+\theta_{d-1}\leq 1}}
	\rd \theta_1\dots \rd\theta_{d-1}
	\exp\bigg(-\bigg(\sum_{j=1}^{d-1}\lambda_j\theta_j +\lambda_d(1-\sum_{j=1}^{d-1}\theta_j)\bigg)r^{-1/\alpha}\bigg),
\end{split}
\end{equation}
as $n\to\infty$. Here we used Lemma \ref{Lem:k-bn} and the convergence of Riemann sums to the corresponding integral. Let us consider the change of variables $y_j=r^{-1/\alpha}\theta_j$ ($j=1,\dots,d-1$) and $y_d=r^{-1/\alpha}(1-\sum_{j=1}^{d-1}\theta_j)$. Then the corresponding Jacobian determinant is
\begin{equation}
  \frac{\partial(y_1,\dots,y_d)}{\partial(r, \theta_1,\dots, \theta_{d-1})}
  =
  (-1)^{d}\alpha^{-1}r^{-(d+\alpha)/\alpha}
  =
  (-1)^{d}\alpha^{-1}\bigg(\sum_{j=1}^d y_j\bigg)^{d+\alpha}.	
\end{equation}
Therefore the change of variables theorem yields
\begin{equation}
\begin{split}
	&
	\frac{1}{\alpha}\int_{c^{-\alpha}}^{c^\alpha}
	\rd r
	\int_{\substack{\theta_1,\dots,\theta_{d-1}\geq0 \\ \theta_1+\dots+\theta_{d-1}\leq 1}}
	\rd \theta_1\dots \rd\theta_{d-1}
	\exp\bigg(-\bigg(\sum_{j=1}^{d-1}\lambda_j\theta_j +\lambda_d(1-\sum_{j=1}^{d-1}\theta_j)\bigg)r^{-1/\alpha}\bigg)
	\\
	&=
	\int_{\substack{y_1,\dots,y_d\geq0\\[2pt] c^{-1}< \sum_{j=1}^d y_j < c}}
	\exp\bigg(-\sum_{j=1}^d \lambda_j y_j\bigg)
    \frac{\rd y_1\cdots\rd y_d}{(\sum_{j=1}^d y_j)^{d+\alpha}}.
\end{split}	
\end{equation}
We now complete the proof of \eqref{conv-Lap}.
\end{proof}

We now prove Theorem \ref{Thm:conv-of-density} by using the local limit theorem and the local large deviation (Propositions \ref{Prop:loc-lim} and \ref{Prop:large-deviation}).

\begin{proof}[Proof of Theorem \ref{Thm:conv-of-density}]
Let $\varepsilon>0$ be an arbitrarily small positive number. Take a constant $c>1$ large enough so that 
\begin{align}\label{Thm:conv-of-density:K}
	\max\{\bP[W \not\in(c^{-\alpha}, c^{\alpha})],\; c^{-d+\alpha},\; c^{-2\alpha} \}<\varepsilon.
\end{align}
For $n\geq c$, set
\begin{align*}
	&J^{(0)}_n=\{(n_1,\dots,n_d,k)\in\bZ_{\geq0}^{d}\times \bN:n_1+\dots+n_d+k=n \;\;\text{and}\;\; b_{cn}\leq k\},
	\\
	&J^{(1)}_n=\{(n_1,\dots,n_d,k)\in\bZ_{\geq0}^{d}\times \bN:n_1+\dots+n_d+k=n \;\;\text{and}\;\; b_{n/c}\leq k < b_{cn}\},
	\\
	&J^{(2)}_n=\{(n_1,\dots,n_d,k)\in\bZ_{\geq0}^{d}\times \bN:n_1+\dots+n_d+k=n \;\;\text{and}\;\; k< b_{n/c} \}.	
\end{align*}
We decompose $T_{n,g}$ as $T_{n,g}=\sum_{i=0,1,2}T^{(i)}_{n,g}$ where
\begin{align*}
	T^{(i)}_{n,g}=\sum_{(n_1,\dots,n_d,k)\in J^{(i)}_n} g\Big(\frac{n_1}{n},\dots,\frac{n_d}{n},\frac{k}{b_n}\Big)T_{n_1,\dots,n_d}(k),
	\quad i=0,1,2.
\end{align*}
In the following we show that $(\Gamma(\alpha)n/b_n)T^{(1)}_{n,g}$ is approximately equal to $\bE \big[ g\big((U_j)_{j=1}^d, W \big) \big]P$, and that the contributions of $T^{(0)}_{n,g}$ and $T^{(2)}_{n,g}$ are negligibly small.

\emph{Estimate of $T^{(1)}_{n,g}$.}
Let us estimate $T^{(1)}_{n,g}$.  Proposition \ref{Prop:loc-lim} yields that
\begin{align}
	\sup_{(n_1,\dots,n_d,k)\in J^{(1)}_n}  \bigg\|a_k^d T_{n_1,\dots,n_d}(k)
	-\bigg(\prod_{j=1}^d \psi_j\Big(\frac{n_j}{a_k}\Big)\bigg)P\bigg\|_{\mathscr{L}(\cB)}\to 0,
	\quad
	\text{as $n\to\infty$}.
\end{align}
 Hence we have
\begin{align}
\begin{split}
   &\bigg\|T^{(1)}_{n,g}-\sum_{(n_1,\dots,n_d,k)\in J^{(1)}_n} g\Big(\frac{n_1}{n},\dots,\frac{n_d}{n} ,\frac{k}{b_n}\Big) \frac{1}{a_k^d}\bigg(\prod_{j=1}^d \psi_j\Big(\frac{n_j}{a_k}\Big)\bigg)P\bigg\|_{\mathscr{L}(\cB)}
\\
 &=o\Big(\sum_{(n_1,\dots,n_d,k)\in J^{(1)}_n}\frac{1}{a_k^d}\Big),
     \quad\text{as $n\to\infty$}.     
\label{Thm:conv-of-density-2}
\end{split}
\end{align}
Recall $b_t\leq k$ if and only if $t< a_k$. Hence the relation $b_{n/c}\leq k < b_{cn}$ is equivalent to $n/c<a_{k}\leq cn$, which implies
\begin{equation}
	o\bigg(\sum_{(n_1,\dots,n_d,k)\in J^{(1)}_n}\frac{1}{a_k^d}\bigg)
	=
	o\bigg(\sum_{\substack{n_1,\dots,n_{d-1}\leq c a_k \\ b_{n/c}\leq k < b_{cn}}}
	\frac{1}{a_k^d}\bigg)
	=o\bigg(
	\sum_{b_{n/c}\leq k < b_{cn}}\frac{1}{a_k}
	\bigg),\quad
	\text{as $n\to\infty$.}
\end{equation}
By using $\sum_{k>n}a_k^{-1}
 = O(na_n^{-1})$ and $a_{b_n}\sim n$, as $n\to\infty$, we obtain
\begin{equation}
	 o\bigg(
	\sum_{b_{n/c}\leq k < b_{cn}}\frac{1}{a_k}
	\bigg)
	=
	o\bigg(\frac{b_n}{a_{b_n}}\bigg)
	=
	o\bigg(\frac{b_n}{n}\bigg),
	\quad
	\text{as $n\to\infty$.}
\end{equation}

Define a finite measure  $\Lambda_n$ on $[0,\infty)^d$ as in Lemma \ref{Lem:Lambda-n}. 
By Lemmas \ref{Lem:k-bn} and \ref{Lem:Lambda-n} and Definition \ref{DEF:gen-unif-dist}, we see
\begin{align}
\begin{split}
  &\sum_{(n_1,\dots,n_d,k)\in J^{(1)}_n} 
  g\Big(\frac{n_1}{n},\dots,\frac{n_d}{n} ,\frac{k}{b_n}\Big)
  \frac{1}{a_k^d}\prod_{j=1}^d \psi_j\Big(\frac{n_j}{a_k}\Big)
  %%%%%%%%%%%%%%%%%%%%%%%%%%
  \\
  &\sim
  \sum_{(n_1,\dots,n_d,k)\in J^{(1)}_n}
  g\Big(\frac{n_1}{n-k},\dots,\frac{n_d}{n-k} ,\Big(\frac{a_k}{n}\Big)^\alpha\:\Big)
  \Big(\frac{n-k}{na_k}\Big)^d\prod_{j=1}^d \psi_j\Big(\frac{n_j}{a_k}\Big)
  \\
  &=
%  %%%%%%%%%%%%%%%%%%%%%%%%%%
%  \sum_{(n_1,\dots,n_d,k)\in J^{(1)}_n} 
%  g\Big(\frac{n_1}{n},\dots,\frac{n_d}{n} ,\Big(\frac{a_k}{n}\Big)^\alpha \Big)
%  \frac{1}{(b^{-1}(k))^d}\prod_{i=1}^d \psi_i\Big(\frac{n_i}{b^{-1}(k)}\Big)
%  %%%%%%%%%%%%%%%%%%%%%%%%%%
%  \\
%  &\sim
  %%%%%%%%%%%%%%%%%%%%%%%%%%
  \int_{[0,\infty)^d} \Lambda_n(\rd y_1\cdots\rd y_{d})\:
  g\Big(\frac{y_1}{\sum_{j} y_j},\dots,\frac{y_d}{\sum_{j} y_j},\frac{1}{(\sum_{j} y_j)^\alpha}\Big)\Big(\frac{\sum_{j} y_j}{n}\Big)^d 
  \prod_{j=1}^d \psi_j(y_j)
  \\
  &\sim
	%%%%%%%%%%%%%%%%%%%%%%%%%%%%%%
	\frac{\alpha b_n}{n}%\underset{\sum_{i=1}^d y_i\in[K^{-1},K]}
	\int_{\substack{y_1,\dots,y_d\geq0\\[2pt] c^{-1}< \sum_{j=1}^d y_j < c}}
	 g\Big(\frac{y_1}{\sum_j y_j},\dots,\frac{y_d}{\sum_j y_j},\frac{1}{(\sum_j y_j)^\alpha}\Big)
	\frac{\prod_j \psi_j(y_j)}{(\sum_j y_j)^\alpha}
	\rd y_1\cdots\rd y_d
	%%%%%%%%%%%%%%%%%%%%%%%%%%%%%%
	\\[3pt]
	&=
	%%%%%%%%%%%%%%%%%%%%%%%%%%%%%%
	\frac{b_n}{\Gamma(\alpha)n} \bE\Big[g\big(U_1,\dots,U_d, W\big)1_{\{c^{-\alpha}< W < c^\alpha\}}\Big],
	\label{Thm:conv-of-density-4}
\end{split}
\end{align}
as $n\to\infty$. 
Since $\bP[W\not\in(c^{-\alpha},c^{\alpha})]<\varepsilon$, we now obtain
\begin{align}
   \limsup_{n\to\infty} \bigg\|
	\frac{\Gamma(\alpha)n}{b_n}T^{(1)}_{n,g}-
	\bE\Big[g\big((U_j)_{j=1}^d, W\big)\Big]P
	\bigg\|_{\mathscr{L}(\cB)}\leq (\sup|g|)\varepsilon.
	\label{estimate-T1}
\end{align}

\emph{Estimate of $T^{(0)}_{n,g}$.}  Next, let us show that the contribution of $T^{(0)}_{n,g}$ is negligibly small.
In a similar way as in the estimate of $T^{(1)}_{n,g}$, we have
\begin{align}
	\|T^{(0)}_{n,g}\|_{\mathscr{L}(\cB)}
	&\leq 
	   \sum_{(n_1,\dots,n_d,k)\in J^{(0)}_n} \frac{(\sup|g|)(\prod_j \sup\psi_j)}{a_k^d}
	    +
	    o\Big(\frac{b_n}{n}\Big),
	\quad
	\text{as $n\to\infty$.}
\end{align}
It is easily check that
\begin{align}
   \sum_{(n_1,\dots,n_d,k)\in J^{(0)}_n} \frac{1}{a_k^d}
   \leq
  \sum_{\substack{n_1,\dots,n_{d-1}\leq c^{-1}a_k \\ b_{n/c}\leq k < b_{cn}}}
	\frac{1}{a_k^d}
   \leq c^{-d+1}\sum_{k>b_{cn}}\frac{1}{a_k}
   \sim \frac{c^{-d+\alpha}}{\alpha^{-1}-1}\frac{b_n}{n},
   \quad
   \text{as $n\to\infty$.}	
\end{align}
Since $c^{-d+\alpha}<\varepsilon$, we obtain
\begin{align}
   \limsup_{n\to\infty}\frac{n}{b_n}\|T^{(0)}_{n,g}\|_{\mathscr{L}(\cB)}
   \leq\Big(\frac{(\sup|g|)(\prod_j \sup\psi_j)}{\alpha^{-1}-1}\Big)\varepsilon.
	\label{estimate-T0}	
\end{align}

\emph{Estimate of $T^{(2)}_{n,g}$.}  Let us show that the contribution of $T^{(2)}_{n,g}$ is also negligibly small. Note that 
\begin{align}
	\|T^{(2)}_{n,g}\|_{\mathscr{L}(\cB)}
	\leq (\sup|g|) \sum_{k\leq b_{n/c}}\sum_{\substack{n_1,\dots,n_d\\ n_1+\dots+n_d+k=n}}\|T_{n_1,\dots,n_d}(k)\|_{\mathscr{L}(\cB)}. 
\end{align}
By Proposition \ref{Prop:large-deviation}, we can take a constant $C>0$ independently of the choice of $c>1$ such that
\begin{align}
    \sum_{k\leq b_{n/c}}\sum_{\substack{n_1,\dots,n_d\\ n_1+\dots+n_d+k=n}}\|T_{n_1,\dots,n_d}(k)\|_{\mathscr{L}(\cB)}
    &\leq
     \frac{C}{nb_n}\sum_{k\leq b_{n/c}}k
    \leq 
    \frac{Cb_{n/c}^2}{2nb_n}
    \sim
    \frac{C c^{-2\alpha} b_n}{2n}.	
\end{align}
Since $c^{-2\alpha}<\varepsilon$, we obtain
\begin{align}
    \limsup_{n\to\infty}\frac{n}{b_n}\|T^{(2)}_{n,g}\|_{\mathscr{L}(\cB)}
	\leq
	\frac{(\sup|g|)C}{2}\varepsilon.
	\label{estimate-T2}
\end{align}

\emph{Estimate of $T_{n,g}$.}
By \eqref{estimate-T1}, \eqref{estimate-T0} and \eqref{estimate-T2}, we can take a constant $C'>0$ independently of the choice of $\varepsilon>0$ so that 
\begin{align}
   \limsup_{n\to\infty}
   \bigg\|
   \frac{\Gamma(\alpha)n}{b_n}T_{n,g}
   -
   \bE\big[g\big((U_j)_{j=1}^d, W\big)\big]P
   \bigg\|_{\mathscr{L}(\cB)}
   \leq C'\varepsilon.
\end{align}
Since $\varepsilon>0$ was arbitrarily chosen, we finally obtain the desired convergence \eqref{Thm:conv-of-density-0}.
\end{proof}

\section{Proof of Theorem \ref{Thm:extend-observable}}
\label{sec:proof2}

Set
\begin{equation}
	w(n)
	:=
	\frac{\Gamma(\alpha)n}
   {b_n}
   \sim
   \frac{\pi}{\sin(\pi\alpha)}n^{1-\alpha}\ell(n),
   \quad
   \text{as $n\to\infty$}.
\end{equation}

\begin{Lem}\label{Lem:extend-1}
For any $\varepsilon>0$, there exist $\delta\in(0,1/2)$ such that, for any $j=1,\dots,d$ and for any positive integers $n\geq 1/\delta$ and $m\leq \delta n$,
\begin{equation}
\begin{split}
	&w(n)\bigg\|
	1_Y L^n
    \bigg[
    g
    \bigg(\frac{S_{A_1}(n)}{n},\dots, \frac{S_{A_d}(n)}{n},  \frac{S_Y(n)}{b_n}
    \bigg) 
    u_{j,m} 
    \bigg]
    -
    T_{n-m,g}(L^m u_{j,m})\bigg\|_{\cB}
    \\
    &\leq 
    \varepsilon \| L^m u_{j,m} \|_{\cB}.
\end{split}
\end{equation}
\end{Lem}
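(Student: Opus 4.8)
The plan is to compare the two observables on the event $X_{j,m}$ by splitting the orbit $(f^k(x))_{k=0}^n$ at time $m$, the moment at which $L^m u_{j,m}$ becomes supported on $Y$. First I would recall that $u_{j,m}=u\mathbf 1_{X_{j,m}}$ is supported on $X_{j,m}=\bigcap_{k=0}^{m-1}f^{-k}(A_j)\cap f^{-m}(Y)$, so that for a point $x$ in the support we have $f^k(x)\in A_j$ for $0\le k\le m-1$ and $f^m(x)\in Y$; consequently, writing $y=f^m(x)$, the occupation times over $[1,n]$ decompose as $S_{A_j}(n)(x)=(m-1)+S_{A_j}(n-m)(y)$ when the $k=0$ term is excluded appropriately, $S_{A_i}(n)(x)=S_{A_i}(n-m)(y)$ for $i\neq j$, and $S_Y(n)(x)=S_Y(n-m)(y)$. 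The key identity $1_Y L^n[h\, u_{j,m}] = 1_Y L^{n-m}\big[(h\circ f^m)\, L^m u_{j,m}\big]$ (valid since $L$ is the transfer operator and multiplying by a function of the first $m$ coordinates commutes past $L^m$ in the appropriate dual sense) then lets me rewrite the first term inside the norm as $1_Y L^{n-m}$ applied to $L^m u_{j,m}$ weighted by $g$ evaluated at the shifted arguments $\big((m-1+S_{A_1}(n-m))/n,\dots, S_Y(n-m)/b_n\big)$, whereas $T_{n-m,g}(L^m u_{j,m})$ carries $g$ evaluated at $\big(S_{A_1}(n-m)/(n-m),\dots, S_Y(n-m)/b_{n-m}\big)$.

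Next I would estimate the difference of these two weighted operators purely through the modulus of continuity of $g$. Since $g$ is bounded and uniformly continuous on the compact-in-the-first-$d$-coordinates region, and in the last coordinate we can restrict to a bounded range (or use uniform continuity on $[0,1]^d\times[0,R]$ plus a separate tail bound for $S_Y(n-m)/b_n$ large — though in fact $S_Y(\cdot)\le$ the number of returns which is controlled), for $m\le\delta n$ the arguments differ by at most $O(\delta)$ in each of the first $d$ coordinates (because $(m-1)/n\le\delta$ and $|1/n - 1/(n-m)|\cdot S_{A_i}(n-m)\le \delta/(1-\delta)$), and in the last coordinate $b_n/b_{n-m}\to1$ uniformly for $m\le\delta n$ by the uniform convergence theorem for regularly varying functions \cite[Theorem 1.5.2]{BGT}, so $|S_Y(n-m)/b_n - S_Y(n-m)/b_{n-m}|$ is small. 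Choosing $\delta$ small enough that the total oscillation of $g$ over arguments this close is below $\varepsilon/2$ (say), I bound the operator-norm difference by $(\varepsilon/2)\,\|L^{n-m}L^m u_{j,m}\|_{\cB}$-type quantities; but here I must be careful, because I want the bound in terms of $\|L^m u_{j,m}\|_{\cB}$, not $\|L^{n-m}(\cdots)\|_{\cB}$. The resolution is that $1_Y L^{n-m}$ acting with an $L^\infty$-bounded weight is, by Assumption \ref{asmp:ape-ren}\eqref{asmp:ape-ren-bdd} and the decomposition into the $R_{j,n}$ blocks together with $\|R\|_{\mathscr L(\cB)}$-boundedness, a bounded operator on $\cB$ with norm uniformly bounded in $n$; so $\|1_Y L^{n-m}[\,\cdot\,L^m u_{j,m}]\|_{\cB}\le C\|L^m u_{j,m}\|_{\cB}$, and after absorbing the constant $C$ into the choice of $\delta$ we get the stated bound with $\varepsilon$ on the right.

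The main obstacle I anticipate is making the operator-norm (not pointwise) version of the continuity argument rigorous: both $1_Y L^n[g(\cdots)u_{j,m}]$ and $T_{n-m,g}(L^mu_{j,m})$ are operators on $\cB$, and the difference $g(\text{shifted args})-g(\text{args})$ depends on the point $x$ through the random occupation times, so I cannot simply pull a scalar $\sup$-bound of $|g-g|$ out of the operator. The right way is to re-expand both sides as sums over $(n_1,\dots,n_d,k)$ of the block operators $T_{n_1,\dots,n_d}(k)$ (composed with $L^m$ on $X_{j,m}$), observe that on the block where $\varphi_k=n-m$ with $S_{A_i}(\varphi_k)=n_i$ the values of $g$ in both expressions are deterministic numbers that differ by at most the oscillation bound above, and then bound the difference blockwise by (oscillation) $\times \sum_{n_1,\dots,n_d,k}\|T_{n_1,\dots,n_d}(k)(L^m u_{j,m}\text{-part})\|_{\mathscr L(\cB)}$, the latter sum being $\le C\|L^m u_{j,m}\|_{\cB}$ by the same boundedness of $1_Y L^{n-m}$ used above (equivalently, by $\sum_{\text{all }(n_1,\dots,n_d,k),\,\sum n_i + k \le n-m}\|T_{n_1,\dots,n_d}(k)\|_{\mathscr L(\cB)}$ being controlled, which follows from $R=R(1,\dots,1)$ having spectral radius $1$ and the summability in Assumption \ref{asmp:ape-ren}). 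Once this blockwise bookkeeping is set up, choosing $\delta$ to control the oscillation of $g$ and the ratio $b_n/b_{n-m}$ finishes the proof.
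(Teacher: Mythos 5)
Your overall route is the same as the paper's: split the orbit at time $m$ so that $1_YL^n[g(\cdots)u_{j,m}]$ becomes $1_YL^{n-m}$ applied to $L^mu_{j,m}$ with shifted arguments of $g$, expand both this and $T_{n-m,g}(L^mu_{j,m})$ blockwise over the slice $\{n_1+\cdots+n_d+k=n-m\}$ in terms of the operators $T_{n_1,\dots,n_d}(k)$, and bound the difference by the oscillation of $g$ times the summed operator norms, using uniform continuity of $g$ on a compact set for the range $k<b_{cn}$, a crude $\sup|g|$ bound on the tail $k\ge b_{cn}$, and the uniform convergence theorem for regular variation to compare $(k+1)/b_n$ with $k/b_{n-m}$ and $w(n)$ with $w(n-m)$. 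All of that matches the paper.

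There is, however, one genuine quantitative gap. You bound the slice sum $\sum_{n_1+\cdots+n_d+k=n-m}\|T_{n_1,\dots,n_d}(k)\|_{\mathscr{L}(\cB)}$ only by a constant $C$ uniform in $n$ (``$1_YL^{n-m}$ with a bounded weight is a bounded operator on $\cB$ with norm uniformly bounded in $n$''), and conclude $\|\text{difference}\|_{\cB}\le \operatorname{osc}(g)\,C\,\|L^mu_{j,m}\|_{\cB}$. That does not prove the lemma: the asserted inequality carries the prefactor $w(n)=\Gamma(\alpha)n/b_n\to\infty$, so a bound on the slice sum by a constant is useless, and ``absorbing $C$ into the choice of $\delta$'' cannot absorb a factor tending to infinity. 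What is actually needed — and what the paper extracts from the proof of Theorem \ref{Thm:conv-of-density} with $g\equiv1$ (equivalently \cite[Theorem 1.4]{Gou11}), via Proposition \ref{Prop:loc-lim} for $k\in[b_{n/c},b_{cn})$ and Proposition \ref{Prop:large-deviation} together with the $J_n^{(0)}$-estimate for the tails — is the sharper asymptotic $\sum_{n_1+\cdots+n_d+k=n-m}\|T_{n_1,\dots,n_d}(k)\|_{\mathscr{L}(\cB)}\le(1+\varepsilon)/w(n-m)$ for the part $k<b_{c(n-m)}$, plus a tail of size $\varepsilon/w(n-m)$. Combined with $\sup_{m\le\delta n}|1-w(n)/w(n-m)|\le\varepsilon$, this supplies the factor $1/w(n)$ that cancels the prefactor. (Your parenthetical fallback, bounding the cumulative sum over all slices $\sum n_i+k\le n-m$, is also off: that quantity grows like $(n-m)/(\alpha w(n-m))$ and is not even bounded.) With this renewal asymptotic inserted in place of the constant $C$, your argument closes and coincides with the paper's proof.
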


\begin{proof}
It is sufficient to consider the case $j=1$. 
For $n\geq m\geq1$,
\begin{equation}\label{Lem:extend-1-1}
\begin{split}
  &1_YL^n
   \bigg[g\bigg(\frac{S_{A_1}(n)}{n},\dots, \frac{S_{A_d}(n)}{n},  \frac{S_Y(n)}{b_n}\bigg) u_{1,m} \bigg]
  \\
  &=
  1_YL^{n-m}
  \bigg[
  g\bigg(\frac{S_{A_1}(n-m)+m-1}{n}, \bigg(\frac{S_{A_i}(n-m)}{n}\bigg)_{i=2}^d, \frac{S_Y(n-m)+1}{b_n}\bigg)
  L^m u_{1,m}
  \bigg]
  \\
  &=
  \sum_{\substack{n_1,\dots,n_d,k \\ n_1+\dots+n_d+k=n-m}}
  g
  \bigg(\frac{n_1+m-1}{n}, \frac{n_2}{n},\dots, \frac{n_{d}}{n}, \frac{k+1}{b_n}
   \bigg)
      T_{n_1,\dots,n_d}(k)
      (L^m u_{1,m})
  ,
\end{split}
\end{equation}
and for $n>m\geq1$,
\begin{equation}
\begin{split}
	 &T_{n-m,g}(L^m u_{1,m})
	 \\
	 &=
  \sum_{\substack{n_1,\dots,n_d,k \\ n_1+\dots+n_d+k=n-m}}
  g
  \bigg(\frac{n_1}{n-m},\dots, \frac{n_{d}}{n-m}, \frac{k}{b_{n-m}}
   \bigg)
      T_{n_1,\dots,n_d}(k)
      (L^m u_{1,m}).
\end{split}	
\end{equation}

Fix $\varepsilon >0$ arbitrarily. As in the proof of Theorem \ref{Thm:conv-of-density}, we can take $c>1$ large enough so that,
\begin{equation}
	\limsup_{n\to\infty}
	\bigg(w(n)\sum_{k< b_{cn}}
   \sum_{\substack{n_1,\dots,n_d \\ n_1+\dots+n_d+k=n}}
   \| T_{n_1,\dots,n_d}(k)\|_{\mathscr{L}(\cB)}\bigg)
   <
   1+\varepsilon,
\end{equation}
and,
\begin{equation}
\sup_{n}\bigg(w(n)\sum_{k\geq b_{cn}}
\sum_{\substack{n_1,\dots,n_d \\ n_1+\dots+n_d+k=n}}\sup|g|
   \| T_{n_1,\dots,n_d}(k)\|_{\mathscr{L}(\cB)}\bigg)
   < 
   \varepsilon. 	
\end{equation}
Take $\delta_0\in(0,1/2)$ small enough so that, if $(x_1,\dots,x_{d+1}), (y_1,\dots,y_{d+1})\in[0,1]^d\times [0,\infty)$ satisfies
\begin{equation}
  \max_{j=1,\dots,d+1}|x_j-y_j|\leq \delta_0
  \quad\text{and}\quad
  \max\{x_{d+1}, y_{d+1}\}\leq \sup_{n}\frac{b_{cn}}{b_{n/2}},	
\end{equation}
then
\begin{equation}
	|g(x_1,\dots,x_{d+1})-g(y_1,\dots,y_{d+1})|\leq \varepsilon.
\end{equation}
For any positive integers $n\geq 1/\delta_0$ and $ m\leq \delta_0 n$ and non-negative integer $n'\leq n-m-1$,
\begin{equation}
 	0\leq
 	\frac{n'+m-1}{n}-\frac{n'}{n-m}
 	\bigg(=
 	\frac{m(n-m-n')}{n(n-m)}-\frac{1}{n}\leq\frac{m}{n}\bigg)\leq\delta_0
\end{equation}
and
\begin{equation}
	0\geq \frac{n'}{n}-\frac{n'}{n-m}
	 \bigg(= -\frac{mn'}{n(n-m)}\geq -\frac{m}{n}\bigg) 
	\geq -\delta_0.
\end{equation}

By the uniform convergence theorem for regular varying functions, we can take $\delta\in(0, \delta_0)$ small enough so that, for any non-negative integers $n\geq 1/\delta$, $k\leq b_{cn}-1$ and $1\leq m\leq \delta n $, it holds that
\begin{equation}
  \bigg|\frac{k+1}{b_n}-\frac{k}{b_{n-m}}\bigg|\leq \delta_0,
   \quad
   \bigg|1-\frac{w(n)}{w(n-m)}\bigg|\leq \varepsilon,	
\end{equation}
and
\begin{equation}
	\bigg(w(n)\sum_{k< b_{c(n-m)}}
   \sum_{\substack{n_1,\dots,n_d \\ n_1+\dots+n_d+k=n-m}}
   \| T_{n_1,\dots,n_d}(k)\|_{\mathscr{L}(\cB)}\bigg)
   \leq
   1+\varepsilon.
\end{equation}
Hence
\begin{equation}
\begin{split}
	&\bigg\|
	1_Y L^n
    \bigg[
    g
    \bigg(\frac{S_{A_1}(n)}{n},\dots, \frac{S_{A_d}(n)}{n},  \frac{S_Y(n)}{b_n}
    \bigg) 
    u_{j,m} 
    \bigg]
    -
    T_{n-m,g}(L^m u_{j,m})
    \bigg\|_{\cB}
    \\[5pt]
    &\leq
    \varepsilon
    \sum_{k<b_{c(n-m)}}
    \sum_{\substack{n_1,\dots,n_d \\ n_1+\dots+n_d+k=n-m}}
     \|T_{n_1,\dots,n_d}(k)\|_{\mathscr{L}(\cB)}
     \|L^m u_{1,m}\|_{\cB}
    \\[5pt]
    &\hspace{15pt}+
    2\sum_{k\geq b_{c(n-m)}}
    \sum_{\substack{n_1,\dots,n_d \\ n_1+\dots+n_d+k=n-m}}
    \sup|g|\| T_{n_1,\dots,n_d}(k)\|_{\mathscr{L}(\cB)}
    \|L^m u_{1,m}\|_{\cB}
    \\
    &\leq
    \frac{\varepsilon(3+\varepsilon)}{w(n-m)}\|L^m u_{1,m}\|_{\cB}
    \leq
    \frac{\varepsilon(3+\varepsilon)(1+\varepsilon)}{w(n)}\|L^m u_{1,m}\|_{\cB}.
\end{split}
\end{equation}
Since $\varepsilon>0$ was arbitrary, we complete the proof.
\end{proof}

\begin{Lem}\label{Lem:extend-2}
For any $\delta\in(0, 1)$ and $j=1,\dots,d$,
\begin{equation}
	\lim_{n\to\infty}\sum_{1\leq m\leq \delta n}
	\bigg\|w(n)T_{n-m,g}(L^m u_{j,m})
	-
	\bE[g(U_1,\dots,U_d, W)]\int_{X}u_{j,m}\:\rd\mu\bigg\|_{\cB}=0.
\end{equation}	
\end{Lem}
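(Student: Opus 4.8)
The plan is to reduce this to Theorem \ref{Thm:conv-of-density} applied termwise and then to control the resulting sum over $m$ by a dominated-convergence argument using the hypotheses \eqref{Thm:extend-observable-1} on $\|L^m u_{j,m}\|_{\cB}$. First I would fix $j$ and $\delta\in(0,1)$ and write $v_m = L^m u_{j,m} \in \cB$, noting $\int_Y v_m\,\rd\mu = \int_X u_{j,m}\,\rd\mu =: c_m$, so that by Theorem \ref{Thm:conv-of-density} we have $w(n)T_{n,g} \to \bE[g(U_1,\dots,U_d,W)]P$ in $\mathscr{L}(\cB)$, hence $w(N)T_{N,g}v_m \to \bE[g((U_j)_j,W)]\,c_m\, 1_Y$ in $\cB$ for each fixed $m$ as $N\to\infty$. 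The issue is that in the sum the index $N = n-m$ also varies with $m$, and $m$ ranges up to $\delta n\to\infty$, so one cannot apply the pointwise convergence directly to each summand with $N\to\infty$; instead one splits the sum at a fixed threshold $M$.

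Concretely, I would use the quantitative bound
\[
\sup_{N\ge 1}\ w(N)\,\|T_{N,g}\|_{\mathscr{L}(\cB)} \ \le\ C_g < \infty,
\]
which follows from Theorem \ref{Thm:conv-of-density} together with $\sup_N w(N)\sum_{k<b_{cN}}\sum\|T_{n_1,\dots,n_d}(k)\|_{\mathscr{L}(\cB)}<\infty$ (the estimate already extracted in the proof of Lemma \ref{Lem:extend-1}), plus the bound on the tail $k\ge b_{cN}$; also $\|1_Y\|_\cB<\infty$. Then for $1\le m\le \delta n$ write $N=n-m\ge(1-\delta)n$, and since $w(n)/w(n-m)\to 1$ uniformly for $m\le\delta n$ (uniform convergence theorem for regularly varying functions, as used in Lemma \ref{Lem:extend-1}), each summand is bounded:
\[
\Big\|w(n)T_{n-m,g}v_m - \bE[g((U_j)_j,W)]\,c_m\,1_Y\Big\|_\cB
\ \le\ C\,\|v_m\|_\cB
\]
for all large $n$. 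Given $\eta>0$, by the first hypothesis in \eqref{Thm:extend-observable-1} choose $M$ so that $\sum_{m>M}\|v_m\|_\cB<\eta$; the tail $m>M$ part of the sum is then $\le C\eta$ uniformly in $n$. For the finite head $1\le m\le M$: here $n-m\to\infty$ as $n\to\infty$, so by Theorem \ref{Thm:conv-of-density} and $w(n)/w(n-m)\to1$ each of the finitely many terms tends to $0$ in $\cB$; hence the head contributes $o(1)$. This gives $\limsup_n(\text{sum})\le C\eta$, and $\eta$ arbitrary finishes it. Note the condition $\|v_m\|_\cB=o(m^{-1})$ is not needed for this lemma — it will be used elsewhere (to control the ``middle range'' $\delta n < m$ terms in the proof of Theorem \ref{Thm:extend-observable}); only summability $\sum_m\|v_m\|_\cB<\infty$ is invoked here.

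The main obstacle is purely bookkeeping: making the bound $w(n)\|T_{n-m,g}\|_{\mathscr{L}(\cB)}\le C$ genuinely uniform over the full range $1\le m\le\delta n$ (equivalently over $N\ge(1-\delta)n$ with $n\to\infty$), which is where one needs the uniform control from Theorem \ref{Thm:conv-of-density}'s proof rather than just its statement, together with the uniform asymptotics $w(n)\sim w(n-m)$ from regular variation. Once that uniform bound is in hand, the split-at-$M$ / dominated-convergence argument is routine.
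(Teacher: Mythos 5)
Your proposal is correct and follows essentially the same route as the paper: bound each summand by $\|w(n)T_{n-m,g}-\bE[g((U_j)_j,W)]P\|_{\mathscr{L}(\cB)}\,\|L^m u_{j,m}\|_{\cB}$, get termwise convergence for fixed $m$ from Theorem \ref{Thm:conv-of-density} plus $w(n)/w(n-m)\to1$, get a uniform bound over $1\le m\le\delta n$ from the uniform convergence theorem for regularly varying functions, and conclude by summability of $\|L^m u_{j,m}\|_{\cB}$ (your explicit split at a threshold $M$ is just the dominated convergence argument the paper invokes, and you correctly note the $o(m^{-1})$ hypothesis is not needed here). One small remark: the uniform bound $\sup_N w(N)\|T_{N,g}\|_{\mathscr{L}(\cB)}<\infty$ already follows from the statement of Theorem \ref{Thm:conv-of-density} (a norm-convergent sequence is bounded), so you do not need to re-enter its proof as you suggest.
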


\begin{proof}
Since $\int_X u_{j,m}\:\rd\mu=\int_Y L^m u_{j,m}\:\rd\mu$,
\begin{equation}\label{Lem:extend-2-1}
\begin{split}
    &\sum_{1\leq m\leq \delta n}
	\bigg\|w(n)T_{n-m,g}(L^m u_{j,m})
	-
	\bE[g(U_1,\dots,U_d, W)]\int_{X} u_{j,m}\:\rd\mu\bigg\|_{\cB}
	\\
	&\leq
	\sum_{1\leq m\leq \delta n}
	\|w(n)T_{n-m,g}
	-
	\bE[g(U_1,\dots,U_d, W)]P
	\|_{\mathscr{L}(\cB)}
	\|L^m u_{j,m}\|_{\cB}.
\end{split}	
\end{equation}
For each fixed $m$,
\begin{equation}
	\lim_{n\to\infty}
    \frac{w(n)}{w(n-m)}
	=1,
\end{equation}
and hence, by Theorem \ref{Thm:conv-of-density},
\begin{equation}
\begin{split}
	&\lim_{n\to\infty}\|w(n)T_{n-m,g}
	-
	\bE[g(U_1,\dots,U_d, W)]P\|_{\mathscr{L}(\cB)}
	\\
	&
	=\lim_{n\to\infty}\|w(n-m)T_{n-m,g}
	-
	\bE[g(U_1,\dots,U_d, W)]P\|_{\mathscr{L}(\cB)}
	=0.
\end{split}
\end{equation}
By the uniform convergence theorem for regular varying functions,
\begin{equation}
 C=\sup_{\substack{n,m \\ 1\leq m \leq \delta n}}\frac{w(n)}{w(n-m)}
 <\infty,	
\end{equation}
and hence
\begin{equation}
\begin{split} 
   &\sup_{\substack{n,m \\ 1\leq m \leq \delta n}}\| w(n)T_{n-m,g}
	-
	\bE[g(U_1,\dots,U_d, W)]P\|_{\mathscr{L}(B)}
   \\
   &\leq
   C\sup_{\substack{n,m \\ 1\leq m \leq \delta n}}\|w(n-m)T_{n-m,g}\|_{\mathscr{L}(B)}+\sup|g|<\infty.
\end{split}		
\end{equation}
Since $\sum_{m=1}^\infty\|L^m u_{j,m}\|<\infty$, we can apply the dominated convergence theorem to the right-hand side of \eqref{Lem:extend-2-1}, and then we obtain the desired result.
\end{proof}

\begin{Lem}\label{Lem:extend-3}
For any $\delta\in(0,1)$ and $j=1,\dots,d$,
\begin{equation}
   \lim_{n\to\infty}\bigg(w(n)\sum_{\delta n< m \leq n}
   \bigg\|
	1_Y L^n
    \bigg[
    g
    \bigg(\frac{S_{A_1}(n)}{n},\dots, \frac{S_{A_d}(n)}{n},  \frac{S_Y(n)}{b_n}
    \bigg) 
    u_{j,m} 
    \bigg]
    \bigg\|_{\cB}\bigg)
    =0.
\end{equation}
\end{Lem}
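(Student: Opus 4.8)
The plan is to reduce each summand with $\delta n<m\le n$, via the cocycle identity \eqref{Lem:extend-1-1} from the proof of Lemma~\ref{Lem:extend-1}, to a tail segment of the operator series $\sum_{n_1+\dots+n_d+k=N}\|T_{n_1,\dots,n_d}(k)\|_{\mathscr{L}(\cB)}$, and then to play the decay hypothesis $\|L^m u_{j,m}\|_{\cB}=o(m^{-1})$ from \eqref{Thm:extend-observable-1} against a Karamata-type estimate for that series. The input I would record first is that, writing
\[
	\Theta(N):=\sum_{\substack{n_1,\dots,n_d,k\ge0\\ n_1+\dots+n_d+k=N}}\big\|T_{n_1,\dots,n_d}(k)\big\|_{\mathscr{L}(\cB)},
\]
one has $\Theta(0)=\|\mathrm{Id}\|_{\mathscr{L}(\cB)}=1$ and $C_*:=\sup_{N\ge1}w(N)\Theta(N)<\infty$; this follows from Propositions~\ref{Prop:loc-lim} and~\ref{Prop:large-deviation} (and is already implicit, for the specific $g$, in the proofs of Theorem~\ref{Thm:conv-of-density} and Lemma~\ref{Lem:extend-1}). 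In particular $\Theta(N)\le C_* b_N/(\Gamma(\alpha)N)$ for all $N\ge1$.

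For fixed $j$ and integers $n\ge1/\delta$, $\delta n<m\le n$, I would apply \eqref{Lem:extend-1-1} with the shift $m-1$ placed in the $j$-th coordinate: since $L^m u_{j,m}\in\cB$ is supported on $Y$,
\[
	1_Y L^n\Big[g\big((\tfrac{S_{A_i}(n)}{n})_{i=1}^d,\tfrac{S_Y(n)}{b_n}\big)u_{j,m}\Big]
	=\sum_{\substack{n_1,\dots,n_d,k\ge0\\ n_1+\dots+n_d+k=n-m}}
	g\big(\tfrac{n_j+m-1}{n},(\tfrac{n_i}{n})_{i\ne j},\tfrac{k+1}{b_n}\big)\,T_{n_1,\dots,n_d}(k)\big(L^m u_{j,m}\big).
\]
Since $0\le n_j+m-1\le n-1<n$, the arguments of $g$ stay in $[0,1]^d\times[0,\infty)$, and the boundary case $m=n$ is harmless because then $\Theta(0)=1$. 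Taking $\cB$-norms and using $|g|\le\sup|g|=:M$ gives
\[
	\big\|1_Y L^n[g(\cdots)u_{j,m}]\big\|_{\cB}\le M\,\Theta(n-m)\,\|L^m u_{j,m}\|_{\cB}.
\]

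To finish, set $\eta_n:=\sup_{m>\delta n}m\,\|L^m u_{j,m}\|_{\cB}$, so $\eta_n\to0$ by the hypothesis $\|L^m u_{j,m}\|_{\cB}=o(m^{-1})$, whence $\|L^m u_{j,m}\|_{\cB}\le\eta_n/(\delta n)$ for $m>\delta n$. Summing over $\delta n<m\le n$ (equivalently over $0\le N=n-m<(1-\delta)n$),
\[
	w(n)\sum_{\delta n<m\le n}\big\|1_Y L^n[g(\cdots)u_{j,m}]\big\|_{\cB}
	\le\frac{M\eta_n}{\delta n}\,w(n)\Big(1+\frac{C_*}{\Gamma(\alpha)}\sum_{N=1}^{\lceil(1-\delta)n\rceil}\frac{b_N}{N}\Big).
\]
As $b_N/N$ is regularly varying of index $\alpha-1\in(-1,0)$, Karamata's theorem (see \cite[\S1.5]{BGT}) gives $\sum_{N=1}^{M}b_N/N\sim\alpha^{-1}b_M$, so the bracket is $O(b_{(1-\delta)n})=O(b_n)$; since $w(n)b_n/n=\Gamma(\alpha)$, the right-hand side is $O(\eta_n)\to0$, which is the assertion.

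The one genuinely delicate point is this final balance: it is precisely the strengthened assumption $\|L^m u_{j,m}\|_{\cB}=o(m^{-1})$, not mere summability, that supplies the extra vanishing factor $\eta_n$ offsetting the exact growth $w(n)\,\Gamma(\alpha)^{-1}\sum_{N<(1-\delta)n}b_N/N=O(w(n)b_n/n)=O(1)$. Verifying the uniformity in $k$ of \eqref{Lem:extend-1-1} and the elementary value $\Theta(0)=1$ is routine.
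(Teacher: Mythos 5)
Your proposal is correct and follows essentially the same route as the paper: the cocycle identity \eqref{Lem:extend-1-1} reduces each summand to $\sup|g|\cdot\Theta(n-m)\,\|L^m u_{j,m}\|_{\cB}$, the hypothesis $\|L^m u_{j,m}\|_{\cB}=o(m^{-1})$ is pulled out as a vanishing supremum against the factor $1/(\delta n)$, and the series $\sum_{N\le n}\Theta(N)$ is controlled by regular variation (your Karamata bound $\sum_{N\le M}b_N/N\sim\alpha^{-1}b_M$ is exactly the paper's $\sum_{m\le n}\Theta(m)\sim n/(\alpha w(n))$). The only cosmetic difference is that you phrase the key input as $\sup_N w(N)\Theta(N)<\infty$ where the paper asserts $\Theta(N)\sim 1/w(N)$; both are justified the same way from Propositions \ref{Prop:loc-lim} and \ref{Prop:large-deviation}.
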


\begin{proof}
By almost the same calculation as in \eqref{Lem:extend-1-1},
\begin{equation}\label{Lem:extend-3-1}
\begin{split}
	&w(n)\sum_{\delta n< m\leq n}
   \bigg\|
	1_Y L^n
    \bigg[
    g
    \bigg(\frac{S_{A_1}(n)}{n},\dots, \frac{S_{A_d}(n)}{n},  \frac{S_Y(n)}{b_n}
    \bigg) 
    u_{j,m} 
    \bigg]
    \bigg\|_{\cB}
    \\
    &\leq
    (\sup|g|)
     w(n)
    \sum_{\delta n< m\leq n}
    \sum_{\substack{n_1,\dots,n_d,k \\ n_1+\dots+n_d+k=n-m}}\|T_{n_1,\dots,n_d}(k)\|_{\mathscr{L}(\cB)}
    \|L^m u_{j,m}\|_{\cB}
    \\
    &\leq
    (\sup|g|)
     w(n)
    \bigg(\sum_{\delta n < m\leq n}\sum_{\substack{n_1,\dots,n_d,k \\ n_1+\dots+n_d+k=n-m}}\|T_{n_1,\dots,n_d}(k)\|_{\mathscr{L}(\cB)}\bigg)
     \frac{1}{\delta n}
     \sup_{m > \delta n} \|m L^m u_{j,m}\|_{\cB}.
\end{split}
\end{equation}
By a similar way as in the proof of Theorem \ref{Thm:conv-of-density}, we see that
\begin{equation}
	\sum_{\substack{n_1,\dots,n_d,k \\ n_1+\dots+n_d+k=n}}\|T_{n_1,\dots,n_d}(k)\|_{\mathscr{L}(\cB)}
	\sim
	\frac{1}{w(n)},
	\quad\text{as $n\to\infty$.}
\end{equation}
Since $1/w(n)$ is regularly varying with index $-1+\alpha$, we have
\begin{equation}
\begin{split}
	\sum_{\delta n < m\leq n}\sum_{\substack{n_1,\dots,n_d,k \\ n_1+\dots+n_d+k=n-m}}\|T_{n_1,\dots,n_d}(k)\|_{\mathscr{L}(\cB)}
	&\leq
	\sum_{m=0}^n\sum_{\substack{n_1,\dots,n_d,k \\ n_1+\dots+n_d+k=m}}\|T_{n_1,\dots,n_d}(k)\|_{\mathscr{L}(\cB)}
	\\
	&\sim
	\frac{n}{\alpha w(n)},
	\quad
	\text{as $n\to\infty$.} 
\end{split}
\end{equation}
Therefore there exists a constant $C>0$ such that
\begin{equation}
\begin{split}
	\text{(R.H.S.\ of \eqref{Lem:extend-3-1})}
	\leq
	C \sup_{m > \delta n} \|m L^m u_{j,m}\|_{\cB}
	\to
	0,
	\quad\text{as $n\to\infty$.}
\end{split}
\end{equation}
Here we used $ \|L^m u_{j,m}\|_{\cB}=o(m^{-1})$, as $m\to\infty$. We now complete the proof.
\end{proof}

We now prove Theorem \ref{Thm:extend-observable} by using Lemmas \ref{Lem:extend-1}, \ref{Lem:extend-2} and \ref{Lem:extend-3}.

\begin{proof}[Proof of Theorem \ref{Thm:extend-observable}]
By Lemmas  \ref{Lem:extend-1} and \ref{Lem:extend-2},
\begin{equation}
\begin{split}
 \lim_{\delta\to 0+}\limsup_{n\to\infty}
 \bigg\|
 w(n)1_YL^n
   \bigg[g\bigg(\frac{S_{A_1}(n)}{n},\dots, \frac{S_{A_d}(n)}{n},  \frac{S_Y(n)}{b_n}\bigg) \sum_{\substack{j,m \\ m \leq \delta n}}u_{j,m} &\bigg]
 \\
 -\bE[g(U_1,\dots,U_d, W)]
 \int_{X}
 \sum_{\substack{j,m \\ m \leq \delta n}}
 u_{j,m}\:\rd\mu
   &\bigg\|_{\cB}=0.		
\end{split}
\end{equation}
By Lemma \ref{Lem:extend-3}, for any $0<\delta<1$,
\begin{equation}
   \lim_{n\to\infty}
   \bigg\|
   w(n)
	1_Y L^n
    \bigg[
    g
    \bigg(\frac{S_{A_1}(n)}{n},\dots, \frac{S_{A_d}(n)}{n},  \frac{S_Y(n)}{b_n}
    \bigg) 
     \sum_{\substack{j,m \\ m > \delta n}}
    u_{j,m} 
    \bigg]
    \bigg\|_{\cB}
    =0.
\end{equation}
In addition, since $u=\sum_{j,m}u_{j,m}\in L^1(X)$,
\begin{equation}
   \lim_{n\to\infty}\int_X \sum_{\substack{j,m \\ m > \delta n}}u_{j,m}\: \rd\mu=0.	
\end{equation}
Combining these facts, we obtain the desired result.	
\end{proof}

\section{Application to intermittent interval maps}
\label{sec:intermittent}

In this section we apply Theorems \ref{Thm:conv-of-density} and \ref{Thm:extend-observable} to a certain class of intermittent interval maps. For simplicity, we only deal with intermittent interval maps which have finite number of branches and an indifferent fixed point on each branch.

\begin{Asmp}[Thaler's map \cite{Tha80, Tha83}]\label{asmp:Thaler}
For an integer $d\geq2$, let $(J_j:j=1,\dots,d)$ be a finite collection of disjoint subintervals of $[0,1]$ such that $\sup J_j=\inf J_{j+1}$, $j=1,\dots,d-1$, and $\bigcup_{j=1}^d \bar{J_j}=[0,1]$.
Suppose that an interval map $f:[0,1]\to[0,1]$ satisfies the following:
\begin{enumerate}
\item For each $j$, the restriction $f|_{J_j}$ can be extended to a $C^2$-bijection $f_j:\bar{J_j}\to[0,1]$.
\item For each $j$, there exists an indifferent fixed point $x_j\in J_j$, that is, $f(x_j)=x_j$ and $f'(x_j)=1$. In addition there exists $\eta>0$ such that $f'$ is decreasing on $(x_j-\eta,x_j)$ and increasing on $(x_j,x_j+\eta)$. 
\item For any $\varepsilon>0$, it holds that $\inf\{f'(x):x\in\bigcup_{j}J_j\setminus (x_j-\varepsilon, x_j+\varepsilon)\}>1$. 
\item There exist $\alpha\in(0,1)$, $c_j>0$ and a measurable function $\ell^*:(0,\infty)\to(0,\infty)$ slowly varying at $0$ such that\begin{align}
|f(x)-x|\sim c_j|x-x_j|^{1+1/\alpha}\ell^*(|x-x_j|), \quad\text{as $x\to x_j$ $(j=1,\dots,d)$.}	
\end{align}	
\end{enumerate}
\end{Asmp}

In this section we always suppose Assumption \ref{asmp:Thaler} is satisfied. 
Note that $x_1=0$ and $x_d=1$.
The map $f$ admits an invariant density $h(x)$ which is continuous and positive on $[0,1]\setminus\{x_1,\dots,x_d\}$. For some continuous function $h_0:[0,1]\to(0,\infty)$ and some constant $c'_1,\dots,c'_d>0$, the invariant density $h(x)$ can be written as 
\begin{equation}
\begin{split}
	h(x)
	&=
	h_0(x)
	\prod_{j=1,\dots,d}
	\frac{x-x_j}{x-f_j^{-1}(x)},
	\quad
	x\in[0,1]\setminus\{x_1,\dots, x_d\},
	\\
	&\sim
	\frac{c_j'}
	{|x-x_j|^{1/\alpha}\ell^*(|x-x_j|)},
	\quad
	\text{as $x\to x_j$ $(j=1,\dots,d)$.}	
\end{split}
\end{equation}
We denote by $\mu(\rd x)=h(x)\rd x$ an absolutely continuous $f$-invariant measure on $[0,1]$, which has infinite measure on any neighborhood of $x_j$. The map $f$ is  a CEMPT with respect to $\mu$.

We denote by $\operatorname{Lip}([0,1])$ the space of  functions $v:[0,1]\to\bC$ which is Lipschitz continuous with respect to the Euclidean norm.

\begin{Thm}\label{Thm:intermittent}
Under Assumption \ref{asmp:Thaler}, there exist disjoint intervals $A_1,\dots,A_d\subset[0,1]$, their complement $Y=[0,1]\setminus(A_1\cup\dots\cup A_d)$, and a Banach space $\cB\subset L^1(\mu)$ such that the following conditions are satisfied.	
\begin{enumerate}
\item $x_j\in A_j\subset J_j$ for $j=1,\dots,d$, and $A_j$ and $J_j\setminus A_j$ have positive Lebesgue measure.
\item If we normalize $\mu$ so that $\mu[Y]=1$, then Assumptions \ref{asmp:dyn-sep}, \ref{asmp:reg-var} and \ref{asmp:ape-ren} are satisfied and hence Theorems \ref{Thm:conv-of-density} and \ref{Thm:extend-observable} can be applied. 

\item Let $v\in \operatorname{Lip}([0,1])$ and $u=v/h$, then $u\in \cB([0,1])$, where $\cB([0,1])$ is the function space given by \eqref{B(X)} with $X=[0,1]$. Moreover, the condition \eqref{Thm:extend-observable-1} is satisfied.

\item Let $v$ be a Riemann integrable function on $[0,1]$ and $u=v/h$. Then, for any bounded continuous function $g:[0,1]^d\times [0,\infty)\to \bR$,
\begin{equation}
\begin{split}\label{Thm:intermittent-1}
&\frac{\Gamma(\alpha)n}
   {b_n} 1_Y L^n
    \bigg[g\bigg(\bigg(\frac{S_{A_j}(n)}{n}\bigg)_{j=1}^d, \frac{S_Y(n)}{b_n}\bigg) u \bigg]
    \\
    &\to
    \bE\Big[g\big((U_j)_{j=1}^d, W\big)\Big]\int_X u \:\rd \mu,	
    \quad
    \text{in $L^\infty(Y)$, as $n\to\infty$.}
\end{split}
\end{equation}
\end{enumerate}
\end{Thm}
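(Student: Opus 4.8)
The plan is to build the reference set $Y$, the traps $A_1,\dots,A_d$ and the Banach space $\cB$ by hand, to verify Assumptions \ref{asmp:dyn-sep}--\ref{asmp:ape-ren} using Thaler's theory of maps with indifferent fixed points, and then to read off (3) and (4) from Theorems \ref{Thm:conv-of-density} and \ref{Thm:extend-observable} together with a squeezing argument. Concretely, for each $j$ I would take $A_j$ to be a small neighborhood of $x_j$ inside $J_j$ (one-sided when $x_j\in\{0,1\}$, two-sided otherwise) with both $A_j$ and $J_j\setminus A_j$ of positive length, and $Y=[0,1]\setminus\bigcup_j A_j$. Assumption \ref{asmp:Thaler}(2),(3) force $f'>1$ on a punctured neighborhood of each $x_j$, so each $x_j$ is weakly repelling and $f(A_j)\supseteq A_j$; moreover $f_i^{-1}(x_j)\neq x_i$ for $i\neq j$ (otherwise $x_i=f(x_i)=x_j$) and, more generally, no $f_i^{-1}(x_j)$ meets any $x_k$. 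Hence, shrinking the $A_j$ if necessary, $f(A_j)\subseteq A_j\cup Y$ and $A_j\cap f^{-1}(A_i)=\emptyset$ for $i\neq j$, which is Assumption \ref{asmp:dyn-sep}(2); since $h$ blows up like $|x-x_j|^{-1/\alpha}$ near $x_j$, each $A_j$ has infinite $\mu$-measure while $Y$ (compactly away from all $x_j$) has finite measure, and after rescaling $\mu$ we obtain $\mu[Y]=1$ and Assumption \ref{asmp:dyn-sep}(1). By Thaler's analysis \cite{Tha80,Tha83} (see also \cite{ThZw,Zwe07cpt}) the first-return map $F=f^{\varphi}\colon Y\to Y$ is uniformly expanding with bounded distortion and Markov for the partition $\{S_{A_j}(\varphi)=n\}$; for $\cB$ I would take the usual space of bounded-variation functions on $Y$ (or the adapted piecewise-Lipschitz space), which contains the constants and satisfies $\|\cdot\|_{L^\infty(Y)}\le\|\cdot\|_\cB$.

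Next I would verify Assumptions \ref{asmp:reg-var} and \ref{asmp:ape-ren}. The regular variation follows from the classical escape-time estimate near an indifferent fixed point: a point entering $A_j$ at distance $\delta$ from $x_j$ leaves after $\sim\mathrm{const}\cdot\delta^{-1/\alpha}/\ell^*(\delta)$ iterations, so $\{\varphi>n\}$ is essentially the set entering within distance $\asymp n^{-\alpha}$ of some $x_j$; integrating $h$ over these entry sets yields $\sum_{k\ge n}r_{j,k}\sim\beta_j n^{-\alpha}\ell(n)$ with weights $\beta_j$ depending explicitly on $\alpha,c_j,c_j',\ell^*$, while the monotonicity of $f'$ near $x_j$ gives the one-step bound $r_{j,n}=O(n^{-\alpha-1}\ell(n))$; this is the step where $\alpha\in(0,1)$ enters, and I would cite \cite{Tha80,Tha83,ThZw,Zwe07cpt}. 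For Assumption \ref{asmp:ape-ren}: the spectral gap of $R$ on $\cB$ is the Lasota--Yorke / Ionescu-Tulcea--Marinescu inequality for the uniformly expanding map $F$ \cite{AaDe,Tha00,Gou11}; the bound $\|R_{j,n}\|_{\mathscr{L}(\cB)}\le Cr_{j,n}$ follows from bounded distortion, because $\{S_{A_j}(\varphi)=n\}$ is a finite union of full-branch $F$-cylinders of total measure $\asymp r_{j,n}$ on each of which $R(1_Z\,\cdot\,)$ acts through a single bounded-distortion inverse branch; and the aperiodicity — spectral radius of $R(z)<1$ for $z$ on the $d$-torus with $z\neq(1,\dots,1)$ — follows because $(S_{A_1}(\varphi),\dots,S_{A_d}(\varphi))$ assumes an arithmetically non-degenerate range of values (each coordinate runs over infinitely many consecutive integers), so the standard coboundary argument of \cite{AaDe,Gou11,MelTer} rules out unimodular eigenvalues of $R(z)$. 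With Assumptions \ref{asmp:dyn-sep}--\ref{asmp:ape-ren} in force, Theorems \ref{Thm:conv-of-density} and \ref{Thm:extend-observable} apply, which is (2).

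For (3) I would use the identity $L^m(w/h)=\hat L^m(w)/h$, where $\hat L$ is the transfer operator of $f$ with respect to Lebesgue measure, so that $L^m u_{j,m}=\hat L^m(v\,1_{X_{j,m}})/h$. Since $f^m$ restricted to $X_{j,m}$ is a single branch mapping onto a subinterval of $Y$ whose length is bounded below, bounded distortion gives $\|\hat L^m(v\,1_{X_{j,m}})\|_\cB\le C(\|v\|_\infty+\operatorname{Lip}(v))\,|X_{j,m}|$ with $|X_{j,m}|=O(m^{-\alpha-1}\ell(m))$, and dividing by $h$ (which is $C^1$ and bounded above and below on $Y$) preserves the estimate; also $u=v/h\in L^1\cap L^\infty$ because $\int|u|\,d\mu=\int|v|\,dx<\infty$ and $|v|/h$ is bounded (it vanishes at the $x_j$). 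Hence $u\in\cB([0,1])$, and since $\alpha>0$ we get $\sum_{j,m}\|L^m u_{j,m}\|_\cB<\infty$ and $\|L^m u_{j,m}\|_\cB=O(m^{-\alpha-1}\ell(m))=o(m^{-1})$, i.e.\ \eqref{Thm:extend-observable-1}. For (4), given a Riemann integrable $v$ and $\varepsilon>0$, I would sandwich $v^-\le v\le v^+$ between Lipschitz functions with $\int(v^+-v^-)\,dx<\varepsilon$ (possible since $v$ is bounded and a.e.\ continuous), put $u^\pm=v^\pm/h$, and split $g=g^+-g^-$ into its bounded continuous nonnegative parts; positivity of $L^n$ gives
\[
1_Y L^n\big[g^+(\cdots)\,u^-\big]\ \le\ 1_Y L^n\big[g^+(\cdots)\,u\big]\ \le\ 1_Y L^n\big[g^+(\cdots)\,u^+\big]
\]
pointwise on $Y$, where $g^+(\cdots)$ abbreviates $g^+\big((S_{A_j}(n)/n)_{j=1}^d,\,S_Y(n)/b_n\big)$; multiplying by $\Gamma(\alpha)n/b_n$, letting $n\to\infty$, and invoking Theorem \ref{Thm:extend-observable} for $u^\pm$ (valid by (3)) together with $\|\cdot\|_{L^\infty(Y)}\le\|\cdot\|_\cB$ squeezes the middle term within $\bE[g^+(U,W)]\varepsilon$ of $\bE[g^+(U,W)]\int u\,d\mu$ in $L^\infty(Y)$; letting $\varepsilon\to0$ and subtracting the analogous statement for $g^-$ yields \eqref{Thm:intermittent-1}.

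The hard part will be the dynamical input behind Assumption \ref{asmp:ape-ren}: constructing $\cB$ and proving that $F$ is uniformly expanding with distortion control strong enough to deliver simultaneously the spectral gap, the operator estimate $\|R_{j,n}\|_{\mathscr{L}(\cB)}\le Cr_{j,n}$, and the multi-dimensional aperiodicity, together with the precise regularly varying asymptotics and the identification of the weights $\beta_j$. All of this is essentially contained in the existing theory of AFN / Gibbs--Markov maps, so the main work of this section is to match those results to the abstract hypotheses; the only genuinely new ingredient is the Riemann-integrability squeeze used in (4).
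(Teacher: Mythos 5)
Your overall strategy is the paper's: verify Assumptions \ref{asmp:dyn-sep}--\ref{asmp:ape-ren} for a dynamically defined partition and a Gibbs--Markov-type Banach space, quote Theorems \ref{Thm:conv-of-density} and \ref{Thm:extend-observable}, and obtain (4) by squeezing a Riemann integrable density between Lipschitz ones using positivity of $L$ (your part (3) argument and the sandwich in (4) are essentially what the paper does, the latter modulo citing \cite{MelTer}). The genuine gap is in the very first step, the choice of $A_1,\dots,A_d$. Taking $A_j$ to be an \emph{arbitrary} small neighborhood of $x_j$, ``shrinking if necessary,'' does secure Assumption \ref{asmp:dyn-sep}, but it does \emph{not} make the first-return map $F=f^{\varphi}$ Markov for the partition $\{S_{A_j}(\varphi)=n\}$: the image under $F$ of a cylinder is an interval whose endpoints are $f$-images of the endpoints of $Y$, and for generic endpoints this interval is not a union of partition elements. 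Your entire verification of Assumption \ref{asmp:ape-ren} (the symbolic Lipschitz/BV space, the bound $\|R_{j,n}\|_{\mathscr{L}(\cB)}\le Cr_{j,n}$ via single bounded-distortion inverse branches, and above all the aperiodicity argument, which invokes the rigidity results of \cite{AaDe} for Gibbs--Markov maps) presupposes exactly this Markov structure, so the argument as written fails at that point. The fix requires a dynamically adapted choice of boundary points: for $d=2$ one takes $Y=[\gamma,f(\gamma)]$ with $\gamma$ a $2$-periodic point, so that $F$ has full branches onto $Y$; for $d\ge 3$ one takes $A_j=J_j\cap f^{-1}(J_j)$, which yields a (non-full-branch) Markov map with finitely many images $Y_{j,\pm}$, and the aperiodicity lemma must then be run on the cylinders $P_{j,k,\sigma,n}$ with $g$ constant only on each $Y_{j,\sigma}$ rather than globally. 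This last subtlety also affects your aperiodicity sketch, which implicitly assumes the eigenfunction is globally constant.

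A second, smaller imprecision: Assumption \ref{asmp:reg-var} demands a \emph{single} slowly varying $\ell$ with $\sum_{k\ge n}r_{j,k}\sim\beta_j n^{-\alpha}\ell(n)$ for all $j$ simultaneously, together with $\sum_j\beta_j=1$. Your escape-time heuristic gives each tail separately; to get a common $\ell$ and the normalized weights one has to pass through the precise asymptotics $r_{j,n}\sim c_j''(f_j^{-n}(\cdot)-f_j^{-n-1}(\cdot))$ and the inversion result of \cite[Remark 1]{Zwe03}, which is where the hypothesis that all branches share the same $\alpha$ and the same $\ell^*$ is actually used. This is a matter of citation rather than a conceptual error, but it should be made explicit.
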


Theorem \ref{Thm:intermittent} yields the following corollary immediately.

\begin{Cor}
Let $\nu(\rd x)$ be a probability measure on $[0,1]$ which admits a Riemann integrable density $\nu(\rd x)/\rd x=v$ with respect to the Lebesgue measure $\rd x$ on $[0,1]$. (Hence $\nu(\rd x)/\mu(\rd x)=v/h$.) Then, for any Borel subset $A\subset Y$ with positive Lebesgue measure,
\begin{equation}
	\nu[f^{-n}(A)]\sim \frac{b_n}{\Gamma(\alpha)n}\mu(A)\sim
	\frac{\sin(\pi\alpha)}{\pi}\frac{1}{n^{1-\alpha}\ell(n)}\mu(A),
	 \quad \text{as $n\to\infty$.}
\end{equation}
In addition, for any $t_1,\dots,t_d\in[0,1]$ and $t_{d+1}\geq0$, 
\begin{equation}
\begin{split}
    &\nu\bigg[\frac{S_{A_1}(n)}{n}\leq t_1,\dots,  \frac{S_{A_d}(n)}{n}\leq t_d,\; 
    \frac{S_{Y}(n)}{b_n}\leq t_{d+1}
    \:\bigg|\:f^{-n}(A)\bigg]
    \\[5pt]
    &\to
    \bP[U_1\leq t_1,\dots,U_d\leq t_d,\; W\leq t_{d+1}],	
    \quad \text{as $n\to\infty$.}
\end{split}	
\end{equation}

\end{Cor}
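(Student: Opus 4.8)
For each $j=1,\dots,d$ take $A_j$ to be an interval around the indifferent fixed point $x_j$ inside $J_j$ of a common small radius $\delta$ (one-sided when $x_j\in\{0,1\}$, two-sided otherwise), and set $Y=[0,1]\setminus\bigcup_{j}A_j$. Since $f'(x_j)=1$ and each $x_j$ is repelling with $|f(x)-x|\asymp|x-x_j|^{1+1/\alpha}$, each $f_j$ is increasing on $J_j$ and pushes $A_j$ strictly outward, whence $f_j^{-1}(A_j)\cap J_j\subset A_j$; and for $i\ne j$ the interval $f_i^{-1}(A_j)$ shrinks, as $\delta\to0$, to the single point $f_i^{-1}(x_j)\ne x_i$, so for $\delta$ small enough $f_i^{-1}(A_j)\subset Y$. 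This gives Assumption~\ref{asmp:dyn-sep}(2); (1) holds because $h$ has a non-integrable singularity at $x_j\in A_j$ (so $\mu[A_j]=\infty$) while $h$ is continuous and bounded on the compact set $Y$, after rescaling $h$ so that $\mu[Y]=1$; taking $\delta$ small also makes $A_j$ and $J_j\setminus A_j$ of positive Lebesgue measure, which is (1) of the theorem. For $\cB$ I take the space $\mathrm{BV}(Y)$ of functions of bounded variation on $Y$, with norm $\mathrm{Var}(\cdot)+\|\cdot\|_{L^\infty(Y)}$ (if needed for the distortion bookkeeping one may instead use functions H\"older along the countable partition of $Y$ into branches of the first-return map $F$): it contains the constants, satisfies $\|u\|_{L^\infty(Y)}\le\|u\|_{\cB}$, and lies in $L^1(\mu)$ because $\mu[Y]=1<\infty$.

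\textbf{Assumption~\ref{asmp:reg-var}.} By the dynamical separation the orbit, once in some $A_j$, stays in $A_j$ until its first return to $Y$, so $S_{A_j}(\varphi)=\varphi-1$ on $Y\cap f^{-1}(A_j)$ and $r_{j,n}=\mu[Y\cap f^{-1}(A_j)\cap\{\varphi=n+1\}]$, which after one backward step is comparable to the $\mu$-mass of the set of points of $A_j$ escaping $A_j$ in exactly $n$ further steps. For an indifferent fixed point of exponent $1+1/\alpha$ that $n$-th escape cylinder is an interval, at distance from $x_j$ regularly varying (in $n$) of index $-\alpha$ and of Lebesgue length regularly varying of index $-\alpha-1$, and the escape map on it has uniformly bounded distortion; combining this with the asymptotics of $h$ near $x_j$ -- the classical Thaler wandering-rate estimate, cf.\ \cite{Tha80,Tha83,Tha00,ThZw,Zwe07cpt,SerYan19,Ser20} -- gives $r_{j,n}=O(n^{-\alpha-1}\ell(n))$ and $\sum_{k\ge n}r_{j,k}\sim\beta_j n^{-\alpha}\ell(n)$ for a suitable slowly varying $\ell$, where $\beta_j:=\lim_n\mu[Y\cap\{S_{A_j}(\varphi)>n\}]/\mu[Y\cap\{\varphi>n\}]\in(0,1)$ exists and $\sum_j\beta_j=1$ because $\mu[Y\cap\{\varphi>n\}]=\sum_j\mu[Y\cap\{S_{A_j}(\varphi)>n\}]$.

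\textbf{Assumption~\ref{asmp:ape-ren}.} The first-return map $F:Y\to Y$ is a piecewise-$C^2$, uniformly expanding map with bounded distortion (a Rychlik map, with the far-out branches very expanding), so with $\cB=\mathrm{BV}(Y)$ the operator $R$ obeys a Lasota--Yorke inequality, and by standard one-dimensional transfer-operator theory together with the exactness of $f$ (hence of $F$), $R$ has the spectral gap of (\ref{asmp:ape-ren-spec}). For (\ref{asmp:ape-ren-bdd}), decomposing $\{S_{A_j}(\varphi)=n\}\cap Y$ into its $F$-branches and invoking the uniform distortion bound along the slow branches gives $\|R_{j,n}\|_{\mathscr{L}(\cB)}\le C\,\mu[\{S_{A_j}(\varphi)=n\}\cap Y]=C\,r_{j,n}$. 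For aperiodicity (\ref{asmp:ape-ren-ape}), note that $(S_{A_1}(\varphi),\dots,S_{A_d}(\varphi))$ takes only the values $0$ and $n\,e_j$ with $r_{j,n}>0$; since $f_i^{-1}(A_j)\subset Y$ for $i\ne j$ (so $Y\cap f^{-1}(A_j)$ has positive measure) and the escape time from $A_j$ realises every large enough value, $\{n:r_{j,n}>0\}$ contains two consecutive integers for each $j$. Hence if $|z_1|=\dots=|z_d|=1$ and $R(z)$ had spectral radius $1$, the associated eigenfunction (coboundary) equation would force $z_j^{n}=z_j^{n+1}$, i.e.\ $z_j=1$, for every $j$, so $z=(1,\dots,1)$; this is (\ref{asmp:ape-ren-ape}). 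With Assumptions~\ref{asmp:dyn-sep}--\ref{asmp:ape-ren} verified, Theorems~\ref{Thm:conv-of-density} and \ref{Thm:extend-observable} apply, which is (2).

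\textbf{Parts (3) and (4); main obstacle.} Let $v\in\operatorname{Lip}([0,1])$, $u=v/h$. Then $u\in L^1(\mu)\cap L^\infty(\mu)$ since $1/h\to0$ as $x\to x_j$ and $\int|v/h|\,d\mu=\int_0^1|v|\,dx<\infty$, and $u1_Y\in\cB$ (as $h$ is piecewise $C^1$, positive and bounded on $Y$). The set $X_{j,m}$ is exactly the $m$-th escape cylinder of $A_j$ (an interval, or a pair of them), on which $f^m$ is a $C^2$ bijection with uniformly bounded distortion onto the \emph{fixed} set $f_j(A_j)\cap Y$; since $L$ is the transfer operator for the invariant measure $\mu$, on $f_j(A_j)\cap Y$ one has $|L^m u_{j,m}|\le C/|(f^m)'|$, and with the escape-cylinder estimates above this yields $\|L^m u_{j,m}\|_{\cB}=O(m^{-\alpha-1}\ell_0(m))$ for a slowly varying $\ell_0$; in particular it is summable and $o(m^{-1})$ because $\alpha>0$, so $u\in\cB([0,1])$ and \eqref{Thm:extend-observable-1} holds, whence (3) -- and (4) for Lipschitz $v$, since $\|\cdot\|_{L^\infty(Y)}\le\|\cdot\|_{\cB}$ -- follows from Theorem~\ref{Thm:extend-observable}. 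For a general Riemann-integrable $v$, pick Lipschitz $v^-_\varepsilon\le v\le v^+_\varepsilon$ with $\int_0^1(v^+_\varepsilon-v^-_\varepsilon)\,dx<\varepsilon$; writing $G_n:=g\big((S_{A_i}(n)/n)_{i=1}^d,\,S_Y(n)/b_n\big)$, for $g\ge0$ the positivity of $L$ gives $\tfrac{\Gamma(\alpha)n}{b_n}1_YL^n[G_n(v^-_\varepsilon/h)]\le\tfrac{\Gamma(\alpha)n}{b_n}1_YL^n[G_n(v/h)]\le\tfrac{\Gamma(\alpha)n}{b_n}1_YL^n[G_n(v^+_\varepsilon/h)]$ pointwise on $Y$; by (3) the outer terms converge in $L^\infty(Y)$ to the constants $\bE[g((U_j)_{j=1}^d,W)]\int(v^-_\varepsilon/h)\,d\mu$ and $\bE[g((U_j)_{j=1}^d,W)]\int(v^+_\varepsilon/h)\,d\mu$, whose difference is $\bE[g((U_j)_{j=1}^d,W)]\int_0^1(v^+_\varepsilon-v^-_\varepsilon)\,dx<\bE[g((U_j)_{j=1}^d,W)]\,\varepsilon$; letting $\varepsilon\to0$, and decomposing a general continuous $g$ into $g^\pm$, yields \eqref{Thm:intermittent-1} in $L^\infty(Y)$. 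The crux of the whole argument is the construction of $\cB$ together with the uniform distortion estimates along the slow branches, which simultaneously control $\|R_{j,n}\|_{\mathscr{L}(\cB)}$, the regular-variation asymptotics of Assumption~\ref{asmp:reg-var}, and the decay of $\|L^m u_{j,m}\|_{\cB}$; the spectral gap and aperiodicity of $R(z)$, though requiring care, follow well-trodden templates (\cite{Tha80,Tha83,Gou11,MelTer,AarSer22}).
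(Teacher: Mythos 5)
Your overall strategy is the same as the paper's: construct $A_1,\dots,A_d$, $Y$ and a Banach space $\cB$ so that Assumptions \ref{asmp:dyn-sep}--\ref{asmp:ape-ren} hold, apply Theorems \ref{Thm:conv-of-density} and \ref{Thm:extend-observable} to $u=v/h$ with $v$ Lipschitz, and then sandwich a Riemann-integrable density between Lipschitz ones using the positivity of $L$; your parts (3) and (4) match the paper's argument essentially verbatim. The differences are in the construction, and one of them creates a genuine gap.

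The gap is in your verification of aperiodicity (Assumption \ref{asmp:ape-ren}\eqref{asmp:ape-ren-ape}). From the coboundary equation $\prod_j z_j^{S_{A_j}(\varphi)}=\lambda_0\, g/(g\circ F)$ you conclude directly that $r_{j,n},r_{j,n+1}>0$ forces $z_j^{n}=z_j^{n+1}$. This does not follow: the two relations $z_j^{n}=\lambda_0\,g(x)/g(F(x))$ and $z_j^{n+1}=\lambda_0\,g(y)/g(F(y))$ live on \emph{different} branches of $F$, so you can only divide them after showing that $g/(g\circ F)$ takes the same value at some $x$ in the $n$-branch and some $y$ in the $(n{+}1)$-branch. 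The paper does this (Lemmas \ref{Lem:TY-ap} and \ref{Lem:F-ap-3}) by invoking the Aaronson--Denker rigidity theorem for mixing Gibbs--Markov maps, which forces $g$ to be constant (resp.\ constant on each $Y_{j,\sigma}$), and only then reads off that the set of differences of $S_{A_j}(\varphi)$ on a fixed union of cylinders is all of $\bZ$. That rigidity theorem needs the Markov "big image" structure of the first-return map, and this is exactly why the paper does \emph{not} take $A_j$ to be an arbitrary small neighbourhood of $x_j$: for $d=2$ the endpoints of $Y$ are chosen on a $2$-periodic orbit so that every branch of $F$ is full, and for $d\ge3$ the choice $A_j=J_j\cap f^{-1}(J_j)$ makes every branch of $F$ map onto one of finitely many sets $Y_{k,\sigma}$. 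With your "radius $\delta$" choice the images $F(P_{j,n})$ are moving intervals with no Markov structure, so neither the Gibbs--Markov spectral theory nor the Aaronson--Denker coboundary rigidity applies as stated. Your fallback to $\mathrm{BV}(Y)$ and Rychlik theory can plausibly recover the spectral gap and the bound $\|R_{j,n}\|_{\mathscr{L}(\cB)}\le Cr_{j,n}$, but you would then need a genuinely different (and unsupplied) argument that a unimodular BV (or merely measurable) solution $g$ of the cohomological equation is constant enough to run the "two consecutive integers" step. A secondary, smaller gap of the same flavour: in Assumption \ref{asmp:reg-var} you assert that the limit $\beta_j=\lim_n\mu[Y\cap\{S_{A_j}(\varphi)>n\}]/\mu[Y\cap\{\varphi>n\}]$ exists; this is precisely what has to be proved (the paper gets it from Zweim\"uller's regular-variation result applied to $f_j^{-1}$, using that all branches share the same exponent $\alpha$ and the same $\ell^*$), not something that follows from the identity $\mu[Y\cap\{\varphi>n\}]=\sum_j\mu[Y\cap\{S_{A_j}(\varphi)>n\}]$ alone.
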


We divide the proof of Theorem \ref{Thm:intermittent} by cases $d=2$ and $d\geq3$.

\subsection{Proof of Theorem \ref{Thm:intermittent} in the case $d=2$}

If  $d=2$ then $x_1=0$ and $x_2=1$. We can take $2$-periodic points $\gamma\in J_1\setminus\{0\}$ and $f(\gamma)\in J_2\setminus\{1\}$, since $f^2:[0,1]\to[0,1]$ has four full branches and $(f^2)'(x)\geq1$.  In this case we take   
\begin{align}
	A_1=[0,\gamma),
	\quad
	Y= [\gamma, f(\gamma)],
	\quad\text{and}\quad
	A_2=(f(\gamma),1],
\end{align}
as in \cite[Section 4]{Tha02}. We normalize $\mu$ so that $\mu[Y]
=1$. 
Then Assumption \ref{asmp:dyn-sep} is satisfied. 
%We define $\varphi$, $S_j$, $r_{j,n}$, $F$, $R$, $R_{j,n},R(z), T_{n,g}$ as in Section \ref{sec:setting}. 
Recall that $L$ denotes the dual operator of $f$ with respect to $\mu$.
As in the proofs of \cite[Lemma 3]{Tha02} or \cite[Theorem 8.1]{ThZw}, there exists constants $c_1'',c_2''>0$ such that, as $n\to\infty$,
\begin{align}
  r_{1,n}
  &=\mu[Y\cap \{S_{A_1}(\varphi)=n\}]
  =
  \int_{Y}L^{n+1} 1_{Y\cap J_2\cap \{\varphi=n+1\}}\rd\mu
  \sim 
  c_1''(f_1^{-n}(1)-f_1^{-n-1}(1)),
  \\
  r_{2,n}
  &=\mu[Y\cap \{S_{A_2}(\varphi)=n\}]
  =
  \int_{Y}L^{n+1} 1_{Y\cap J_1\cap \{\varphi=n+1\}}
  \rd\mu
  \sim 
  c_2''(f_2^{-n-1}(0)-f_2^{-n}(0)).
\end{align}
By substituting $p=1/\alpha$ and $v(x)=f_1^{-1}(x)$ or $v(x)=1-f_2^{-1}(1-x)$ into \cite[Remark 1]{Zwe03}, we see that 
there exists a function $\ell(t)$ slowly varying at $\infty$ and there exist constants $\beta_1,\beta_2\in(0,1)$ with $\beta_1+\beta_2=1$ such that, as $n\to\infty$,
\begin{equation}
   r_{j,n}\sim \alpha \beta_j  n^{-1-\alpha}\ell(n),
    \quad
   \text{for $j=1,2$},
   \end{equation}  
 and hence
 \begin{equation}
 \sum_{k\geq n}r_{j,k} \sim \beta_j n^{-\alpha}\ell(n),
   \quad
   \text{for $j=1,2$}.
\end{equation}
Hence Assumption \ref{asmp:reg-var} is satisfied. (For example, in the case of Boole's transformation (Example \ref{Ex:Gen-Arc}), $\alpha=\beta_1=\beta_2=1/2$ and $\ell(t)$ can be taken as a constant function.)

Let us denote by $F$ the first return map to $Y$, which is uniformly expanding.
Define an interval $P_{j,n}\subset Y$ by
\begin{align}
    	P_{j,n}=Y\cap \Big(\bigcap_{k=1}^n f^{-n}(A_j)\Big) \cap f^{-(n+1)}(Y)=Y\cap \{S_{A_j}(\varphi)=n\},
    	\quad \text{$j=1,2$ and $n\geq1$.}
\end{align}
Then $\bigcup_{j,n} P_{j,n}=Y$, a.e., and $F(P_{j,n})=Y$, a.e.  As stated in \cite[Section 7]{Ser20}, the system $(Y, \mu|_Y, F, (P_{j,n}))$ is a mixing, probability preserving, Gibbs--Markov map in the sense of \cite{AaDe}. Set $Y'= \bigcap_{n\geq0}F^{-n}(\bigcup_{j,n} P_{j,n})$. Then $Y'=Y$, a.e.
For $x,y\in Y'$, we denote by $t(x,y)$ the separation time:
\begin{align}
t(x,y)=\sup\{m\geq 0: x,y\in \bigcap_{k=1}^{m}F^{-k+1}(P_{j_k, n_k}) \;\text{for some $j_k=1,2$ and $n_k\geq1$}\}.
\end{align}
Take $\theta\in(0,1)$ so that $\theta\leq 1/(\inf_{x\in Y'}F'(x))\in(0,1)$. For a measurable function $u:Y'\to\bC$, we define $\|u\|_{\cB}$ by
\begin{align}
    \|u\|_{\cB}=\sup|u| + \sup_{x,y\in Y'} \frac{|u(x)-u(y)|}{\theta^{t(x,y)}}.	
\end{align}
We denote by $\cB$ the space of bounded measurable functions $u:Y'\to \bC$ with $\|u\|_{\cB}<\infty$.
Note that $\cB$ contains any function $u:Y'\to \bC$ which can be extended to a Lipschitz continuous function on $Y$ with respect to the Euclidean norm. 
Indeed, the diameter of $\bigcap_{k=1}^{m}F^{-k+1}(P_{j_k, n_k})$ is $O(\theta^m)$ uniformly in the choice of $j_k, n_k$, and hence \begin{equation}
   \sup_{x,y\in Y'}\frac{|u(x)-u(y)|}{\theta^{t(x,y)}}
   \leq
   \bigg(\sup_{x,y\in Y'}\frac{|u(x)-u(y)|}{|x-y|}\bigg)	\bigg(\sup_{x,y\in Y'}\frac{|x-y|}{\theta^{t(x,y)}}\bigg)<\infty.
\end{equation}

Then we can show that the conditions \eqref{asmp:ape-ren-bdd} and \eqref{asmp:ape-ren-spec} in Assumption \ref{asmp:ape-ren} are satisfied in the same way as in \cite[Section 5]{Sar02}. For the proof of the condition \eqref{asmp:ape-ren-ape} in Assumption \ref{asmp:ape-ren}, we need to show the following lemma. Let $\bS^1=\{z\in \bC: |z|=1\}$.

\begin{Lem}[$F$-aperiodicity]
\label{Lem:TY-ap}
   Assume there exist $(t_1,t_2)\in(-\pi,\pi]^2$, $\lambda_0\in \bS^1$, a measurable function $g:Y'\to \bS^1$ such that
 \begin{equation}
    \exp
    \bigg(
    i\sum_{j=1,2}t_j S_{A_j}(\varphi)
    \bigg)
    =
    \frac{\lambda_0 g}{g\circ F},
    \quad
    \text{on $Y'$.}	
 \end{equation}
Then $t_1=t_2=0$.	
\end{Lem}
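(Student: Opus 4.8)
The plan is to recognise the hypothesis as saying that $g$ is a unimodular measurable eigenfunction of a twisted transfer operator over the Gibbs--Markov first-return map $F$, to upgrade $g$ to an element of $\cB$ by the standard regularity theory, and then to read off $t_1=t_2=0$ from periodic orbits.

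First I would record that $\psi:=\exp\!\big(i\sum_{j=1,2}t_jS_{A_j}(\varphi)\big)$ is locally constant for the Markov partition $(P_{j,n})$: by \eqref{eq:varphi_j} one has $S_{A_j}(\varphi)\equiv n$ on $P_{j,n}$ and $S_{A_1}(\varphi)=S_{A_2}(\varphi)=0$ on $P_{0,0}$, so $\psi\equiv e^{it_j n}$ on $P_{j,n}$ and $\psi\equiv 1$ on $P_{0,0}$; in particular $\psi\in\cB$. Applying the transfer operator $R$ of $F$ to the identity $\psi g=\lambda_0\,(g\circ F)$ and using $R(u\circ F)=u\cdot R1=u$ (recall $R1=1$ since $\mu|_Y$ is $F$-invariant of total mass $1$), one gets $R(\psi g)=\lambda_0 g$. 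Thus $g$ is an eigenfunction, for the eigenvalue $\lambda_0$ of modulus one, of the bounded operator $\mathcal L_\psi u:=R(\psi u)$ on $\cB$.

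Next I would invoke the regularity theory for mixing Gibbs--Markov maps in the form used in \cite{AaDe, Sar02, Gou11, MelTer}: since $\mathcal L_\psi$ satisfies a Doeblin--Fortet (Lasota--Yorke) inequality on $\cB$ and is dominated by $R$ (so that $|\mathcal L_\psi^k u|\le R^k|u|$ pointwise, which forces $|\lambda_0|=1$), a standard bootstrapping argument shows that the measurable solution $g$ admits a version lying in $\cB$; in particular this version is Lipschitz, hence continuous, on each $\overline{P_{j,n}}$. Moreover $|g|$ is then $R$-invariant, so by ergodicity of $F$ it is constant, and after renormalising $|g|\equiv 1$; the coboundary identity $\psi g=\lambda_0\,(g\circ F)$ then holds \emph{everywhere} on $Y'=\bigcup_{j,n}P_{j,n}$.

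Finally I would use periodic points. For each symbol $(j,n)$ with $r_{j,n}=\mu(P_{j,n})>0$ the branch $F|_{P_{j,n}}\colon P_{j,n}\to Y$ is an onto contraction in the metric $\theta^{t(\cdot,\cdot)}$, hence has a unique fixed point $p_{j,n}\in P_{j,n}$; evaluating $\psi g=\lambda_0\,(g\circ F)$ at $p_{j,n}$ gives $e^{it_j n}=\psi(p_{j,n})=\lambda_0$. Since it was already established (in the proof of Theorem~\ref{Thm:intermittent}) that $r_{j,n}\sim \alpha\beta_j n^{-1-\alpha}\ell(n)$ with $\beta_j\in(0,1)$, we have $r_{j,n}>0$ for all large $n$; taking two consecutive such values $n_0<n_0+1$ yields $e^{it_j n_0}=e^{it_j (n_0+1)}=\lambda_0$, whence $e^{it_j}=1$ and therefore $t_j=0$ because $t_j\in(-\pi,\pi]$. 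Applying this for $j=1$ and $j=2$ finishes the proof. The only genuinely technical step is the regularity upgrade of the third paragraph, i.e.\ passing from a merely measurable $g$ to one in $\cB$; everything else is soft. An alternative to that step, avoiding any black box, is a direct Lebesgue-density-point argument based on the iterated relations $g\circ h_w=\lambda_0^{-|w|}\psi_w\,g$ along the inverse branches $h_w$ of $F$ together with the uniform bounded distortion of $F$.
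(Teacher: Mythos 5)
Your argument is correct, and it reaches the same endgame as the paper (comparing the values $e^{it_jn}=\lambda_0$ over consecutive $n$ with $r_{j,n}>0$ to get $e^{it_j}=1$, hence $t_j=0$ since $t_j\in(-\pi,\pi]$), but the middle of the proof runs along a genuinely different track. The paper invokes \cite[Corollary 3.2]{AaDe} directly: because every branch of the Gibbs--Markov map $F$ is full and the cocycle $\sum_j t_jS_{A_j}(\varphi)$ is constant on each cylinder $P_{j,n}$, that corollary forces the transfer function $g$ to be \emph{constant} on $Y'$, so the cohomological identity collapses to $\exp(i\sum_j t_jS_{A_j}(\varphi))\equiv\lambda_0$ and one finishes by noting that the differences $S_{A_j}(\varphi)(x)-S_{A_j}(\varphi)(y)$ over $\bigcup_nP_{j,n}$ exhaust $\bZ$. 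You instead only upgrade $g$ to a version in $\cB$ (continuity on cylinders) and then make $g$ cancel by evaluating the identity at the fixed point of each full branch $F|_{P_{j,n}}$ (more precisely, of its contracting inverse branch). Your route needs less from the rigidity theory --- regularity rather than constancy of $g$ --- at the cost of the periodic-point computation; it is also somewhat more robust, in that it would survive in settings without full branches (compare Lemma \ref{Lem:F-ap-3}, where $g$ is only constant on each image set $Y_{j,\sigma}$), provided enough cylinders carry periodic points. The one step you leave as a black box, the measurable-to-$\cB$ upgrade, is exactly the point where the paper leans on \cite{AaDe}, so the two proofs are comparable in rigor; your density-point alternative via inverse branches and bounded distortion is a legitimate way to make it self-contained. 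Two cosmetic points: the set you call $P_{0,0}$ does not occur in the partition of Section \ref{sec:intermittent} for $d=2$ (the set $Y\cap f^{-1}(Y)$ is null there, and the paper indexes cylinders by $j\in\{1,2\}$, $n\geq1$), and the branch $F|_{P_{j,n}}$ is expanding --- it is its inverse that is the contraction whose fixed point you use; neither affects the argument.
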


\begin{proof}
We imitate the argument of \cite[p.651]{Sar02}. 
By \cite[Corollary 3.2]{AaDe}, the function $g$ must be constant on $Y'$, i.e.,
\begin{equation}
	\exp
    \bigg(
    i\sum_{j=1,2}t_j S_{A_j}(\varphi)
    \bigg)
    =
    \lambda_0.
\end{equation}
Therefore, for $j=1,2$,
\begin{equation}
	\exp
    \bigg(
    i t_j \bigg((S_{A_j}(\varphi))(x)-(S_{A_j}(\varphi))(y)\bigg)
    \bigg)
    =
    1,
    \quad
    x,y\in \bigcup_{n\geq1}P_{j,n}\cap Y'.
\end{equation}
Since
\begin{equation}
   \bigg\{(S_{A_j}(\varphi))(x)-(S_{A_j}(\varphi))(y) :
   x,y\in \bigcup_{n\geq1}P_{j,n}\cap Y'\bigg\}
   =\bZ,	
\end{equation}
we conclude $t_j=0$.	
\end{proof}

By Lemma \ref{Lem:TY-ap} and \cite[Proposition 3.7]{AaDe}, we see that the condition \eqref{asmp:ape-ren-ape} in Assumption \ref{asmp:ape-ren} is also satisfied. See also \cite[p.651]{Sar02}.
Therefore we can apply Theorems \ref{Thm:conv-of-density} and \ref{Thm:extend-observable} to this setting. 
%Moreover, we can prove the convergence \eqref{Cor:conditional-1} for $\nu(\rd x)=u(x)\mu(\rd x)$ where $u:[0,1]\to [0,\infty)$ denotes a Riemann-integrable function and for Borel subset $A\subset(0,1)$ with $\bar{A}\subset(0,1)$. Its proof is  the same as that of \cite[Theorem 11.8]{MelTer}.  

Let $v\in \operatorname{Lip}([0,1])$ and $u=v/h$. As stated in the proof of  \cite[Theorem 11.8]{MelTer}, $u\in\cB([0,1])$ and
\begin{equation}
	\|L^m u_{j,m}\|_{\cB}=O(r_{j,m})=O(m^{-1-\alpha}\ell(m)),
	\quad\text{as $m\to\infty$ ($j=1,\dots,d$),}
\end{equation}
and hence the condition \eqref{Thm:extend-observable-1} is satisfied.

Let $v$ be  Riemann integrable on $[0,1]$ and $u=v/h$. For any $\varepsilon>0$, we can take $v^{\pm}\in \operatorname{Lip}([0,1])$ so that
\begin{equation}
    v^-\leq v \leq v^+
    \quad\text{and}\quad
    \int_0^1(v^+(x) - v^-(x))\rd x\leq\varepsilon.	
\end{equation}
By applying Theorem \ref{Thm:extend-observable} to $v^{\pm}/h$ and the positivity of $L$, we see that
\begin{equation}
\begin{split}
	&\limsup_{n\to\infty}\|\text{(L.H.S.\ of \eqref{Thm:intermittent-1})}-\text{(R.H.S.\ of \eqref{Thm:intermittent-1})} \|_{L^\infty(Y)}
	\\
	&\leq \sup|g|\int_0^1(v^+(x) - v^-(x))\rd x
	\leq \sup|g|\varepsilon.	
\end{split}
\end{equation}
Since $\varepsilon>0$ was arbitrary, we obtain \eqref{Thm:intermittent-1}. We now complete the proof of Theorem \ref{Thm:intermittent} in the case $d=2$.

\subsection{Proof of Theorem \ref{Thm:intermittent} in the case $d\geq3$}

In the case $d\geq3$, let us take
\begin{align}\label{choice-of-partition}
	A_j= J_j\cap f^{-1}(J_j), \;\; \text{for $j=1,\dots,d$},\quad
	\text{and}\quad
	Y=[0,1]\setminus \bigcup_{j=1}^d A_j. 
\end{align}
We normalize $\mu$ so that $\mu[Y]=1$. Similarly as in the case $d=2$, we see that Assumptions \ref{asmp:dyn-sep} and \ref{asmp:reg-var} are satisfied.

Denote by $F$ the first return map to $Y$, which is uniformly expanding. Let us construct a Markov partition of $F$.
Set
\begin{equation}
Y_{j,-}= Y\cap J_j\cap(0,x_j), \quad Y_{j,+}=Y\cap J_j\cap (x_j,1).
\end{equation}
 For $j,k=1,\dots,d$ and $\sigma=\pm$ and $n\geq0$, set
\begin{equation}
P_{j,k,\sigma, n}= Y\cap J_j\cap \{S_{A_k}(\varphi)=n\} \cap F^{-1}(Y_{j,\sigma}).
\end{equation}
We see $Y_{j,k,\sigma,n}\neq \emptyset$ if and only if $j\neq k$ and $(k,\sigma)\neq (1,-), (d,+)$. 
Then
\begin{equation}
	F(P_{j, k,\sigma,n})=Y_{k,\sigma}, \quad
	\text{a.e.,
	\;if $P_{j,k,\sigma,n}\neq\emptyset$,}
\end{equation} 
and 
\begin{equation}
  Y_{j,-}=\bigcup_{\substack{k,\sigma,n\\k<j}}P_{j,k,\sigma,n},
  \quad
  Y_{j,+}=\bigcup_{\substack{k,\sigma,n\\k>j}} P_{j,k,\sigma,n}.	
\end{equation}
The system $(Y, \cB(Y), \mu|_Y, F, (Y_{i,j,\sigma,n}))$ is a mixing, probability preserving, Gibbs--Markov map as stated in \cite[Section 7]{Ser20}.
Similarly as in the case $d=2$, we need to show the following lemma.

\begin{Lem}[$F$-aperiodicity]
\label{Lem:F-ap-3}
   Assume there exist $(t_1,\dots,t_d)\in(-\pi,\pi]^d$, $\lambda_0\in \bS^1$, a measurable function $g:Y'\to \bS^1$ such that
 \begin{equation}
    \exp
    \bigg(
    i\sum_{j=1}^d
    t_j S_{A_j}(\varphi)
    \bigg)
    =
    \frac{\lambda_0 g}{g\circ F},
    \quad
    \text{on $Y'$.}	
 \end{equation}
Then $t_1=\dots=t_d=0$.	
\end{Lem}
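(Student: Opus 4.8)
The plan is to adapt, step by step, the proof of Lemma~\ref{Lem:TY-ap} (the case $d=2$) to the finer Markov partition $(P_{j,k,\sigma,n})$ constructed above. First, since $F$ is a mixing Gibbs--Markov map for the partition $(P_{j,k,\sigma,n})$, I would apply \cite[Corollary~3.2]{AaDe}, exactly as in the $d=2$ case, to deduce that the measurable solution $g$ of the cohomological equation is constant on $Y'$. The hypothesis then reduces to the identity $\exp\bigl(i\sum_{j=1}^d t_j S_{A_j}(\varphi)\bigr)=\lambda_0$ a.e.\ on $Y'$.

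Second, I would use the dynamical separation (Assumption~\ref{asmp:dyn-sep}): an excursion $f(x),\dots,f^{\varphi(x)-1}(x)$ away from $Y$ remains inside a single one of the sets $A_1,\dots,A_d$, since a transition from $A_l$ to $A_m$ with $l\neq m$ must pass through $Y$. Fixing $k\in\{1,\dots,d\}$ and setting $Z_k=\{x\in Y': f(x)\in A_k\}=\bigcup_{j\neq k,\ \sigma,\ n\geq1}\bigl(P_{j,k,\sigma,n}\cap Y'\bigr)$, we therefore have $S_{A_l}(\varphi)=0$ on $Z_k$ for every $l\neq k$, so that $\exp\bigl(it_k S_{A_k}(\varphi)\bigr)=\lambda_0$ on $Z_k$, and hence $\exp\bigl(it_k[S_{A_k}(\varphi)(x)-S_{A_k}(\varphi)(y)]\bigr)=1$ for all $x,y\in Z_k$.

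Third---the only point genuinely beyond the $d=2$ argument---I would show that $S_{A_k}(\varphi)$ takes two consecutive values on $Z_k$ (in fact every value in $\bN$). Admissibility of the label $(j,\sigma)$ is free here: the excluded pairs $(1,-),(d,+)$ only discard the empty sets $Y_{1,-}$ and $Y_{d,+}$ (recall $x_1=0$, $x_d=1$). For the values: for $x\in J_j\cap Y$ with $j\neq k$ and $f(x)\in A_k$ one has $S_{A_k}(\varphi)(x)=\min\{m\geq1: f^m(f(x))\notin A_k\}$, the escape time of $f(x)$ from $A_k$; this escape-time function on $A_k$ is unbounded near the (repelling) indifferent fixed point $x_k$, equals $1$ near $\partial A_k$, and is monotone on each side of $x_k$, hence realizes all of $\bN$; moreover $f(J_j\cap Y)=[0,1]\setminus J_j\supset J_k$ for $j\neq k$, so each escape-time level set pulls back to a non-null subinterval of $J_j\cap Y$. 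Consequently $\{S_{A_k}(\varphi)(x)-S_{A_k}(\varphi)(y): x,y\in Z_k\}=\bZ$, whence $\exp(it_k)=1$, and since $t_k\in(-\pi,\pi]$ we conclude $t_k=0$. As $k$ was arbitrary, $t_1=\dots=t_d=0$.

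I expect the main obstacle to be the third step, specifically the assertion that the escape time from $A_k$ realizes all of $\bN$; this should come down to the standard monotonicity and continuity properties of Thaler's maps near an indifferent fixed point \cite{Tha80,Tha83} (the analogous fact is used implicitly already in the $d=2$ case), so the work there is bookkeeping rather than anything substantial. Everything else is a direct transcription of the proof of Lemma~\ref{Lem:TY-ap}.
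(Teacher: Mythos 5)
There is a genuine gap, and it sits exactly at the point where the $d\geq3$ case differs from $d=2$, namely your first step. For $d\geq3$ the Gibbs--Markov map $F$ with partition $(P_{j,k,\sigma,n})$ is \emph{not} full-branch: each $P_{j,k,\sigma,n}$ is mapped onto $Y_{k,\sigma}$, a proper subset of $Y$, whereas in the $d=2$ case each $P_{j,n}$ is mapped onto all of $Y$. The cohomological rigidity result you want to quote ``exactly as in the $d=2$ case'' (\cite[Corollary 3.2]{AaDe}) yields global constancy of $g$ only in that full-branch situation; in the Markov situation the available conclusion (the paper uses \cite[Theorem 3.1]{AaDe}) is only that $g$ is a.e.\ constant on each image set $Y_{j,\sigma}\cap Y'$, with possibly different constants $c_{j,\sigma}$ on different pieces. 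Consequently your reduction to $\exp\bigl(i\sum_j t_j S_{A_j}(\varphi)\bigr)=\lambda_0$ a.e.\ on $Y'$ is unjustified, and your pooled set $Z_k=\bigcup_{j\neq k,\sigma,n}P_{j,k,\sigma,n}\cap Y'$ is too large: on it the right-hand side $\lambda_0\, g/(g\circ F)$ takes the values $\lambda_0\, c_{j,\sigma'}/c_{k,\sigma}$, which vary with $(j,\sigma)$, so the identity $\exp\bigl(it_k S_{A_k}(\varphi)\bigr)=\lambda_0$ on $Z_k$, from which you take differences, does not follow.

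The repair is short and is what the paper does: fix $j\neq k$ (and a sign $\sigma$), so that on $\bigcup_{n}P_{j,k,\sigma,n}\cap Y'$ both $g$ and $g\circ F$ are constant, hence $\exp\bigl(it_k\bigl[S_{A_k}(\varphi)(x)-S_{A_k}(\varphi)(y)\bigr]\bigr)=1$ for $x,y$ in that fixed block; since $S_{A_k}(\varphi)$ already takes all (in particular, consecutive) integer values on this single block --- your escape-time discussion, which is fine and indeed only bookkeeping, applies verbatim because $f(J_j\cap Y)\supset A_k$ --- one gets $e^{it_k}=1$ and so $t_k=0$. So your steps two and three survive; but as written the argument breaks at step one, and your own assessment that the only delicate point is step three misses where the actual subtlety lies.
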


\begin{proof}
	By \cite[Theorem 3.1]{AaDe}, the function $g$ must be constant on each $Y_{j,\sigma}\cap Y'$, which implies $g/(g\circ F)$ must be constant on each $\bigcup_{\sigma, n}P_{j,k,\sigma,n}\cap Y'$ for $j\neq k$. 
Hence, for $j\neq k$,
\begin{equation}
  \exp
    \bigg(
    i
    t_k \bigg((S_{A_k}(\varphi))(x)
    -
    (S_{A_k}(\varphi))(y)
    \bigg)
    \bigg)
    =1,
    \quad
    x,y
    \in 
    \bigcup_{\sigma, n}P_{j,k,\sigma,n}\cap Y'.	
\end{equation}
Since
\begin{equation}
   \bigg\{(S_{A_j}(\varphi))(x)-(S_{A_j}(\varphi))(y) :
   x,y\in \bigcup_{\sigma, n}P_{j,k,\sigma,n}\cap Y'\bigg\}
   =\bZ,	
\end{equation}
we conclude $t_k=0$ ($k=1,\dots,d$).	
\end{proof}

 In a similar way as in the case $d=2$, we can construct a suitable Banach space $(\cB, \|\cdot\|_{\cB})$ so that Assumption \ref{asmp:ape-ren} will be satisfied. 
Therefore we can apply Theorems \ref{Thm:conv-of-density} and \ref{Thm:extend-observable} to this setting. The other part of the proof is the same as  in the case $d=2$, so we omit it.

\subsection*{Acknowledgements}

The research of Toru Sera was partially supported by JSPS KAKENHI Grant Numbers JP19J11798 and JP21J00015, and by the Research Institute for Mathematical Sciences, an International Joint Usage/Research Center located in Kyoto University.

%%%%% references %%%%%

%\bibliographystyle{alpha}


\begin{thebibliography}{NNTY23}

\bibitem[Aar81]{Aa81}
Jon Aaronson.
\newblock The asymptotic distributional behaviour of transformations preserving
  infinite measures.
\newblock {\em J. Analyse Math.}, 39:203--234, 1981.

\bibitem[Aar86]{Aa86}
Jon Aaronson.
\newblock Random {$f$}-expansions.
\newblock {\em Ann. Probab.}, 14(3):1037--1057, 1986.

\bibitem[Aar97]{Aa97}
Jon Aaronson.
\newblock {\em An introduction to infinite ergodic theory}, volume~50 of {\em
  Mathematical Surveys and Monographs}.
\newblock American Mathematical Society, Providence, RI, 1997.

\bibitem[AD01]{AaDe}
Jon Aaronson and Manfred Denker.
\newblock Local limit theorems for partial sums of stationary sequences
  generated by {G}ibbs-{M}arkov maps.
\newblock {\em Stoch. Dyn.}, 1(2):193--237, 2001.

\bibitem[AS]{AarSer23+}
Jon Aaronson and Toru Sera.
\newblock Functional limits for ``tied down'' occupation time processes of
  infinite ergodic transformations.
\newblock {\em {\rm available at arXiv:2104.12006}}.

\bibitem[AS22]{AarSer22}
Jon Aaronson and Toru Sera.
\newblock Tied-down occupation times of infinite ergodic transformations.
\newblock {\em Israel J. Math.}, 251(1):3--47, 2022.

\bibitem[ATZ05]{ATZ}
Jon Aaronson, Maximilian Thaler, and Roland Zweim\"{u}ller.
\newblock Occupation times of sets of infinite measure for ergodic
  transformations.
\newblock {\em Ergodic Theory Dynam. Systems}, 25(4):959--976, 2005.

\bibitem[AW73]{AdWe}
Roy~L. Adler and Benjamin Weiss.
\newblock The ergodic infinite measure preserving transformation of {B}oole.
\newblock {\em Israel J. Math.}, 16:263--278, 1973.

\bibitem[BGT87]{BGT}
N.~H. Bingham, C.~M. Goldie, and J.~L. Teugels.
\newblock {\em Regular variation}, volume~27 of {\em Encyclopedia of
  Mathematics and its Applications}.
\newblock Cambridge University Press, Cambridge, 1987.

\bibitem[Boo57]{Boole}
George Boole.
\newblock On the comparison of transcendents, with certain applications to the
  theory of definite integrals.
\newblock {\em Phil. Trans. R. Soc.}, 147:745--803, 1857.

\bibitem[BPY89]{BPY}
Martin Barlow, Jim Pitman, and Marc Yor.
\newblock Une extension multidimensionnelle de la loi de l'arc sinus.
\newblock In {\em S\'{e}minaire de {P}robabilit\'{e}s, {XXIII}}, volume 1372 of
  {\em Lecture Notes in Math.}, pages 294--314. Springer, Berlin, 1989.

\bibitem[CF49]{ChuFel}
Kai~Lai Chung and W.~Feller.
\newblock On fluctuations in coin-tossing.
\newblock {\em Proc. Nat. Acad. Sci. U.S.A.}, 35:605--608, 1949.

\bibitem[DK57]{DK}
D.~A. Darling and M.~Kac.
\newblock On occupation times for {M}arkoff processes.
\newblock {\em Trans. Amer. Math. Soc.}, 84:444--458, 1957.

\bibitem[EK47]{ErdKac}
P.~Erd\"{o}s and M.~Kac.
\newblock On the number of positive sums of independent random variables.
\newblock {\em Bull. Amer. Math. Soc.}, 53:1011--1020, 1947.

\bibitem[Fel49]{Fel49}
William Feller.
\newblock Fluctuation theory of recurrent events.
\newblock {\em Trans. Amer. Math. Soc.}, 67:98--119, 1949.

\bibitem[Fel71]{Fel71}
W.~Feller.
\newblock {\em An introduction to probability theory and its applications.
  {V}ol. {II}}.
\newblock Second edition. John Wiley \& Sons, Inc., New York-London-Sydney,
  1971.

\bibitem[FG95]{FitGet95}
P.~J. Fitzsimmons and R.~K. Getoor.
\newblock Occupation time distributions for {L}\'{e}vy bridges and excursions.
\newblock {\em Stochastic Process. Appl.}, 58(1):73--89, 1995.

\bibitem[FKY07]{FKY}
Etsuko Fujihara, Yumi Kawamura, and Yuko Yano.
\newblock Functional limit theorems for occupation times of {L}amperti's
  stochastic processes in discrete time.
\newblock {\em J. Math. Kyoto Univ.}, 47(2):429--440, 2007.

\bibitem[Gou11]{Gou11}
S\'{e}bastien Gou\"{e}zel.
\newblock Correlation asymptotics from large deviations in dynamical systems
  with infinite measure.
\newblock {\em Colloq. Math.}, 125(2):193--212, 2011.

\bibitem[GS94]{GetSha94}
R.~K. Getoor and M.~J. Sharpe.
\newblock On the arc-sine laws for {L}\'{e}vy processes.
\newblock {\em J. Appl. Probab.}, 31(1):76--89, 1994.

\bibitem[HY23]{HaYa}
Genji Hata and Kouji Yano.
\newblock Arcsine and {D}arling-{K}ac laws for piecewise linear random interval
  maps.
\newblock {\em Stoch. Dyn.}, 23(1):Paper No. 2350006, 24, 2023.

\bibitem[IL71]{IbrLin}
I.~A. Ibragimov and Yu.~V. Linnik.
\newblock {\em Independent and stationary sequences of random variables}.
\newblock Wolters-Noordhoff Publishing, Groningen, 1971.
\newblock With a supplementary chapter by I. A. Ibragimov and V. V. Petrov,
  Translation from the Russian edited by J. F. C. Kingman.

\bibitem[Jam10]{Jam10}
Lancelot~F. James.
\newblock Lamperti-type laws.
\newblock {\em Ann. Appl. Probab.}, 20(4):1303--1340, 2010.

\bibitem[JLP08]{JLP}
Lancelot~F. James, Antonio Lijoi, and Igor Pr\"{u}nster.
\newblock Distributions of linear functionals of two parameter
  {P}oisson-{D}irichlet random measures.
\newblock {\em Ann. Appl. Probab.}, 18(2):521--551, 2008.

\bibitem[Kal02]{Kal02}
Olav Kallenberg.
\newblock {\em Foundations of modern probability}.
\newblock Probability and its Applications (New York). Springer-Verlag, New
  York, second edition, 2002.

\bibitem[Kas76]{Kas}
Y.~Kasahara.
\newblock Spectral theory of generalized second order differential operators
  and its applications to {M}arkov processes.
\newblock {\em Japan. J. Math. (N.S.)}, 1(1):67--84, 1975/76.

\bibitem[Kat95]{Kat}
Tosio Kato.
\newblock {\em Perturbation theory for linear operators}.
\newblock Classics in Mathematics. Springer-Verlag, Berlin, 1995.
\newblock Reprint of the 1980 edition.

\bibitem[Klo19]{Klo}
Beno\^{\i}t~R. Kloeckner.
\newblock Effective perturbation theory for simple isolated eigenvalues of
  linear operators.
\newblock {\em J. Operator Theory}, 81(1):175--194, 2019.

\bibitem[Kni96]{Kni96}
F.~B. Knight.
\newblock The uniform law for exchangeable and {L}\'{e}vy process bridges.
\newblock Number 236, pages 171--188. 1996.
\newblock Hommage \`a P. A. Meyer et J. Neveu.

\bibitem[Lam58]{Lam58}
John Lamperti.
\newblock An occupation time theorem for a class of stochastic processes.
\newblock {\em Trans. Amer. Math. Soc.}, 88:380--387, 1958.

\bibitem[L{\'e}v39]{Levy39}
Paul L{\'e}vy.
\newblock Sur certains processus stochastiques homog\`enes.
\newblock {\em Compositio Math.}, 7:283--339, 1939.

\bibitem[Lip52]{Lip52}
Miriam Lipschutz.
\newblock Generalization of a theorem of {C}hung and {F}eller.
\newblock {\em Proc. Amer. Math. Soc.}, 3:659--670, 1952.

\bibitem[MT12]{MelTer}
Ian Melbourne and Dalia Terhesiu.
\newblock Operator renewal theory and mixing rates for dynamical systems with
  infinite measure.
\newblock {\em Invent. Math.}, 189(1):61--110, 2012.

\bibitem[NNTY23]{NNTY}
Fumihiko Nakamura, Yushi Nakano, Hisayoshi Toyokawa, and Kouji Yano.
\newblock Arcsine law for random dynamics with a core.
\newblock {\em Nonlinearity}, 36(3):1491--1509, 2023.

\bibitem[SA53]{Spa53}
Erik Sparre~Andersen.
\newblock On the fluctuations of sums of random variables.
\newblock {\em Math. Scand.}, 1:263--285, 1953.

\bibitem[SA54]{Spa54}
Erik Sparre~Andersen.
\newblock On the fluctuations of sums of random variables. {II}.
\newblock {\em Math. Scand.}, 2:195--223, 1954.

\bibitem[Sar02]{Sar02}
Omri Sarig.
\newblock Subexponential decay of correlations.
\newblock {\em Invent. Math.}, 150(3):629--653, 2002.

\bibitem[Sat13]{Sat}
Ken-iti Sato.
\newblock {\em L\'{e}vy processes and infinitely divisible distributions},
  volume~68 of {\em Cambridge Studies in Advanced Mathematics}.
\newblock Cambridge University Press, Cambridge, 2013.
\newblock Translated from the 1990 Japanese original, Revised edition of the
  1999 English translation.

\bibitem[Ser20]{Ser20}
Toru Sera.
\newblock Functional limit theorem for occupation time processes of
  intermittent maps.
\newblock {\em Nonlinearity}, 33(3):1183--1217, 2020.

\bibitem[Spi56]{Spi56}
Frank Spitzer.
\newblock A combinatorial lemma and its application to probability theory.
\newblock {\em Trans. Amer. Math. Soc.}, 82:323--339, 1956.

\bibitem[SY19]{SerYan19}
Toru Sera and Kouji Yano.
\newblock Multiray generalization of the arcsine laws for occupation times of
  infinite ergodic transformations.
\newblock {\em Trans. Amer. Math. Soc.}, 372(5):3191--3209, 2019.

\bibitem[Tha80]{Tha80}
Maximilian Thaler.
\newblock Estimates of the invariant densities of endomorphisms with
  indifferent fixed points.
\newblock {\em Israel J. Math.}, 37(4):303--314, 1980.

\bibitem[Tha83]{Tha83}
Maximilian Thaler.
\newblock Transformations on {$[0,\,1]$} with infinite invariant measures.
\newblock {\em Israel J. Math.}, 46(1-2):67--96, 1983.

\bibitem[Tha00]{Tha00}
Maximilian Thaler.
\newblock The asymptotics of the {P}erron-{F}robenius operator of a class of
  interval maps preserving infinite measures.
\newblock {\em Studia Math.}, 143(2):103--119, 2000.

\bibitem[Tha02]{Tha02}
Maximilian Thaler.
\newblock A limit theorem for sojourns near indifferent fixed points of
  one-dimensional maps.
\newblock {\em Ergodic Theory Dynam. Systems}, 22(4):1289--1312, 2002.

\bibitem[TZ06]{ThZw}
Maximilian Thaler and Roland Zweim\"{u}ller.
\newblock Distributional limit theorems in infinite ergodic theory.
\newblock {\em Probab. Theory Related Fields}, 135(1):15--52, 2006.

\bibitem[Wat95]{Wat95}
Shinzo Watanabe.
\newblock Generalized arc-sine laws for one-dimensional diffusion processes and
  random walks.
\newblock In {\em Stochastic analysis ({I}thaca, {NY}, 1993)}, volume~57 of
  {\em Proc. Sympos. Pure Math.}, pages 157--172. Amer. Math. Soc., Providence,
  RI, 1995.

\bibitem[Wen64]{Wen64}
J.~G. Wendel.
\newblock Zero-free intervals of semi-stable {M}arkov processes.
\newblock {\em Math. Scand.}, 14:21--34, 1964.

\bibitem[Yan06]{Ya.Y06}
Yuko Yano.
\newblock On the occupation time on the half line of pinned diffusion
  processes.
\newblock {\em Publ. Res. Inst. Math. Sci.}, 42(3):787--802, 2006.

\bibitem[Yan17]{Ya.Y17}
Yuko Yano.
\newblock On the joint law of the occupation times for a diffusion process on
  multiray.
\newblock {\em J. Theoret. Probab.}, 30(2):490--509, 2017.

\bibitem[YY08]{YaYa08}
Kouji Yano and Yuko Yano.
\newblock Remarks on the density of the law of the occupation time for {B}essel
  bridges and stable excursions.
\newblock {\em Statist. Probab. Lett.}, 78(14):2175--2180, 2008.

\bibitem[Zwe03]{Zwe03}
Roland Zweim\"{u}ller.
\newblock Stable limits for probability preserving maps with indifferent fixed
  points.
\newblock {\em Stoch. Dyn.}, 3(1):83--99, 2003.

\bibitem[Zwe07]{Zwe07cpt}
Roland Zweim\"{u}ller.
\newblock Infinite measure preserving transformations with compact first
  regeneration.
\newblock {\em J. Anal. Math.}, 103:93--131, 2007.

\end{thebibliography}
\end{document}